\def\N{\mathbb N}
\def\N0{\mathbb N_0}
\def\real{\mathbb R}
\def\complex{\mathbb C}
\def\Berg{{\cal A}^2}
\def\Hardy{{\mathcal H}}
\def\disk{\mathbb D}
\def\Dalg{\mathbb A _{\beta }}
\def\Dreg{\disk -regular}
\def\Rsym{R-symmetric}
\def\dop{\mathcal L}
\def\dopmin{{\mathcal L}_{min}}
\def\dom{\mathcal D}
\def\dommin{{\mathcal D}_{min}}
\title{\bf Analytic Differential Operators \\
on the Unit Disk
}
\author{Robert Carlson \\
Department of Mathematics \\ 
University of Colorado at Colorado Springs \\
rcarlson@uccs.edu \\
719-255-3561 \\
ORCID ID 0000-0002-6733-7922}
\newtheorem{thm}{Theorem}[section]
\newtheorem{cor}[thm]{Corollary}
\newtheorem{lem}[thm]{Lemma}
\newtheorem{prop}[thm]{Proposition}
\theoremstyle{definition}
\theoremstyle{remark}
\newcommand{\thmref}[1]{Theorem~\ref{#1}}
\newcommand{\lemref}[1]{Lemma~\ref{#1}}
\newcommand{\corref}[1]{Corollary~\ref{#1}}
\newcommand{\propref}[1]{Proposition~\ref{#1}}
 \numberwithin{equation}{section}
\begin{document}

\maketitle

\begin{abstract}
Formally symmetric differential operators on weighted Hardy-Hilbert spaces are analyzed, along with adjoint pairs of differential operators.
Eigenvalue problems for such operators are rather special, but include many of the classical Riemann and Heun equations.
Symmetric minimal operators are characterized.  
A regular class whose leading coefficients have no zeros on the unit circle are shown to be essentially self-adjoint.
Eigenvalue asymptotics are established.  Some extensions to non-self-adjoint operators are also considered.
 
\end{abstract}

\vskip 50pt

{\bf Keywords:} Analytic differential operators, weighted Hardy space, self-adjoint differential operators 

\vskip 10pt

{\bf AMS subject classification:} 47E05, 47B25, 34L05, 34M03 

\newpage

\tableofcontents

\newpage

\section{Introduction}

If 
\begin{equation} \label{dexpress}
L = \sum_{k=0}^N p_k(z)D^k, \quad D = \frac{d}{dz},
\end{equation}
is a differential expression whose coefficients $p_k(z)$ are analytic on the unit disk $\disk \subset \complex$,
it is natural to ask about the action of $L$ as an operator on a Hilbert space or Banach space of analytic functions on $\disk$.
Because self adjoint operators play such a dominant role in applications, their identification and analysis is likely to be
fundamental.  A. Villone initiated just such a project in his dissertation, written under the direction of E.A. Coddington,
and in subsequent papers \cite{Villone1} - \cite{Villone5}. 
The chosen Hilbert space was the Bergman space $\Berg $ of analytic functions square integrable with respect to area measure on $\disk$. 
This project was extended in the dissertation of W. Stork, written under the direction of J. Weidman, as reported in \cite{StorkA, StorkB}. 

Both Villone and Stork begin with a minimal operator $\dopmin$ on $\Berg$ acting by $\dopmin f = Lf$ 
on the domain $\dommin$ consisting of polynomials in $z$.  Basic questions include whether $\dopmin$ is symmetric, 
has self adjoint extensions, or is essentially self adjoint.  While the framework appears natural, the class of
symmetric operators $\dopmin$ is quite small.  The coefficients $p_k(z)$ must be polynomials of degree at most $N+k$.
First order minimal operators are symmetric if and only if
\[L = (a_2z^2 + a_1z + a_0)D + (b_1z + b_0), \]
with constants $a_j,b_j$ satisfying
\[a_0 = \overline{a_2}, \quad a_1,b_0 \in \real , \quad b_1 = 2a_2.\]

This work starts by generalizing the earlier investigations in two ways: by introducing new Hilbert spaces, and by 
extending the focus on symmetric operators to include adjoint pairs of differential operators.
The single space $\Berg $ is extended to a more general class of weighted Hardy spaces \cite{Shields} in which the 
functions $z^n$, $n=0,1,2,\dots $ are orthogonal.  The adjoint pairs of differential expressions $L,L^+$ on a weighted
Hardy space are still highly constrained; again, the coefficients $p_k(z)$ must be polynomials of degree at most $N+k$.
In turn, asking for adjoint pairs $L,L^+$ of the same order (a natural requirement when symmetry is a central concern)
constrains the admissible weights, which are essentially characterized by a single positive parameter.    
Adjoint pairs $L,L^+$ have a role in the formally symmetric constructions $L+L^+$, $L^+L$, and formally symmetric operator matrices such as
\[\begin{pmatrix} 0 & L^+ \cr L & 0  \end{pmatrix}.\]
Expressions $L^+L$ and their pertubations will appear as generators of semigroups \cite{CPart,GGG}.
By exploiting the $L + L^+$ form, symmetric minimal operators $\dopmin$ are given a straightforward characterization
that was missing in the earlier work.  

The next section introduces a class of $\Dreg$ expressions.
By focusing on symmetric differential expressions whose leading coefficients had no
zeros on the unit circle, Stork \cite{StorkB} identified a class of essentially self-adjoint minimal operators $\dopmin$. 
The self-adjoint extensions have discrete spectrum; a general description of the eigenvalue distribution was provided.   
Leaning rather heavily on the operator-theoretic methods of \cite{Kato},
new techniques are developed below to extend Stork's result to symmetric $\Dreg$ expressions on the weighted Hardy spaces, 
with related results for non-self-adjoint operators.   A simple formula for the Fredholm index is established.

Continuing the perturbation analysis, the last section provides significantly improved eigenvalue estimates for self-adjoint $\Dreg $ operators and their
non-self-adjoint perturbations by lower order terms.  Initial steps involve restricting the expressions to the unit
circle, applying conventional reductions, and studying the eigenvalue problem for an extended operator in a conventional Hilbert space of
$2\pi$-periodic functions on $\real $.  Once the eigenvalues are constrained, final results  are developed 
with an operator deformation argument in the weighted Hardy spaces.

\section{Adjoint pairs in weighted Hardy spaces}
 
\subsection{Weighted Hardy spaces}
 
A sequence $\beta = \{ \beta _n, n = 0,1,2,\dots \} $ of positive numbers can be used to define an inner product
on the vector space $\dommin$ of polynomials $f:\complex \to \complex $.
For polynomials $f_1 = \sum_{n=0}^{\infty } a_nz^n$ and $f_2 = \sum_{n=0}^{\infty } c_nz^n$ written as power series
whose coefficients are eventually zero, an inner product and corresponding norm are given by 
\[\langle f_1,f_2 \rangle _{\beta} = \sum_{n=0}^{\infty} a_n \overline{c_n} \beta _n^2, \quad \| f_1 \| _{\beta }^2 = \sum_{n=0}^{\infty } |a_n|^2 \beta _n^2.\]
The completion of this inner product space is the Hilbert space $\Hardy _{\beta }$,
consisting of the formal power series $g = \sum_{n=0}^{\infty} a_nz^n$ such that
\[\| g \| _{\beta }^2 = \sum_{n=0}^{\infty} |a_n|^2 \beta _n^2 < \infty .\]
$\Hardy _{\beta }$ has an orthonormal basis given by 
\begin{equation} \label{orthobasis} 
e_n = z^n/\beta _n, \quad n = 0,1,2,\dots .
\end{equation}
Standard examples of weighted spaces $\Hardy _{\beta }$ include the classical Hardy space $\Hardy ^2$, 
where, if $z = |z|e^{i\theta }$, the inner product is  
 \begin{equation} \label{Hardyip}
\langle f,g \rangle _{\Hardy ^2} = \frac{1}{2\pi} \int_0^{2\pi} f(e^{i\theta }) \overline{g(e^{i\theta }) } \ d \theta , \quad \beta _n = 1,
\end{equation}
and the Bergman space $\Berg $, with 
\begin{equation} \label{Bergip}
\langle f,g \rangle _{\Berg} = \int_{\disk } f(z)\overline{g(z)} \ dx \ dy, \quad \beta _n =  \sqrt{\pi / (n+1)} .
\end{equation}

If multiplication by $z$, and so polynomials in $z$, act as bounded operators on $\Hardy _{\beta }$, 
then differential expressions $L$ with coefficients in $\Hardy _\beta $ will satisfy $Lf \in \Hardy _{\beta}$ for all $f \in \dommin $.  
That is, $\dopmin$ will be densely defined. The needed weight condition is easily characterized.

\begin{prop} \label{bndop}
The operator $M_zf = zf$ is bounded on $\Hardy _{\beta }$ if and only if the sequence $\{ \beta _{n+1}/\beta _n \}$ is bounded, in which case
$\| M_z \| \le  \sup _n \beta _{n+1}/\beta _n $.
\end{prop}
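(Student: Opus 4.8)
The plan is to recognize $M_z$ as a weighted shift with respect to the orthonormal basis $\{e_n\}$ and to reduce the whole statement to a single direct norm computation. First I would record the action of $M_z$ on the basis: since $e_n = z^n/\beta _n$, one has
\[
M_z e_n = z \cdot \frac{z^n}{\beta _n} = \frac{\beta _{n+1}}{\beta _n}\cdot \frac{z^{n+1}}{\beta _{n+1}} = \frac{\beta _{n+1}}{\beta _n}\, e_{n+1}.
\]
Thus $M_z$ carries $e_n$ to a scalar multiple of the next basis vector, and the images of distinct basis vectors stay orthogonal. Writing $r_n = \beta _{n+1}/\beta _n$, this is the model weighted shift with weights $r_n$.

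Next I would compute the norm on a general element of the dense domain. For a polynomial $f = \sum_n a_n z^n \in \dommin$, expanding $M_zf = \sum_n a_n z^{n+1}$ and using orthogonality of the monomials gives
\[
\| M_z f \| _{\beta }^2 = \sum_n |a_n|^2 \beta _{n+1}^2 = \sum_n |a_n|^2 r_n^2 \beta _n^2, \qquad \| f \| _{\beta }^2 = \sum_n |a_n|^2 \beta _n^2 .
\]
For the sufficiency direction, if $\{ r_n \}$ is bounded with $R = \sup_n r_n < \infty$, then the termwise estimate $r_n^2 \le R^2$ yields $\| M_z f \| _{\beta }^2 \le R^2 \| f \| _{\beta }^2$ for every $f \in \dommin$; since $\dommin$ is dense in $\Hardy _{\beta }$, $M_z$ extends to a bounded operator with $\| M_z \| \le R = \sup_n \beta _{n+1}/\beta _n$. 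For necessity, I would simply test against the unit vectors $e_n$: the basis formula gives $\| M_z e_n \| _{\beta } = r_n$, so $\| M_z \| \ge r_n$ for every $n$, and hence $\| M_z \| \ge \sup_n r_n$. If $\{ r_n \}$ is unbounded no finite bound can hold, so $M_z$ is not bounded.

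There is no deep obstacle here; the computation is the proof. The only points needing a little care are confirming that $M_z$ is densely defined so that a bound on $\dommin$ genuinely produces a bounded extension, and noting that the rearrangement of terms in the norm identity is legitimate because elements of $\dommin$ have only finitely many nonzero coefficients. The two estimates together in fact pin down $\| M_z \| = \sup_n \beta _{n+1}/\beta _n$, although only the upper bound is asserted in the statement.
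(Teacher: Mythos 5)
Your proposal is correct and follows essentially the same route as the paper: compute $\| M_z f\|_\beta^2 = \sum_n |a_n|^2 r_n^2 \beta_n^2$ on the dense set of polynomials to get the upper bound when $\sup_n r_n<\infty$, and test against the monomials $z^n$ (equivalently $e_n$) for necessity. The weighted-shift framing and the explicit remark about extending from the dense domain are minor elaborations of the paper's argument, not a different method.
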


\begin{proof}
If $f(z) = \sum_{n=0}^{\infty} a_nz^n$, then 
\[\| zf \| _{\beta }^2 = \| \sum_{n=0}^{\infty} a_{n}z^{n+1} \| _{\beta } ^2 = \sum_{n=0}^{\infty} |a_{n} |^2 \beta _{n}^2 \frac{\beta _{n+1}^2}{\beta _{n}^2}. \]
If 
\begin{equation} \label{multbnd}
C_{\beta } = \sup _n\beta _{n+1}/\beta _n < \infty,
\end{equation}
then $\| zf \| _{\beta } \le C_{\beta} \| f \| _{\beta } $.
If $\beta _{n+1}/\beta _n$ is unbounded, then 
\[\| z^n \| _{\beta } = \beta _n , \quad \| z \cdot z^n \| _{\beta} = | \frac{\beta _{n+1}}{\beta _n} | |\beta _n| , \]
and $M_z$ is unbounded. 
\end{proof}

When $C _{\beta } < \infty $ a simple induction shows that $\beta _n \le \beta _0 C_{\beta }^n$, implying that
the power series of elements of $\Hardy _{\beta }$ converge to an analytic function on some disk centered at $z=0$.
The next proposition shows that the condition 
\begin{equation} \label{betabnd}
\lim_{n \to \infty}  \beta _n^{1/n} = r, \quad r > 0,
\end{equation}
implies that $\Hardy _{\beta }$ has a natural interpretation as a space of analytic functions on $\disk _r$,
the open disk of radius $r$ centered at $z=0$.

\begin{prop} \label{bndpe}
If \eqref{betabnd} holds, then every $f \in \Hardy _{\beta }$ is analytic in the open disk $\disk _r$.  For $j = 0,1,2,\dots $ and $z_1 \in \disk _r$,
the linear functional $f^{(j)}(z_1)$ given by derivative evaluation is uniformly bounded in the $\Hardy _{\beta}$ norm on compact subsets of $\disk _r$.
For any $R > r$, every function analytic in $\disk _{R} $ belongs to $\Hardy _{\beta }$,
and some functions in $\Hardy _{\beta }$ do not extend analytically to $\disk _R$.
\end{prop}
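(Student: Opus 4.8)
The plan is to derive all four claims from elementary power-series estimates, the engine in every case being the Cauchy--Hadamard formula together with the root test. The hypothesis \eqref{betabnd} that $\beta_n^{1/n} \to r$ is what forces the relevant $\limsup$'s to fall strictly on the correct side of the critical value, according to whether the radius in question is smaller or larger than $r$.

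First I would establish analyticity on $\disk_r$. For $f = \sum_n a_n z^n \in \Hardy_\beta$ the condition $\sum_n |a_n|^2 \beta_n^2 < \infty$ forces the terms to be bounded, say $|a_n|\beta_n \le M$, so that $|a_n|^{1/n} \le (M/\beta_n)^{1/n}$. Since $M^{1/n} \to 1$ and $\beta_n^{1/n} \to r$, the right side tends to $1/r$, giving $\limsup_n |a_n|^{1/n} \le 1/r$; hence the radius of convergence is at least $r$ and $f$ is analytic on $\disk_r$. For the derivative-evaluation functionals I would apply Cauchy--Schwarz in $\Hardy_\beta$. Writing $f^{(j)}(z_1) = \sum_{n \ge j} a_n \tfrac{n!}{(n-j)!} z_1^{n-j}$ and inserting the factors $\beta_n \beta_n^{-1}$, one obtains $|f^{(j)}(z_1)| \le \|f\|_\beta \, S_j(|z_1|)^{1/2}$, where $S_j(\rho) = \sum_{n \ge j} \beta_n^{-2} \bigl(\tfrac{n!}{(n-j)!}\bigr)^2 \rho^{2(n-j)}$. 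The factor $\tfrac{n!}{(n-j)!}$ grows only polynomially and so contributes nothing to the $n$-th root; taking $n$-th roots of the general term gives the limit $\rho^2/r^2$, which is $<1$ for $\rho < r$. Thus $S_j(\rho) < \infty$ whenever $\rho < r$, and since any compact $K \subset \disk_r$ lies in a closed disk of some radius $\rho < r$, the bound $\|f\|_\beta\, S_j(\rho)^{1/2}$ is uniform over $K$. This is the most technical step, though only because of the bookkeeping with the derivative factors.

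Finally I would carry out the two-sided comparison with honest analytic functions. If $g = \sum_n a_n z^n$ is analytic on $\disk_R$ with $R > r$, its radius of convergence is at least $R$, so $\limsup_n |a_n|^{1/n} \le 1/R$; consequently $\limsup_n (|a_n|^2 \beta_n^2)^{1/n} \le r^2/R^2 < 1$, and the root test yields $g \in \Hardy_\beta$. For the sharpness of this, I would exhibit an explicit element of $\Hardy_\beta$ whose radius of convergence is exactly $r$, for instance $f = \sum_n \tfrac{1}{(n+1)\beta_n} z^n$: here $\sum_n |a_n|^2 \beta_n^2 = \sum_n (n+1)^{-2} < \infty$, so $f \in \Hardy_\beta$, while $|a_n|^{1/n} = ((n+1)\beta_n)^{-1/n} \to 1/r$ pins the radius of convergence at $r$. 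Any analytic extension of $f$ to $\disk_R$ would have Taylor expansion $\sum_n a_n z^n$ at the origin with radius at least $R > r$, contradicting this computation; hence $f$ does not extend analytically to $\disk_R$, completing the proof.
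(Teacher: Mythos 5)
Your proposal is correct and follows essentially the same route as the paper: Cauchy--Schwarz plus the root test (driven by $\beta_n^{1/n}\to r$) for the derivative-evaluation bounds, a root-test comparison for the inclusion of functions analytic on $\disk_R$, and an explicit series $\sum_n z^n/((n+1)\beta_n)$ (the paper uses $\sum_n z^n/(n\beta_n)$) with radius of convergence exactly $r$ for the sharpness claim. The only cosmetic differences are that you establish analyticity from boundedness of the coefficients before the functional estimates, and that you spell out the uniqueness-of-Taylor-expansion step that the paper leaves implicit.
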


\begin{proof}
Suppose $f = \sum_{n=0}^{\infty} a_nz^n \in \Hardy _{\beta}$.  For $|z_1| < r$, the Cauchy-Schwarz inequality gives
\[|f^{(j)}(z_1)| \le \sum_{n=j}^{\infty} |a_n| n(n-1) \dots (n-j+1)|z_1^{n-j}| \]
\[ \le \Bigl ( \sum_{n=j}^{\infty} |a_n|^2\beta _n^2  \Bigr ) ^{1/2}  \Bigl ( \sum_{n=j}^{\infty} n^{2j} \frac{|z_1|^{2n-2j}}{\beta _n^2}  \Bigr ) ^{1/2} 
 \le \| f || _{\beta }  \Bigl ( \sum_{n=j}^{\infty} n^{2j}\frac{|z_1|^{2n-2j}}{\beta _n^2}  \Bigr ) ^{1/2} \]
Since $\lim_{n \to \infty } (n^{2j} |z_1|^{2n}/\beta _n^2)^{1/n} = |z_1|^2/r^2 < 1$,
the root test shows that the derivative evaluation functionals are uniformly bounded on compact subsets of $\disk _r$.
Since $ \sum_{n=0}^{\infty} a_nz^n$ converges uniformly on compact subsets of $\disk _r$, $f(z)$ is analytic on $\disk _r$.

Suppose that $R > r$ and $g(z)= \sum_{n=0}^{\infty} c_nz^n$ is analytic on $\disk _R$.  For $r < R_1 < R$ this series converges absolutely for $|z| \le R_1$,
so $|c_n| < R_1^{-n}$ for sufficiently large $n$, and $\sum |c_n|^2\beta _n^2 < \infty $ by the root test.
Finally, the function $\sum_{n=1}^{\infty} z^n/(n \beta _n)$ is in $\Hardy _{\beta }$, but does not extend analytically to $\disk _R$.

\end{proof}

\subsection{Adjoint differential operators}

Assume that the weight sequence $\beta $ satisfies \eqref{multbnd} and \eqref{betabnd}, while $p_k(z) \in \Hardy _{\beta }$ for $k=0,\dots ,N$.
A differential expression $L = \sum_{k=0}^N p_k(z)D^k$, also known as a formal differential operator,  is said to have order at most $N$.
$L$ will have order equal to $N$ if $p_N(z)$ is not the zero function.  Since $Lf \in \Hardy _{\beta }$ for every $f \in \dommin $, 
the minimal operator $\dopmin :\dommin \to \Hardy  _{\beta }$ acting by $\dopmin f = Lf$ is then a densely defined operator on $\Hardy _{\beta }$.   
$\dopmin$ has a Hilbert space adjoint operator $\dopmin ^*$, but expectations about the adjoint based on traditional
integration by parts computations may be misleading.  Typically, $\dopmin ^*$ will not be a differential operator.

The concept of an adjoint pair of operators \cite[p. 167]{Kato} is useful for this discussion.
Two differential operators $\dopmin$ and $\dopmin ^+$ are adjoint to each other if
\[\langle \dopmin  f,g \rangle _{\beta } = \langle f,\dopmin ^+g \rangle _{\beta }\]
for all $f, g \in \dommin$.  $\dopmin ^*$ will then be an extension of $\dopmin ^+$. 
The corresponding expressions $L$ and $L^+$ will be called formal adjoints.
$L$ is formally symmetric if $L = L^+$.
Being part of an adjoint pair of differential expressions on a space $\Hardy _{\beta }$ is rather restrictive.
The next result is similar to one in \cite{Villone1}, where it is assumed that $L = L^+$.

\begin{thm} \label{adjform}
Suppose $p_k(z)$ and $q_k(z)$ are in $\Hardy _{\beta }$ for $k=0,\dots ,N$, with $\beta $ satisfying \eqref{multbnd} and \eqref{betabnd}.
If $L = \sum_{k=0}^N p_k(z)D^k$ has a formal adjoint $L^+ = \sum_{k=0}^N q_k(z)D^k$,
then $L $ has polynomial coefficients, with $\deg (p_k(z)) \le N+k$.  
\end{thm}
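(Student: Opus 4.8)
The plan is to test the defining identity $\langle \dopmin f,g\rangle_\beta = \langle f,\dopmin^+ g\rangle_\beta$ on the monomials $f = z^m$, $g = z^n$ and read off linear relations among the Taylor coefficients of the $p_k$ and $q_k$. Write $p_k(z)=\sum_{j\ge 0} p_{k,j}z^j$ and $q_k(z)=\sum_{j\ge 0} q_{k,j}z^j$; these expansions exist because $p_k,q_k\in\Hardy_\beta$. Using $D^kz^m = \frac{m!}{(m-k)!}z^{m-k}$, where $\frac{m!}{(m-k)!}$ denotes the falling factorial $m(m-1)\cdots(m-k+1)$ (equal to $0$ when $m<k$), together with the orthogonality relation $\langle z^a,z^b\rangle_\beta = \beta_a^2\,\delta_{ab}$ coming from the basis \eqref{orthobasis}, one computes each side by extracting a single power-series coefficient, so that no convergence question arises. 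This yields, for all integers $m,n\ge 0$, the master identity
\[
\beta_n^2 \sum_{k=0}^N \frac{m!}{(m-k)!}\, p_{k,\,n-m+k} \;=\; \beta_m^2 \sum_{k=0}^N \frac{n!}{(n-k)!}\, \overline{q_{k,\,m-n+k}},
\]
with the convention that any coefficient carrying a negative second index is zero.

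The decisive observation is that the high super-diagonal equations decouple entirely from the $q$'s. Fix the offset $d = n-m$ and suppose $d \ge N+1$. On the right-hand side the relevant $q$-index is $m-n+k = k-d \le N-(N+1) < 0$ for every $k\le N$, so every term there vanishes and the whole right side is $0$. Since $\beta_n^2 > 0$, the master identity collapses to
\[
\sum_{k=0}^N \frac{m!}{(m-k)!}\, p_{k,\,d+k} = 0 \qquad \text{for every } m\ge 0 .
\]
Note that only positivity of the weights is used here; the growth hypotheses \eqref{multbnd} and \eqref{betabnd} serve merely to guarantee finiteness of the inner products above, which is already subsumed in the formal-adjoint hypothesis.

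Finally I would exploit the polynomial dependence on $m$. The falling factorials $\frac{m!}{(m-k)!}$, $k=0,\dots,N$, are polynomials in $m$ of exact degrees $0,1,\dots,N$, hence linearly independent, and each vanishes automatically when $m<k$; consequently the displayed sum is the value at the integer $m$ of the single polynomial $P_d(m) = \sum_{k=0}^N p_{k,\,d+k}\,\frac{m!}{(m-k)!}$. Thus $P_d$ vanishes at every nonnegative integer, so $P_d \equiv 0$, and linear independence of the falling factorials forces $p_{k,\,d+k}=0$ for all $k$. Letting $d$ range over $\{N+1,N+2,\dots\}$, this says precisely that $p_{k,j}=0$ whenever $j=d+k\ge N+1+k$, that is, $\deg p_k \le N+k$, which is the claim. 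The only genuine subtlety is recognizing that the high off-diagonals of the adjoint identity are forced to zero on the $q$-side and that the residual $m$-dependence is polynomial; the rest is bookkeeping. (The same argument applied to $\dopmin^+$, whose formal adjoint is $\dopmin$, shows in addition that each $q_k$ is a polynomial with $\deg q_k \le N+k$.)
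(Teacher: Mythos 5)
Your proof is correct and takes essentially the same route as the paper: both test the adjoint identity on the monomial basis and exploit the fact that, because $L^+$ has order at most $N$, the matrix elements $\langle Lz^m,z^n\rangle_\beta$ must vanish for $n-m>N$, which forces the high Taylor coefficients of the $p_k$ to vanish. The only difference is in the final bookkeeping: the paper inducts on the coefficient index $k$ (evaluating at the single value $m=k$ each time), whereas you fix the offset $d=n-m$ and solve the whole system at once using linear independence of the falling factorials in $m$ --- a cosmetic, not structural, difference.
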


\begin{proof}

With respect to the basis $\eqref{orthobasis}$, the matrix elements of the differential expressions satisfy
\[\langle L e_m, e_n \rangle _{\beta } = \langle e_m, L^+ e_n \rangle _{\beta } = \overline{\langle L^+ e_n, e_m \rangle _{\beta }}, \quad m,n = 0,1,2,\dots .\]
Differentiation reduces the degree of $e_m$, while multiplication of $e_m$ by an analytic function $p_k(z)$ produces terms of equal or higher degree.
Since $L$ has order at most $N$,
\[\langle L e_m, e_n \rangle _{\beta } = 0, \quad n < m - N.\] 
$L^+$ has similar behaviour, so
\[ \overline{\langle L e_m, e_n \rangle _{\beta }} = \langle L ^+  e_n, e_m \rangle _{\beta } = 0, \quad  m < n - N,\]
or 
\begin{equation} \label{melbnd}
\langle L e_m, e_n \rangle _{\beta } = 0, \quad N < |m-n|  .
\end{equation} 

The proof now proceeds by induction on the coefficient index, starting with $k=0$.
Write $p_0(z) = \sum_{j=0}^{\infty } c_jz^j $ and notice that
\[\langle L e_0, e_n \rangle _{\beta } = \beta _0^{-1} \langle p_0(z),e_n \rangle _{\beta } .\]
The bound \eqref{melbnd} means that $c_j = 0$ for $j > N$, so $p_0(z)$ is a polynomial of degree at most $N$.
Suppose $p_k(z)$ is a polynomial of degree at most $N+k$ for $k < M$.  Letting $p_M(z) = \sum_{j=0}^{\infty } c_jz^j $,  
and using $D^{K}e_M = 0$ for $K >M$, 
\[\langle L e_M, e_n \rangle _{\beta } = \langle p_M(z)D^M e_M, e_n \rangle _{\beta } + \sum_{k=0}^{M-1} \langle p_k(z)D^ke_M,e_n\rangle _{\beta },\]
with
\[\langle p_M(z)D^M e_M, e_n \rangle _{\beta } = \beta _M^{-1} \langle p_M(z)D^M z^M, e_n \rangle _{\beta } = \beta _M^{-1} M! \langle p_M(z), e_n \rangle _{\beta }.\]
The induction hypothesis means that 
\[\sum_{k=0}^{M-1} \langle p_k(z)D^ke_M,e_n\rangle _{\beta } = 0 , \quad n > M+N,\]
while $\langle L e_M, e_n \rangle _{\beta } = 0$ for $n > M + N$ by  
\eqref{melbnd}, so $c_j = 0$ for $j > M+N$, and $p_M(z)$ is a polynomial of degree at most $M+N$.
\end{proof}

\begin{prop} \label{adjring}
With the hypotheses of \propref{adjform}, for each $L$ the formal adjoint $L^+$ is unique.
The set of expressions $L$ with $\Hardy _{\beta }$ differential expression adjoints is a complex algebra $\Dalg $ closed under the adjoint operation. 
\end{prop}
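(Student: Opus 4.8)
The plan is to treat the three assertions in turn, relying on the density of $\dommin$ in $\Hardy _{\beta}$ together with the fact, supplied by \thmref{adjform}, that any formal adjoint is a differential expression with polynomial coefficients.

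For uniqueness, I would suppose $L^+$ and $\wt{L}^+$ are both formal adjoints of $L$. Then $\langle f, (L^+ - \wt{L}^+)g \rangle _{\beta} = 0$ for all $f,g \in \dommin$; since the polynomials are dense in $\Hardy _{\beta}$, this forces $(L^+ - \wt{L}^+)g = 0$ for every polynomial $g$. The difference $R = L^+ - \wt{L}^+ = \sum_k r_k(z)D^k$ is again a differential expression with polynomial coefficients, so it remains to show that a polynomial-coefficient differential expression annihilating all polynomials must vanish. Applying $R$ successively to $1, z, z^2/2!, \dots$ and exploiting the triangular structure
\[R(z^k/k!) = r_k(z) + \sum_{j=0}^{k-1} r_j(z) \frac{z^{k-j}}{(k-j)!},\]
where the lower-index terms are already known to vanish, gives $r_k \equiv 0$ by induction on $k$, hence $R = 0$ and $L^+ = \wt{L}^+$.

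For the algebra structure, closure under addition and scalar multiplication follows immediately from linearity of the inner product: the relations $\langle (L_1+L_2)f,g \rangle _{\beta} = \langle f,(L_1^++L_2^+)g \rangle _{\beta}$ and $\langle cLf,g \rangle _{\beta} = \langle f,\overline{c}L^+g \rangle _{\beta}$ exhibit adjoints that are again differential expressions with polynomial coefficients. For products I would compute, for $f,g \in \dommin$,
\[\langle L_1 L_2 f, g \rangle _{\beta} = \langle L_2 f, L_1^+ g \rangle _{\beta} = \langle f, L_2^+ L_1^+ g \rangle _{\beta},\]
where the crucial point is that \thmref{adjform} guarantees $L_1^+$ has polynomial coefficients, so $L_1^+ g \in \dommin$ and the adjoint relation for $L_2$ genuinely applies; similarly $L_2 f \in \dommin$. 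Since the composition $L_2^+ L_1^+$ is again a differential expression with polynomial, hence $\Hardy _{\beta}$, coefficients, we conclude $L_1 L_2 \in \Dalg $ with $(L_1 L_2)^+ = L_2^+ L_1^+$.

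Finally, closure under the adjoint operation follows by conjugating the defining relation: from $\langle Lf,g \rangle _{\beta} = \langle f,L^+g \rangle _{\beta}$ one obtains, after taking complex conjugates and interchanging the roles of $f$ and $g$, the identity $\langle L^+ f, g \rangle _{\beta} = \langle f, Lg \rangle _{\beta}$, so that $L$ itself is a formal adjoint of $L^+$. Hence $L^+ \in \Dalg$, and by the uniqueness already established $(L^+)^+ = L$. The main obstacle is the uniqueness step, specifically the elementary but essential lemma that a polynomial-coefficient differential expression killing every polynomial is identically zero; once that is in hand, the remaining claims are short inner-product manipulations, with \thmref{adjform} doing the real work by keeping all the intermediate functions $L_1^+ g$ and $L_2 f$ inside $\dommin$.
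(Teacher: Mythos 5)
Your proof is correct and follows essentially the same route as the paper: uniqueness via the triangular action of the difference expression on the monomials $1, z, z^2, \dots$ with induction on the coefficient index, and the algebra identities $(L_1+L_2)^+ = L_1^+ + L_2^+$, $(L_1L_2)^+ = L_2^+L_1^+$, $(cL)^+ = \overline{c}L^+$ by direct inner-product manipulation. Your explicit observation that \thmref{adjform} keeps $L_1^+g$ and $L_2f$ inside $\dommin$, so the adjoint relations genuinely apply in the product computation, fills in a detail the paper dismisses as ``easily verified.''
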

  
\begin{proof}
Suppose 
\[\langle Le_m, e_n \rangle _{\beta } = \langle e_m,L_1^+ e_n \rangle _{\beta } = \langle e_m,L_2^+ e_n \rangle _{\beta }, \quad m,n = 0,1,2,\dots , \]
with $L_1^+ - L_2^+ = \sum_{k=0}^N q_k(z)D^k$.  Starting with $k=0$, an induction shows that 
\[0 = \langle e_m, (L_1^+ - L_2^+)e_k \rangle _{\beta } =  \beta _k^{-1} k! \langle e_m, q_k(z) \rangle _{\beta } , \quad m = 0,1,2,\dots ,\] 
for each $k$, so $L_1^+ = L_2^+$.

The formulas 
\[(L_1+L_2)^+ = L_1^+ + L_2^+, \quad (L_1L_2)^+ = L_2^+L_1^+, \quad (cL)^+ = {\overline c}L^+, c \in \complex \]
are then easily verified.

\end{proof}
  
\subsection{First order expressions and weight restrictions}

It is easy to check that the expressions $L = zD$, and more generally polynomials in $L$, have formal adjoints. 
Since $Lz^n = nz^n$, the space $\Hardy _{\beta }$ has an orthonormal basis of eigenfunctions for $\dopmin$, 
which is thus essentially self adjoint.  For $a_1,b_0 \in \complex$, if $L = a_1z D + b_0$, then $L^+ =  \overline{a_1}z D + \overline{b_0}$
is adjoint to $L$ on $\Hardy _{\beta }$ without restrictions on $\beta$.  This example is unusual.
For a more general expression of order $1$ satisfying the degree conditions of \thmref{adjform}, the existence of a formal adjoint
of the same order imposes additional constraints on both the weights and the coefficients of the expression.  
The weight restrictions are quite severe.

 Consider finding first order adjoint pairs,
\[L_1  = p_1(z)D+ p_0(z), \quad L_1^+ = q_1(z)D + q_0(z).\]  
By \thmref{adjform}, the expressions $L_1$ and $L_1^+$ have the form  
\begin{equation}  \label{foform}
L_1  = [a_2z^2 + a_1z + a_0]D + [b_1z +b_0], \quad 
L_1^+ = [c_2z^2 + c_1z + c_0]D + [d_1z +d_0].
\end{equation}
The coefficients are constrained by the requirement $ \langle L_1 z^m, z^n \rangle _{\beta } = \langle z^m, L_1^+ z^n \rangle _{\beta } $ for $m,n = 0,1,2,\dots $,
or 
\begin{equation} \label{case1}
\langle m[a_2z^{m+1} + a_1z^m +a_0z^{m-1} ] + b_1z^{m+1} + b_0z^m, z^n \rangle _{\beta }
\end{equation}
\[ = \langle z^m, n[c_2z^{n+1} + c_1z^n +c_0z^{n-1} ] + d_1z^{n+1} + d_0z^n \rangle _{\beta } .\]

The inner products are zero if $|m-n| > 1$.  For the cases $m=0$, $n=0,1$ and $n=0$ , $m=0,1$ the equations \eqref{case1} are equivalent to  
\begin{equation} \label{ipcase0}
\beta _0^2b_0 = \beta _0^2\overline{d_0}, \quad \beta _1^2b_1 = \beta_0^2\overline{c_0}, \quad \beta _0^2a_0 = \beta _1^2\overline{d_1} .
\end{equation}
For $m \ge 1$ and $n \ge 1$, \eqref{case1} is equivalent to the equations
\begin{equation} \label{gencase}
n= m+1: \beta _{m+1}^{2}(ma_2+b_1) =  \beta _m^{2}(m+1)\overline{c_0} ,
\end{equation}
\[ n = m: ma_1 + b_0 = m\overline{c_1} + \overline{d_0},\] 
\[n = m-1: \beta _{m-1}^{2} ma_0  = \beta _{m}^{2}[(m-1)\overline{c_2}+ \overline{d_1}] .\]
The first and third equations in \eqref{gencase} can be recast in the same form,
\begin{equation} \label{recform}
 \beta _{m+1}^{2}(ma_2+b_1) =  \beta _m^{2}(m+1)\overline{c_0} , \quad
\beta _{m+1}^{2}(m\overline{c_2}+ \overline{d_1}) = \beta _{m}^{2} (m+1)a_0 .
 \end{equation}
Eliminating the weights leaves
\begin{equation} \label{ocase}
(m\overline{c_2}+ \overline{d_1})\overline{c_0} = (ma_2+b_1)a_0 .
\end{equation}

The equations \eqref{ipcase0}, \eqref{gencase}, and \eqref{ocase} provide the following relations.
\begin{equation}  \label{relations}
 \overline{d_0} = b_0, \quad   \beta _0^2 \overline{c_0} =  \beta _1^2 b_1, \quad \beta _1^2 \overline{d_1} = \beta_0^2 a_0,
\end{equation}
\[\overline{c_1} = a_1,\quad \overline{c_0}\overline{d_1} = a_0b_1,\quad  \overline{c_0}\overline{c_2} = a_0a_2.\]

\begin{thm} \label{siga2}
Let $\{ \beta _n \}$ be a positive sequence.
Suppose $L_1  = [a_2z^2 + a_0]D + b_1z $  
has a formal adjoint $L_1^+ = [c_2z^2 + c_1z + c_0]D + [d_1z +d_0]$
on $\Hardy _{\beta }$.  If $a_2 \not= 0$, then $b_1 = \sigma a_2$ for some $\sigma > 0$.
The weights satisfy  
\begin{equation} \label{wtdef0}
\beta _{m+1}^{2}= \beta _{m}^{2}\frac{\beta _1^2}{\beta _0^2}\frac{(m+1)\sigma }{m+\sigma },
\end{equation}
and $\Hardy _{\beta }$ is a Hilbert space of functions analytic on $\disk _r$ with $r^2 = \beta _1^2 \sigma /\beta _0^2$.

\end{thm}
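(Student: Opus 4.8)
The plan is to read everything off from the relations already compiled in \eqref{relations} together with the recursion \eqref{gencase}, and then to extract the sign information that pins down $\sigma$. First I would specialize to the present coefficients, where $a_1 = b_0 = 0$; the relations \eqref{relations} then force $c_1 = d_0 = 0$ and, crucially, give $\overline{c_0} = (\beta_1^2/\beta_0^2)\,b_1$. Substituting this into the $n = m+1$ equation of \eqref{gencase}, namely $\beta_{m+1}^2(ma_2 + b_1) = \beta_m^2(m+1)\overline{c_0}$, and incorporating the $m=0$ instance from \eqref{ipcase0}, produces the single recursion
\[ \beta_{m+1}^2 (ma_2 + b_1) = \beta_m^2 (m+1)\frac{\beta_1^2}{\beta_0^2}\, b_1, \qquad m = 0,1,2,\dots. \]
Before dividing I would rule out $b_1 = 0$: if $b_1 = 0$ then $\overline{c_0} = 0$, the right-hand side vanishes, and $\beta_{m+1}^2\, m a_2 = 0$ would force $a_2 = 0$ for $m \ge 1$, contrary to hypothesis. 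Hence $b_1 \ne 0$, the right-hand side never vanishes, so $ma_2 + b_1 \ne 0$ for every $m$, and division is legitimate.

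The heart of the argument is a reality/positivity observation, and this is the step I expect to be the main obstacle, since it is the only place genuine information (rather than bookkeeping) is produced, and it is tempting to divide before the denominator is known to be nonzero. Writing $\sigma = b_1/a_2$ (allowable as $a_2 \ne 0$), the recursion becomes
\[ \frac{\beta_{m+1}^2}{\beta_m^2} = \frac{\beta_1^2}{\beta_0^2}\,\frac{(m+1)\sigma}{m+\sigma}. \]
The left side is a positive real and $\beta_1^2/\beta_0^2 > 0$, so $(m+1)\sigma/(m+\sigma)$ must be a positive real for every $m \ge 1$. Setting $\sigma = x + iy$, a short computation shows the imaginary part of this quantity equals $(m+1)m\,y/|m+\sigma|^2$, which is nonzero for $m \ge 1$ unless $y = 0$; hence $\sigma$ is real. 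With $\sigma$ real, the ratio for large $m$ has the sign of $\sigma$ (since $m+\sigma > 0$ there), so positivity forces $\sigma > 0$, i.e. $b_1 = \sigma a_2$ with $\sigma > 0$. Substituting back yields \eqref{wtdef0}.

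Finally, for the radius I would solve \eqref{wtdef0} explicitly by telescoping. With $\rho = \beta_1^2/\beta_0^2$ the product collapses, using $\prod_{m=0}^{n-1}(m+1) = n!$ and $\prod_{m=0}^{n-1}(m+\sigma) = (\sigma)_n$, to
\[ \beta_n^2 = \beta_0^2\,(\rho\sigma)^n\,\frac{n!}{(\sigma)_n}, \qquad (\sigma)_n = \sigma(\sigma+1)\cdots(\sigma+n-1). \]
The standard gamma-ratio asymptotic $n!/(\sigma)_n = \Gamma(\sigma)\,\Gamma(n+1)/\Gamma(n+\sigma) \sim \Gamma(\sigma)\,n^{1-\sigma}$ makes the $n$-th root tend to $1$, so $\lim_n \beta_n^{2/n} = \rho\sigma = \beta_1^2\sigma/\beta_0^2$; that is, \eqref{betabnd} holds with $r^2 = \beta_1^2\sigma/\beta_0^2 > 0$. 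Since $\beta_{m+1}^2/\beta_m^2 \to \rho$, the multiplier bound \eqref{multbnd} holds as well, so \propref{bndpe} applies and identifies $\Hardy_\beta$ as a space of functions analytic on $\disk_r$ with the stated radius. This last step is routine asymptotics once the recursion is in closed form.
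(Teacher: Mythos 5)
Your proposal is correct and follows essentially the same route as the paper: extract the recursion $\beta_{m+1}^2(ma_2+b_1)=\beta_m^2(m+1)\overline{c_0}$ with $\overline{c_0}=(\beta_1^2/\beta_0^2)b_1$, rule out $b_1=0$, deduce that $\sigma=b_1/a_2$ must be a positive real because the weight ratios are positive, and then telescope to get $\beta_n^{2/n}\to\beta_1^2\sigma/\beta_0^2$. The only differences are cosmetic: you establish reality of $\sigma$ by an imaginary-part computation at finite $m$ where the paper takes the limit $m\to\infty$ of the positive ratios, and you use Gamma-ratio asymptotics where the paper uses a logarithmic Taylor expansion.
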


\begin{proof}
If $a_2 \not= 0$, then \eqref{recform} implies $c_0 \not= 0$, and then $b_1 = \beta _0^2 \overline{c_0}/\beta _1^2 \not= 0$ from \eqref{relations}.
Using $\overline{c_0} = \beta _1^2 b_1/\beta _0^2$ in \eqref{recform} gives
\begin{equation} \label{betas}
\beta _{m+1}^{2} =  \beta _m^{2}\frac{\beta _1^2}{\beta _0^2}\frac{(m+1) b_1}{ma_2 + b_1}.
\end{equation}
The weights $\beta _m $ are positive, so 
\[ \lim_{m \to \infty}\frac{\beta _{m+1}^{2} }{\beta _m^{2}} = \frac{\beta _1^2}{\beta _0^2}\frac{b_1}{a_2 } > 0, \]
and $b_1 = \sigma a_2$ for some $\sigma > 0$.

The weight sequence now satisfies \eqref{wtdef0} so by induction
\[\beta _m^2 = \beta _1^2(\frac{\beta _1^2 \sigma }{\beta _0^2})^{m-1}\prod_{k=1}^{m-1} \frac{k+1}{k + \sigma }=  \beta _1^2(\frac{\beta _1^2 \sigma }{\beta _0^2})^{m-1}\prod_{k=1}^{m-1} \frac{1+1/k}{1 + \sigma/k }, \quad m \ge 2  .\]
A Taylor expansion gives
\[\lim_{m \to \infty} \Bigl [ \prod_{k=1}^{m-1} \frac{1+1/k}{1 + \sigma/k } \Bigr ]^{1/m} = \lim_{m \to \infty}\exp \Bigl (\frac{1}{m} \sum_{k=1}^{m-1} \log(1+ 1/k) - \log(1 + \sigma /k ) \Bigr ) \]
\[= \lim_{m \to \infty} \exp \Bigl (\frac{1}{m}\sum_{k=1}^{m-1} [ \frac{1-\sigma }{k} + O(k^{-2})]  \Bigr ) = 1,\]
so
\[ \lim_{m \to \infty} \beta _m^{2/m} = \frac{\beta _1^2 \sigma }{\beta _0^2}.\]
As in \propref{bndpe}, the natural domain for functions in $\Hardy _{\beta }$ is $\disk _r$.
\end{proof}

Define a collection of restricted weight sequences $\beta $ with 
\begin{equation} \label{wtdef}
\sigma > 0, \quad \beta _0 =1, \quad \beta _1^2 = 1/\sigma , \quad  
\beta _{m+1}^{2}= \beta _{m}^{2}\frac{m+1 }{m+\sigma }, \quad m \ge 1.
\end{equation}
The proof of \thmref{siga2} and \propref{bndpe} show that $\disk $ is the natural domain for $\Hardy _{\beta }$.
With these weight restrictions the adjoint pairs $L_1, L_1^+$ on $\Hardy _{\beta }$ have a simple description.  

\begin{thm} \label{char2}
Let $\beta $ be a weight sequence satisfying \eqref{wtdef}.  On $\Hardy _{\beta }$ every differential expression
\[L_1  = [a_2z^2 + a_1z +  a_0]D + \sigma a_2z + b_0 \]  
has a formal adjoint  
\[L_1^+  = [ \overline{a_0}z^2 + \overline{a_1}z + \overline{a_2}]D +  \sigma \overline{a_0} z + \overline{b_0}  .\]

\end{thm}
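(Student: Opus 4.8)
The plan is to verify directly that the candidate $L_1^+$ satisfies the adjoint-pair identity $\langle L_1 z^m, z^n\rangle_\beta = \langle z^m, L_1^+ z^n\rangle_\beta$ for all $m,n \ge 0$. Since $\dommin$ is spanned by the monomials $z^m$, establishing this for every pair of basis vectors suffices, and it then exhibits $L_1^+$ as a formal adjoint (unique, by \propref{adjring}).

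First I would record the two relevant actions explicitly:
\[L_1 z^m = a_2(m+\sigma)z^{m+1} + (ma_1 + b_0)z^m + ma_0 z^{m-1},\]
\[L_1^+ z^n = \overline{a_0}(n+\sigma)z^{n+1} + (n\overline{a_1}+\overline{b_0})z^n + n\overline{a_2}z^{n-1}.\]
Each output is supported on three consecutive powers, so by orthogonality of the $z^k$ every matrix element with $|m-n| > 1$ vanishes on both sides, reducing the verification to the three band cases $n = m-1$, $n = m$, and $n = m+1$.

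The diagonal case $n=m$ is immediate and holds with no constraint on the weights: the left side is $(ma_1+b_0)\beta_m^2$ and the right side is $\overline{(m\overline{a_1}+\overline{b_0})}\beta_m^2 = (ma_1+b_0)\beta_m^2$. The two off-diagonal cases carry the content of the theorem. For $n=m+1$ the identity reduces to $a_2(m+\sigma)\beta_{m+1}^2 = (m+1)a_2\beta_m^2$, and for $n=m-1$ it reduces to $ma_0\beta_{m-1}^2 = a_0(m-1+\sigma)\beta_m^2$; after cancelling $a_2$ or $a_0$ (both sides vanish when these are zero, so no hypothesis $a_2 \ne 0$ is needed) each collapses to the single recursion $\beta_{k+1}^2 = \beta_k^2(k+1)/(k+\sigma)$.

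The one step requiring care is matching this recursion against \eqref{wtdef}, and it is here that the precise weight normalization earns its keep. The interior instances $k \ge 1$ are literally the recursion stated in \eqref{wtdef}, so those cases are automatic. The boundary instance $k=0$ — which arises from $m=0$ in the $n=m+1$ case and from $m=1$ in the $n=m-1$ case — is not governed by that recursion and must be checked by hand; it reads $\sigma\beta_1^2 = \beta_0^2$, which holds precisely because $\beta_0 = 1$ and $\beta_1^2 = 1/\sigma$. Thus the special values of $\beta_0$ and $\beta_1$ are exactly what the lowest matrix elements demand, and once they are in place the identity holds for all $m,n$, completing the proof.
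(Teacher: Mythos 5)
Your proof is correct and follows essentially the same route as the paper's: the paper verifies the matrix-element identities by checking that the coefficients and weights satisfy equations \eqref{ipcase0} and \eqref{gencase}, which are exactly the boundary and interior band cases $|m-n|\le 1$ that you work out directly. Your explicit attention to the $k=0$ boundary instance (where $\beta_0=1$ and $\beta_1^2=1/\sigma$ are needed) makes visible a detail the paper disposes of with ``notice first that the equations of \eqref{ipcase0} are satisfied.''
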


\begin{proof}
Using the notation of \eqref{foform}, notice first that the equations of \eqref{ipcase0} are satisfied.
Since the terms $\beta _m$ are given by \eqref{wtdef}, the equations of \eqref{gencase} become
\[n= m+1: (m+1)(m+\sigma )a_2 =  (m+\sigma )(m+1)\sigma a_2/\sigma ,\]
\[ n = m: ma_1 + b_0 = ma_1 + b_0,\] 
\[n = m-1: (m-1+\sigma )ma_0  = m[(m-1)+ \sigma ]a_0 ,\]
so the equations of \eqref{gencase} are satisfied.
Since \eqref{ipcase0} and \eqref{gencase} are equivalent to the satisfaction of \eqref{case1},
$L_1$ and $L_1^+$ are an adjoint pair.
\end{proof}

The leading coefficient $p_1(z)$ of $L_1$ may be an arbitrary polynomial of degree at most $2$. 
The roots of the leading coefficients of  $L_1$ and $L_1^+$ are related by a simple transformation.

\begin{cor} \label{roots1}
Suppose $\beta $ satisfies \eqref{wtdef}, $p_1(z) = a_2z^2 + a_1z + a_0$ is the leading coefficient of $L_1$,
and $L_1^+$ is the formal adjoint of $L_1$ on $\Hardy _{\beta }$, with leading coefficient $q_1(z)$.  If $z_1 \not= 0$, then $p_1(z_1) = 0$ if and only if
$q_1(1/\overline{z_1}) = 0$.  
\end{cor}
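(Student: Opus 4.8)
The plan is to read off the leading coefficient $q_1(z)$ of $L_1^+$ directly from \thmref{char2} and then exhibit an explicit algebraic relation between $q_1(1/\overline{z_1})$ and $p_1(z_1)$. By \thmref{char2}, the formal adjoint of an expression $L_1$ whose leading coefficient is $p_1(z) = a_2z^2 + a_1z + a_0$ has leading coefficient
\[q_1(z) = \overline{a_0}z^2 + \overline{a_1}z + \overline{a_2}.\]
Thus $q_1$ is the conjugate-reciprocal polynomial of $p_1$: its coefficient list is the complex conjugate of that of $p_1$, read in reverse order. Recognizing this structure is what makes the stated root correspondence transparent.

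The key step is the identity obtained by substituting $z = 1/\overline{z_1}$ and clearing denominators. For $z_1 \not= 0$,
\[\overline{z_1}^{\,2}\, q_1(1/\overline{z_1}) = \overline{a_0} + \overline{a_1}\,\overline{z_1} + \overline{a_2}\,\overline{z_1}^{\,2} = \overline{a_2 z_1^2 + a_1 z_1 + a_0} = \overline{p_1(z_1)}.\]
Since $z_1 \not= 0$ forces $\overline{z_1}^{\,2} \not= 0$, the left-hand side vanishes precisely when $\overline{p_1(z_1)} = 0$, that is, precisely when $p_1(z_1) = 0$. This delivers both implications simultaneously, so no separate converse argument is needed.

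I do not expect a genuine obstacle here; the entire content is the observation that the adjoint operation of \thmref{char2} reverses and conjugates the coefficient list of the leading symbol, which is exactly the map $z \mapsto 1/\overline{z}$ acting on roots. The only place the hypothesis $z_1 \not= 0$ enters is the division by $\overline{z_1}^{\,2}$, and the root $z = 0$ is legitimately excluded because the reciprocal map is undefined there; indeed a zero of $p_1$ at the origin corresponds to $a_0 = 0$, which lowers the degree of $q_1$ rather than producing a finite nonzero root.
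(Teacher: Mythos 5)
Your proposal is correct and follows essentially the same route as the paper: both read off $q_1(z) = \overline{a_0}z^2 + \overline{a_1}z + \overline{a_2}$ from Theorem~\ref{char2} and use the identity $\overline{z_1}^{\,2}\,q_1(1/\overline{z_1}) = \overline{p_1(z_1)}$. Your version is marginally more explicit in noting that this single identity yields both implications at once, but the content is identical.
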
 

\begin{proof}
By \thmref{char2} the leading coefficient of $L_1^+$ is $q_1(z) = \overline{a_0}z^2 + \overline{a_1}z + \overline{a_2}$.
If $p_1(z_1) = 0$ then
\[0 = \overline{p_1(z_1)} = \overline{z_1^2}[ \overline{a_2} + \overline{a_1}/\overline{z_1} + \overline{a_0}/\overline{z_1}^{2}]
= \overline{z_1^2}q_1(1/\overline{z_1} ).\]
\end{proof}
 
\subsection{Algebraic properties of adjoint pairs}
 
Henceforth, the weight sequence $\beta $ is assumed to satisfy \eqref{wtdef}.  
In the proof of \thmref{siga2} it was noted that \eqref{betabnd} holds with $r=1$.
It is easy to check that \eqref{multbnd}, and so \propref{bndop}, also hold.
Let $\Dalg $ denote the complex algebra
of differential expressions $L$ with a formal $\Hardy _{\beta } $ adjoint expression $L^+$.

\begin{thm} \label{adjchar}  
As an algebra, $\Dalg$ is generated by its expressions with order at most one.  
If $L = \sum_{k=0}^N p_k(z)D^k \in \Dalg $, with $p_N(z) = \sum_{j=0}^{2N} c_jz^j$, then
the leading coefficient of $L^+$ is  $q_N(z) = \sum_{j=0}^{2N} \overline{c_j} z^{2N-j}$.
If $z_1 \not= 0$, then $p_N(z_1) = 0$ if and only if $q_N(1/\overline{z_1}) = 0$.  
\end{thm}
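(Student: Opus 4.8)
The plan is to prove the three assertions in order, with the generation statement carrying the real weight and the two statements about $L^+$ following by bookkeeping. Throughout I would fix the notation $A = zD$, $B = D$, and $C = z^2D + \sigma z$. By \thmref{char2} (taking $a_1=1$; $a_0=1$; $a_2=1$ respectively, all other coefficients zero) these are order-one elements of $\Dalg$, and the same formula computes their adjoints at once: $A^+ = A$, $B^+ = C$, $C^+ = B$. Their leading coefficients are $z^1$, $z^0$, $z^2$, and the point I would emphasize is that the adjoint sends the symbol exponent $s\in\{0,1,2\}$ to $2-s$.

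For the generation statement I would first record two facts. Since the leading coefficient of a product of differential operators is the product of the leading coefficients, a word $X_1\cdots X_N$ in the letters $A,B,C$ has order $N$ and leading coefficient the monomial $z^{s_1+\cdots+s_N}$, where $z^{s_i}$ is the leading coefficient of $X_i$. As the $s_i$ range over $\{0,1,2\}$ the exponent $s_1+\cdots+s_N$ attains every value $j$ with $0\le j\le 2N$ (take $\lfloor j/2\rfloor$ copies of $C$, at most one $A$, and the rest $B$), so for each such $j$ there is a length-$N$ word $G_j$, of order $N$, with leading coefficient exactly $z^j$. Then I would induct on the order $N$ of $L\in\Dalg$: an order-zero element is a constant by \thmref{adjform}, hence lies in the subalgebra $\mathcal{G}$ generated by the order-$\le 1$ expressions. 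For $N\ge 1$ write the leading coefficient as $p_N(z)=\sum_{j=0}^{2N} c_j z^j$ (degree $\le 2N$ by \thmref{adjform}) and set $\tilde L=\sum_{j=0}^{2N} c_j G_j$. Then $\tilde L\in\mathcal{G}$ has order $N$ and the same leading coefficient as $L$, so since $\Dalg$ is an algebra (\propref{adjring}), $L-\tilde L\in\Dalg$ has order at most $N-1$ and the inductive hypothesis finishes the step.

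Carrying this induction to completion writes $L=\sum_{k=0}^N \tilde L^{(k)}$, where each $\tilde L^{(k)}=\sum_j c_j^{(k)} G_j^{(k)}$ is a combination of length-$k$ words, has order at most $k$, and where $\tilde L^{(N)}$ carries the leading coefficient, so $c_j^{(N)}=c_j$. Applying the adjoint rules of \propref{adjring} gives $L^+=\sum_k\sum_j \overline{c_j^{(k)}}\,(G_j^{(k)})^+$, and only the $k=N$ terms reach order $N$. For a length-$N$ word $(X_1\cdots X_N)^+ = X_N^+\cdots X_1^+$, so by multiplicativity of leading coefficients together with $s\mapsto 2-s$ the word $(G_j^{(N)})^+$ has order $N$ with leading coefficient $z^{2N-j}$. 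Summing, the leading coefficient of $L^+$ is $\sum_{j=0}^{2N}\overline{c_j}\,z^{2N-j}=q_N(z)$, which is the second assertion. The third is then immediate, exactly as in \corref{roots1}: since $q_N(z)=z^{2N}\,\overline{p_N(1/\overline{z})}$, for $z_1\ne 0$ one has $q_N(1/\overline{z_1})=\overline{z_1}^{\,-2N}\,\overline{p_N(z_1)}$, which vanishes precisely when $p_N(z_1)=0$.

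The hard part will be the generation statement, and within it the one genuinely substantive point is that products of the three order-one generators realize every leading symbol $z^j$ with $0\le j\le 2N$. This is what makes every admissible leading coefficient (of degree at most $2N$, by \thmref{adjform}) attainable, and it rests on the ladder structure of $A,B,C$: $B$ lowers degree, $C$ raises it, and $A$ preserves it. By contrast the adjoint computations for the second and third assertions are routine once the generators and their adjoints are identified, and verifying that the order-reduction induction stays inside $\Dalg$ uses only that $\Dalg$ is an algebra.
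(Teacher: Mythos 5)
Your proof is correct, and it reaches the conclusion by a genuinely different route than the paper's at the one substantive step, namely how the leading coefficient is matched by order-one elements. The paper factors $p_N(z)=\alpha(z-z_1)\cdots(z-z_K)$ with $K\le 2N$, groups the linear factors into $N$ blocks of degree at most two, invokes \thmref{char2} to realize each block as the leading coefficient of a first-order element ${\cal F}_m\in\Dalg$, and matches $p_N$ all at once by the single product $P={\cal F}_1\cdots{\cal F}_N$; the formula for $q_N$ is then read off from the product $\prod_m(\overline{a_0}(m)z^2+\overline{a_1}(m)z+\overline{a_2}(m))=z^{2N}\sum_j\overline{c_j}z^{-j}$. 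You instead fix the three generators $zD$, $D$, $z^2D+\sigma z$, realize each monomial $z^j$ with $0\le j\le 2N$ as the leading symbol of a length-$N$ word $G_j$, and match $p_N$ monomial by monomial via the linear combination $\sum_j c_jG_j$, after which the adjoint formula follows from the exponent flip $s\mapsto 2-s$ applied letterwise to the reversed word. Both arguments rest on the same pillars (the degree bound of \thmref{adjform}, the order-one classification of \thmref{char2}, and the algebra and anti-homomorphism rules of \propref{adjring}), and both must dispose of the same small point --- that the adjoint of the lower-order remainder cannot contribute to $q_N$ --- which in your setup is immediate because a length-$k$ word and its reversed adjoint word both have order exactly $k<N$. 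Your route yields the slightly stronger byproduct that $\Dalg$ is generated by three explicit elements together with the constants, while the paper's factorization displays directly how the roots of $p_N$ are distributed among the first-order factors, which is the form exploited later in \corref{symroots} and the root-symmetry arguments.
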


\begin{proof}
The claim that $\Dalg$ is generated by its expressions with order at most one is proved by induction on the order, with the case of order at most $1$ trivially valid.
Suppose the result is true for order less than $N$ and assume $L$ has order $N$ with (nonzero) leading coefficient $p_N(z)$. 
By \thmref{adjform}, $p_N(z)$ is a polynomial of degree $K$, with $K \le 2N$, which may be written in factored form
\[p_N(z)  =  \alpha (z - z_1)\dots (z-z_K) .\]

By \thmref{char2}, the expressions in $\Dalg $ with order at most one may have any polynomial leading coefficient
of degree at most $2$.  Recall that if ${\cal F}_1, \dots , {\cal F}_N$ are differential expressions and the product
$P = {\cal F}_1 \cdots {\cal F}_N$ is written in the standard form \eqref{dexpress}, then the leading coefficient of $P$ is the product of the
leading coefficients of the ${\cal F}_m$.  Since $\deg (p_N(z)) \le 2N$ there is a product $P$ of $N$ first order expressions ${\cal F}_1, \dots , {\cal F}_N  \in \Dalg$
whose leading coefficient matches that of $L$.  $P$ has the formal adjoint ${\cal F} _N^+ \dots {\cal F} _1^+$.
The difference $L - P$ has strictly lower order than $N$.
By the induction hypothesis $L- P$ is in the algebra generated by expressions of order at most one,   
and so is $L $.

Suppose that for $m = 1,\dots ,N$ the expressions ${\cal F} _m$ have leading coefficients $a_2(m)z^2 + a_1(m)z + a_0(m)$.
Then
\[p_N(z) = \sum_{j=0}^{2N} c_jz^j = \prod_{m=1}^N (a_2(m)z^2 + a_1(m)z + a_0(m)) ,\] 
and if $q_N(z)$ denotes the leading coefficient of $L^+$, then by \thmref{char2}
\[q_N(z) = \prod_{m=1}^N (\overline{a_0}(m)z^2 + \overline{a_1}(m)z + \overline{a_2}(m))\]
\[ = z^{2N} \prod_{m=1}^N (\overline{a_0}(m) + \overline{a_1}(m)z^{-1} + \overline{a_2}(m)z^{-2})
= z^{2N}  \sum_{j=0}^{2N} \overline{c_j} z^{-j} = \sum_{j=0}^{2N} \overline{c_j} z^{2N-j}.\]

Finally, if $z_1 \not= 0$ and $p_N(z_1) = 0$, then
\[0 =  \sum_{j=0}^{2N} \overline{c_j} \overline{z_1}^j = \overline{z_1}^{2N}  \sum_{j=0}^{2N} \overline{c_j} \overline{z_1}^{j-2N} =  \overline{z_1}^{2N} q_N(1/\overline{z_1} ). \]
\end{proof}

Notice that the mapping $z_1 \to 1/\overline{z_1}$ taking roots of $p_N(z_1) $ to roots of $q_N$ extends to $z_1 = 0$ and $z_1 = \infty $ in the sense that 
if $p_N(z)$ has degree $K \le 2N$ with $z=0$ a root of order $m$, then $q_N(z)$ has degree $2N - m$ with $z=0$ a root of order $2N-K$.
 
Of course $p_N(z) = q_N(z)$ when $L$ is formally symmetric, so \thmref{adjchar} provides the following corollary.

\begin{cor} \label{symroots}
Suppose $L $ is formally symmetric with order $N$.  Then the leading coefficient $p_N(z)$ has the form
\[p_N(z) = c_Nz^N + \sum_{j=0}^{N-1} [c_jz^j +  \overline{c_j}z^{2N-j}] , \quad c_N = \overline{c_N}.\]
The nonzero roots $z_j$ of $p_N(z)$ are closed under the map $z_j \to 1/\overline{z_j}$.
\end{cor}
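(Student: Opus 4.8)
The plan is to read both assertions directly off \thmref{adjchar} by specializing to the symmetric case $L = L^+$. Since formal symmetry means precisely $L = L^+$, the leading coefficients must coincide, so $p_N(z) = q_N(z)$. Writing $p_N(z) = \sum_{j=0}^{2N} c_j z^j$, \thmref{adjchar} supplies the explicit form $q_N(z) = \sum_{j=0}^{2N} \overline{c_j} z^{2N-j}$, and the symmetry condition collapses to the single polynomial identity $\sum_{j=0}^{2N} c_j z^j = \sum_{j=0}^{2N} \overline{c_j} z^{2N-j}$.

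Next I would compare coefficients of $z^k$ on the two sides. On the right the monomial $z^{2N-j}$ contributes to $z^k$ exactly when $j = 2N-k$, so matching coefficients gives $c_k = \overline{c_{2N-k}}$ for $0 \le k \le 2N$. Taking $k = N$ yields $c_N = \overline{c_N}$, so the middle coefficient is real, while for $0 \le j \le N-1$ the relations read $c_{2N-j} = \overline{c_j}$. I would then split $p_N$ into the three blocks $j < N$, $j = N$, and $j > N$, and in the high-degree block substitute $j \mapsto 2N-j$; using $c_{2N-j} = \overline{c_j}$ pairs each high-degree term $c_{2N-j} z^{2N-j}$ with the low-degree term $c_j z^j$, producing exactly $p_N(z) = c_N z^N + \sum_{j=0}^{N-1}[c_j z^j + \overline{c_j} z^{2N-j}]$ with $c_N = \overline{c_N}$, as claimed.

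For the statement about roots I would again invoke $p_N = q_N$, now in the root half of \thmref{adjchar}: that result asserts, for $z_1 \ne 0$, that $p_N(z_1) = 0$ if and only if $q_N(1/\overline{z_1}) = 0$. Substituting $q_N = p_N$ turns this into $p_N(z_1) = 0$ if and only if $p_N(1/\overline{z_1}) = 0$, so the set of nonzero roots of $p_N$ is invariant under $z_1 \mapsto 1/\overline{z_1}$. I expect no genuine obstacle here, since the corollary is a direct specialization of \thmref{adjchar}; the only point needing a little care is the reindexing $j \mapsto 2N-j$ that folds the high-degree coefficients onto the low-degree ones and exhibits the conjugate-symmetric pairing cleanly.
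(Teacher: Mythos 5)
Your proposal is correct and follows exactly the paper's route: the corollary is stated there as an immediate consequence of \thmref{adjchar} via the identity $p_N = q_N$, and your coefficient comparison $c_{2N-j} = \overline{c_j}$ and root argument just make that specialization explicit. No issues.
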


Both \cite{Villone1} and \cite{StorkB} consider the problem of characterizing formally symmetric expressions $L$ on $\Berg$.
Villone \cite{Villone1} succeeds with a fairly complex recursive technique; a similar method appears again in \cite{Villone5}.  
Stork \cite{StorkB} shows that for $c \not= 0$ and $n = 1,2,3,\dots $ the examples
\[l_{n,r} = (cz^{n+r} + \overline{c}z^{n-r})D^n + \sum_{k=1}^r c \binom{r}{k}\frac{(n+1)!}{(n+1-k)!} z^{n+r-k}D^{n-k}, \quad  0 \le r \le n ,\]
are formally symmetric in $\Berg$ without providing a characterization of formal symmetry.
By taking advantage of simple adjoint formulas, an explicit characterization for $\Hardy _{\beta }$ is possible.

For $n = 0,1,2,\dots $ and $0 \le r \le n $ define expressions
\[B_{n,r} = (zD)^{n-r}D^{r}.\]
This is simply a product of first order expressions.  The expression $zD$ is formally symmetric, 
while the $\Hardy_{\beta }$ formal adjoint of $D$ is $z^2D+ \sigma z$ by \thmref{char2}.  Taking the adjoint factors in reverse order gives
\[B_{n,r}^+ = (z^2D + \sigma z)^{r}(zD)^{n-r}.\]
If $n=0$, then $B_{0,0}$ is simply multiplication by $1$.
Formally symmetric expressions can be obtained by adding an expression in $\Dalg $ to its adjoint expression, leading to the following observation.

\begin{lem} \label{bsym}
For $n=0,1,2,\dots $, $0 \le r \le n$, and $c_{n,r} \in \complex$, the expressions
$c_{n,r}B_{n,r} + \overline{c_{n,r}}B_{n,r}^+ $ are formally symmetric, with highest order term 
$(c_{n,r}z^{n-r} + \overline{c_{n,r}}z^{n+r})D^n$ when written in the standard form \eqref{dexpress}.  
\end{lem}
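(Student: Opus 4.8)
The plan is to treat the two assertions — formal symmetry, and the identification of the highest order term — separately, each following quickly from the algebraic structure already in hand.

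For the formal symmetry, I would write the expression as $L + L^+$ with $L = c_{n,r}B_{n,r}$. The rule $(cL)^+ = \overline{c}L^+$ from \propref{adjring} gives $(c_{n,r}B_{n,r})^+ = \overline{c_{n,r}}B_{n,r}^+$, so the second summand is exactly $L^+$, and it then suffices to observe that any $L + L^+$ is formally symmetric. The only ingredient here that is not literally among the displayed identities of \propref{adjring} is the involutivity $(L^+)^+ = L$. I would derive it from the defining relation $\langle Lf, g\rangle_\beta = \langle f, L^+ g\rangle_\beta$, which on conjugation reads $\langle L^+ g, f\rangle_\beta = \langle g, Lf\rangle_\beta$, exhibiting $L$ as a formal adjoint of $L^+$; uniqueness of the formal adjoint (\propref{adjring}) then forces $(L^+)^+ = L$. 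Granting this, $(L + L^+)^+ = L^+ + L$, and formal symmetry follows.

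For the highest order term I would invoke the fact recalled in the proof of \thmref{adjchar}: the leading coefficient of a product of differential expressions is the product of the leading coefficients of the factors (equivalently, principal symbols multiply). Since $B_{n,r} = (zD)^{n-r}D^{r}$ is a product of $n$ first order factors with leading coefficients $z$ (for each $zD$) and $1$ (for each $D$), it has order $n$ and leading coefficient $z^{n-r}$, hence highest order term $z^{n-r}D^n$. Likewise $B_{n,r}^+ = (z^2D + \sigma z)^{r}(zD)^{n-r}$ has leading coefficients $z^2$ (for each $z^2D + \sigma z$) and $z$ (for each $zD$), giving leading coefficient $z^{2r}z^{n-r} = z^{n+r}$ and highest order term $z^{n+r}D^n$. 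Combining $c_{n,r}$ times the first with $\overline{c_{n,r}}$ times the second yields $(c_{n,r}z^{n-r} + \overline{c_{n,r}}z^{n+r})D^n$, as claimed; this is consistent with the leading coefficient form in \corref{symroots} upon setting $j = n-r$ (so that $2N - j = n+r$).

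I do not anticipate a genuine obstacle: once the product rule for leading coefficients is invoked, the computation is mechanical, and the order-reversal in passing to $B_{n,r}^+$ is already recorded in the excerpt. The one point that deserves explicit mention rather than silent use is the involutivity $(L^+)^+ = L$ underlying the symmetry claim, which I would justify via uniqueness of the formal adjoint as above.
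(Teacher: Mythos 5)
Your proposal is correct and follows essentially the same route the paper intends: the paper offers no separate proof, relying on the preceding observation that adding an expression in $\Dalg$ to its adjoint yields a formally symmetric expression, together with the displayed factorizations $B_{n,r}=(zD)^{n-r}D^r$ and $B_{n,r}^+=(z^2D+\sigma z)^r(zD)^{n-r}$ and the multiplicativity of leading coefficients already recalled in \thmref{adjchar}. Your explicit justification of the involutivity $(L^+)^+=L$ via uniqueness of the formal adjoint is a worthwhile detail the paper leaves implicit.
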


\begin{thm} \label{symmchar}
An differential expression $L$ is formally symmetric in $\Hardy _{\beta}$ if and only if it can be written in the form
\begin{equation} \label{symform}
L = \sum_{n=0}^N \sum_{r=0}^n [c_{n,r}B_{n,r} + \overline{c_{n,r}}B_{n,r}^+].
\end{equation}
\end{thm}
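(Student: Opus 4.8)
The plan is to prove the two implications separately: the forward (``if'') direction is immediate from \lemref{bsym}, while the reverse (``only if'') direction goes by induction on the order $N$ of $L$.

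For the forward direction, suppose $L$ has the form \eqref{symform}. Each summand $c_{n,r}B_{n,r} + \overline{c_{n,r}}B_{n,r}^+$ is formally symmetric by \lemref{bsym}. Since the adjoint operation is additive, $(L_1 + L_2)^+ = L_1^+ + L_2^+$ by \propref{adjring}, the formally symmetric expressions form a real-linear subspace, so the finite sum $L$ is again formally symmetric.

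For the reverse direction I would induct on $N$. The base case $N = 0$ is the observation that a formally symmetric order-zero expression is multiplication by a real constant $c$: by \thmref{adjform} its coefficient is a constant, and $L = L^+$ forces that constant real; then $c = c_{0,0}B_{0,0} + \overline{c_{0,0}}B_{0,0}^+$ with $c_{0,0} = c/2$. For the inductive step, assume the result for orders below $N$ and let $L$ be formally symmetric of order $N$ with leading coefficient $p_N(z)$. By \corref{symroots},
\[ p_N(z) = c_N z^N + \sum_{j=0}^{N-1}[c_j z^j + \overline{c_j} z^{2N-j}], \quad c_N = \overline{c_N}. \]
The crux of the argument is that the top-order terms furnished by \lemref{bsym} match this shape exactly: the $D^N$-coefficient of $\sum_{r=0}^N [c_{N,r}B_{N,r} + \overline{c_{N,r}}B_{N,r}^+]$ is $\sum_{r=0}^N (c_{N,r}z^{N-r} + \overline{c_{N,r}}z^{N+r})$. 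Choosing $c_{N,0} = c_N/2$ and $c_{N,r} = c_{N-r}$ for $1 \le r \le N$ turns this sum into precisely $p_N(z)$.

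With these coefficients fixed, set $L' = L - \sum_{r=0}^N [c_{N,r}B_{N,r} + \overline{c_{N,r}}B_{N,r}^+]$. Then $L'$ is formally symmetric as a difference of formally symmetric expressions, and its $D^N$-coefficient vanishes by the matching just described, so $L'$ has order at most $N-1$. The induction hypothesis expresses $L'$ in the form \eqref{symform}, and restoring the subtracted generators expresses $L$ in the same form. I expect the only delicate point to be this leading-coefficient matching---verifying that as $r$ runs over $0,\dots,N$ the pairs $z^{N-r},z^{N+r}$ cover exactly the monomials permitted by \corref{symroots}, the sole redundancy being the harmless loss of $\mathrm{Im}(c_{N,0})$ at the top level, since $B_{N,0}=(zD)^N$ is already self-adjoint. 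The remaining verifications, that $L'$ stays formally symmetric and that subtracting order-$N$ generators cannot raise the order, are routine bookkeeping.
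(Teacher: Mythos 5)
Your proposal is correct and follows essentially the same route as the paper: the forward direction by noting sums of formally symmetric expressions are formally symmetric, and the reverse direction by induction on the order, using \corref{symroots} and \lemref{bsym} to match the leading coefficient with $\sum_{r=0}^N [c_{N,r}B_{N,r} + \overline{c_{N,r}}B_{N,r}^+]$ and then subtracting. Your explicit choice $c_{N,0}=c_N/2$, $c_{N,r}=c_{N-r}$ and the remark about the redundancy of $\mathrm{Im}(c_{N,0})$ merely spell out details the paper leaves implicit.
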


\begin{proof}
The given form is the sum of formally symmetric operators, so is formally symmetric.

Suppose $L$ is formally symmetric and of order $N$.  
If $N=0$ then the expression is multiplication by a real constant.
Proceeding by induction, assume a formally symmetric expression with order less than $N$
has the given form.  By \corref{symroots} and \lemref{bsym} the leading coefficient of $\dop $ can be matched
by a formally symmetric expression $\sum_{r=0}^N [c_{N,r}B_{N,r} + \overline{c_{N,r}}B_{N,r}^+] $.  
Since $L - \sum_{r=0}^N [c_{N,r}B_{N,r} + \overline{c_{N,r}}B_{N,r}^+]$ has order less than $N$,
it has the desired form by the induction hypothesis, and so $L$ has the prescribed form.
\end{proof}

\section{$\Dreg$ operators }

Although the class of expressions $L \in \Dalg$ is rather restricted, there is an interesting overlap with equations of the Fuchsian class.
When $L \in \Dalg $ has order two, eigenvalue equations $Ly = \lambda y$ include many of the classical Riemann and Heun equations,
as well as problems with five regular singular points.  As demonstrated below, the root symmetry in the leading coefficients of symmetric expressions
can lead to existence theorems for analytic eigenfunctions, a manifestation of global features of the monodromy data at regular 
singular points.

Recall that the weight sequences $\beta $ satisfy \eqref{wtdef}.
If the differential expression $L$ as in \eqref{dexpress} has coefficients $p_k(z) \in \Hardy _{\beta }$,   
the minimal operator $\dop _{min}$ may be extended to a maximal operator $\dop _{max}$ with the domain,
\begin{equation} \label{domdef}
\dom _{max} = \{ f \in \Hardy _{\beta } \ | \ Lf \in \Hardy _{\beta} \}.
\end{equation}

\begin{prop}
The operator $\dop _{max}$ with domain $\dom _{max}$ is closed in $\Hardy _{\beta }$.
\end{prop}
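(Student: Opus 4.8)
The plan is to verify the standard sequential criterion for closedness: given a sequence $\{ f_n \} \subset \dom _{max}$ with $f_n \to f$ and $\dop _{max} f_n \to g$ in $\Hardy _{\beta }$, I must show that $f \in \dom _{max}$ and $\dop _{max} f = g$. The key observation is that norm convergence in $\Hardy _{\beta }$ is strong enough to force pointwise convergence not only of the functions themselves but of all of their derivatives, which is precisely the content of \propref{bndpe}. Since $f$ is a norm limit in the complete space $\Hardy _{\beta }$, the membership $f \in \Hardy _{\beta }$ is automatic, so the whole task reduces to identifying $Lf$ with $g$.

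First I would fix a point $z_1 \in \disk $ and recall that, because $f_n \to f$ in $\Hardy _{\beta }$, \propref{bndpe} gives $f_n^{(k)}(z_1) \to f^{(k)}(z_1)$ for every $k = 0,1,\dots ,N$, the derivative evaluation functionals being bounded. Since the coefficients $p_k(z)$ lie in $\Hardy _{\beta }$ and are therefore analytic on $\disk $, the value
\[
(\dop _{max} f_n)(z_1) = \sum_{k=0}^N p_k(z_1) f_n^{(k)}(z_1)
\]
is a fixed finite linear combination of those functionals, so $(\dop _{max} f_n)(z_1) \to \sum_{k=0}^N p_k(z_1) f^{(k)}(z_1) = (Lf)(z_1)$. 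On the other hand, since $\dop _{max} f_n \to g$ in $\Hardy _{\beta }$, the same proposition yields $(\dop _{max} f_n)(z_1) \to g(z_1)$.

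Comparing the two limits at the arbitrary point $z_1$ gives $(Lf)(z_1) = g(z_1)$ throughout $\disk $. As $g \in \Hardy _{\beta }$, this shows $Lf = g \in \Hardy _{\beta }$, whence $f \in \dom _{max}$ and $\dop _{max} f = g$, establishing that $\dop _{max}$ is closed. The only step requiring genuine care — and the reason the argument succeeds even though $\dop _{max}$ is an unbounded operator — is the passage from norm convergence to pointwise convergence of the derivatives; this is not automatic in an abstract weighted sequence space, but it is guaranteed here because \eqref{betabnd} holds with $r = 1$, so \propref{bndpe} applies and the derivative evaluations are continuous linear functionals on $\Hardy _{\beta }$. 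I expect this to be the main point to articulate clearly, while the remaining bookkeeping is routine.
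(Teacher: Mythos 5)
Your proof is correct and follows essentially the same route as the paper: both arguments reduce closedness to the continuity of the derivative evaluation functionals from \propref{bndpe}, so that $f_n^{(k)} \to f^{(k)}$ locally and hence $Lf = g$ pointwise on $\disk$, identifying $Lf$ with the limit $g \in \Hardy_{\beta}$. The paper phrases this via uniform convergence on compact subsets rather than evaluation at a single point, but the substance is identical.
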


\begin{proof}
The argument follows \cite{Villone1}. 
Suppose $f_j \in \dom _{max}$ for  $j = 1,2,3,\dots $, and that $\{ f_j \}$ and $\{ \dop f_j \}$ are Cauchy sequences in $\Hardy _{\beta}$,
converging respectively to $f$ and $g$.  By \propref{bndpe} the sequences $\{ f_j^{(k)} \}$ converge uniformly to $f^{(k)}$ on any compact subset $K$ of $\disk$,
so $\{ \dop f_j \} $ converges uniformly on $K$, with $\dop f = g$.  
\end{proof}

With additional hypotheses, $L$ may be interpreted as a more conventional operator on $\real$. 
If the coefficients $p_k(z)$ are analytic on the closed disk $\overline{\disk }$, 
the change of variables $z = e^{it}$ on the unit circle leads to a $2\pi $-periodic expression 
\[L_t = \sum_{k=0}^N p_k(e^{it}) (-i e^{-it} \frac{d}{dt})^k , \quad t \in \real .\]
If the leading coeffiicient $p_N(z)$ has no zeros when $|z| =1$, then the leading coefficient
$p_N(e^{it})(-ie^{-it})^N$ will have no zeros for $t \in \real $.  The expression $L_t$
will then fall into the periodic``regular case" \cite[p. 188-194]{CL} \cite[p. 1280]{DS}.
Features such as closed range and compact resolvent are common for operators acting by $L_t$   
on a variety of domains consisting of $2\pi $-periodic functions, 

It seems reasonable to expect a class of such ``regular" problems on $\Hardy _{\beta }$.  
Say that $L$ is $\Dreg$ if the coefficients $p_k(z)$ are analytic on the closed unit disk $\overline{\disk }$, and $p_N(z) \not= 0$ when $|z| = 1$;
in particular the set $\{ z_k  \ | \ |z_k| < 1, p_N(z_k) = 0 \} $ is finite. 
Stork \cite{StorkB} has shown that if $L$ is $\Dreg$ and formally symmetric on $\Berg $, then the closure of the minimal operator $\dop _{min}$ is self adjoint,
with compact resolvent.  The main goal of this section is to develop the techniques needed to extend Stork's result to formally symmetric 
expressions on $\Hardy _{\beta}$, with related results for more general $\Dreg $ expressions.

\subsection{$\Hardy _{\beta }$ Sobolev spaces}

For weight sequences $\beta$ satisfy \eqref{wtdef}, and for $k=0,1,2, \dots $, 
define additional Hilbert spaces $\Hardy _{\beta }^k$ of functions $f \in \Hardy _{\beta }$ whose $k$-th derivative is also in $\Hardy _{\beta }$.
For $f = \sum_{n=0}^{\infty } c_nz^n$, and $g = \sum_{n=0}^{\infty } b_nz^n$, the $\Hardy _{\beta}^k$ inner product is
\[\langle f,g\rangle _k = \sum_{n=0}^{\infty} (1 + n^{2k})c_n\overline{b_n} \beta _n^2 ,\]
the norm is given by $\| f \| _k^2 = \langle f,f \rangle _k$, and the set of elements is
\begin{equation} \label{Sobdef}
\Hardy _{\beta }^k = \{ f = \sum_{n=0}^{\infty } c_nz^n \ | \ \sum_{n=0}^{\infty} (1 + n^{2k})|c_n|^2 \beta _n^2 < \infty \} .
\end{equation}

The fact that multiplication by $z$ acts as a bounded operator can be extended to a broader class of multiplication operators 
on the spaces $\Hardy _{\beta }^k$.  Let $C_{\beta} =  \sup_n \beta _{n+1}/\beta _n$.  The conditions \eqref{wtdef} imply $1 \le C_{\beta } <\infty $, and $C_{\beta} = 1$ for $\sigma \ge 1$.

\begin{prop} \label{bndmult}
Suppose $\phi (z) = \sum_{n=0}^{\infty} \alpha _nz^n $, with $\sum_{n=0}^{\infty} |\alpha _n|2^{nk}C_{\beta }^n < \infty $.
Then the operator $M_{\phi}f = \phi(z)f(z)$ is bounded on $\Hardy _{\beta }^k$.
\end{prop}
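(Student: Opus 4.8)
The plan is to reduce boundedness of $M_{\phi}$ to a uniform estimate on the monomial multiplication operators $M_{z^n}f = z^nf$ and then sum those bounds against the hypothesis. Writing $\phi (z) = \sum_{n=0}^{\infty} \alpha _n z^n$, for a polynomial $f$ one has $\phi f = \sum_{n=0}^{\infty} \alpha _n z^n f$, so by the triangle inequality $\| M_{\phi} f \| _k \le \sum_{n=0}^{\infty} |\alpha _n| \, \| z^n f \| _k$. Thus it suffices to produce a bound $\| z^n f \| _k \le C\, 2^{nk} C_{\beta }^n \| f \| _k$ with $C$ independent of $n$ and $f$; summing over $n$ and invoking $\sum_n |\alpha _n| 2^{nk} C_{\beta }^n < \infty$ then shows $\phi f \in \Hardy _{\beta }^k$ with a uniform norm bound on the dense subspace of polynomials, so $M_{\phi}$ extends to a bounded operator. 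That this extension is genuinely multiplication by $\phi$ follows because the partial sums converge both in $\| \cdot \| _k$ and, via the continuous embedding $\Hardy _{\beta }^k \subset \Hardy _{\beta }$ together with \propref{bndpe}, uniformly on compact subsets of $\disk$, so the two limits agree.

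The monomial estimate is the computational heart. For $f = \sum_{m=0}^{\infty} c_m z^m$, shifting the index by $n$ gives
\[ \| z^n f \| _k^2 = \sum_{m=0}^{\infty} \bigl (1 + (m+n)^{2k} \bigr ) \, |c_m|^2 \, \beta _{m+n}^2 . \]
Two weight factors must be controlled uniformly in $m$. For the $\beta$-weights, telescoping $\beta_{m+n}/\beta_m = \prod_{i=0}^{n-1} \beta_{m+i+1}/\beta_{m+i}$ together with $C_{\beta} = \sup_n \beta_{n+1}/\beta_n$ gives $\beta_{m+n}^2 \le C_{\beta}^{2n} \beta_m^2$. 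For the Sobolev weight I would establish the index-shift inequality
\[ 1 + (m+n)^{2k} \le 2^{2k+1} (1 + n^{2k})(1 + m^{2k}), \qquad m, n \ge 0, \]
using $(m+n)^{2k} \le 2^{2k} \max(m,n)^{2k} \le 2^{2k}(1+m^{2k})(1+n^{2k})$ and the trivial bound $1 \le (1+m^{2k})(1+n^{2k})$ for the constant term. Combining the two factors yields $\| z^n f \| _k^2 \le 2^{2k+1}(1+n^{2k}) C_{\beta}^{2n} \| f \| _k^2$, hence $\| M_{z^n} \| \le 2^{k+1/2}(1+n^{2k})^{1/2} C_{\beta}^n$.

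Finally I would absorb the polynomial factor into the exponential one. Since $n^{2}2^{-n}$ is bounded on the nonnegative integers (its maximum is $9/8$, attained at $n=3$), one has $n^{2k} \le (9/8)^k 2^{nk}$ for $n \ge 1$, and therefore $(1+n^{2k})^{1/2} \le C' 2^{nk}$ for all $n \ge 0$ with a constant $C'$ depending only on $k$. This gives $\| M_{z^n} \| \le C\, 2^{nk} C_{\beta }^n$, which is exactly the bound needed in the first paragraph, completing the argument.

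I expect the main obstacle to be the index-shift inequality for the Sobolev weight, which must hold uniformly for all $m, n \ge 0$ at once: the ratio $(1+(m+n)^{2k})/(1+m^{2k})$ is large for small $m$ but tends to $1$ as $m \to \infty$, and it is precisely this behavior that forces the factor $(1+n^{2k})^{1/2}$ and hence the exponential weight $2^{nk}$ in the hypothesis. Everything else is routine; the one point deserving an explicit (if brief) statement is the identification of the norm-limit operator with $M_{\phi}$.
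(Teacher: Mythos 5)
Your proof is correct, but it reaches the key monomial bound by a different route than the paper. The paper estimates only the single operator $M_z$ on $\Hardy _{\beta }^k$, obtaining $\| M_z \| \le 2^k C_{\beta }$ from the ratio $\frac{\beta _{n+1}^2}{\beta _n^2}\cdot \frac{1+(n+1)^{2k}}{1+n^{2k}}$, and then invokes submultiplicativity of the operator norm, $\| M_{z^n} \| \le \| M_z \| ^n \le 2^{nk}C_{\beta }^n$, so that the hypothesis is exactly the condition for absolute convergence of $\sum _n \alpha _n M_z^n$ in the Banach algebra of bounded operators. You instead estimate $\| M_{z^n} \|$ directly via the index shift and the inequality $1+(m+n)^{2k} \le 2^{2k+1}(1+n^{2k})(1+m^{2k})$. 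Both arguments are sound; the paper's is shorter, while yours is sharper: your bound $\| M_{z^n} \| \le 2^{k+1/2}(1+n^{2k})^{1/2}C_{\beta }^n$ grows only polynomially in $n$ (times $C_{\beta }^n$), whereas iterating $\| M_z \|$ compounds the factor $2^k$ at every step. In particular your argument proves the proposition under the weaker hypothesis $\sum _n |\alpha _n|\, n^k C_{\beta }^n < \infty $; the stated hypothesis with $2^{nk}$ is tailored to the submultiplicativity proof, and your final step of absorbing $(1+n^{2k})^{1/2}$ into $2^{nk}$ is only needed to match that stated form. Your explicit identification of the norm limit with $M_{\phi }$ via uniform convergence on compact subsets of $\disk $ (using \propref{bndpe}) is a point the paper leaves implicit, and it is handled correctly.
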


\begin{proof}
First consider  multiplication by $z$ on $\Hardy _{\beta }^k$.
With $ f = \sum_{n=0}^{\infty} c_nz^{n}$, and the usual operator norm $ \|M_{z} \| = \sup_{\| f \| _k \le 1} \|zf\| _k$,  
\[ \| zf \|_k^2 =  \sum_{n=0}^{\infty} (1 + n^{2k})|c_{n}|^2 \beta_{n}^2 \frac{\beta _{n+1}^2}{\beta _{n}^2} \frac{(1+ [n+1]^{2k})}{(1+n^{2k})} \]
\[ \le \sup_n \frac{\beta _{n+1}^2}{\beta _{n}^2} \frac{1+ [n+1]^{2k}}{1+n^{2k}} \| f \|_k^2 \le 2^{2k}C_{\beta }^2\| f \| _k^2 ,\]
so $M_z$ is a bounded operator on $\Hardy _{\beta }^k$ with norm bounded by $2^kC_{\beta }$. 

The operator norm is submultiplicative, and absolutely convergent series in the Banach space of bounded operators are convergent,
so $M_{\phi }$ is bounded on $\Hardy _{\beta }^k$.
\end{proof}

The next lemma uses a standard Fourier series argument.

\begin{lem} \label{Sobest}
Suppose $f \in \Hardy _{\beta }^k$ and $0 \le j < k$.  Then for any $\epsilon > 0$ there is a constant $C$, independent of $f$, such that
\begin{equation} \label{epsest}
\| f \| _j \le \epsilon \| f \| _k + C \| f \|_{\beta } .
\end{equation}
The set $B = \{ f \in \Hardy _{\beta }^k \ | \ \| f \| _k \le 1 \} $ has compact closure in $\Hardy _{\beta }^j$.

\end{lem}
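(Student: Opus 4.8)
The plan is to work entirely at the level of the power-series (Fourier) coefficients, since the three norms involved are all weighted $\ell^2$ sums of the coefficients $c_n$ of $f = \sum_n c_nz^n$, differing only in the multiplier attached to the common weights $|c_n|^2\beta_n^2$: these multipliers are $1$ for $\|\cdot\|_\beta$, $1+n^{2j}$ for $\|\cdot\|_j$, and $1+n^{2k}$ for $\|\cdot\|_k$. The single elementary fact driving both parts of the lemma is that, because $j < k$,
\[
\eta_N := \sup_{n > N} \frac{1 + n^{2j}}{1 + n^{2k}} \longrightarrow 0 \quad \text{as } N \to \infty ,
\]
so the high-frequency part of the $\|\cdot\|_j$ norm is dominated by an arbitrarily small multiple of the $\|\cdot\|_k$ norm, while the low-frequency part is controlled by the $\|\cdot\|_\beta$ norm.

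For the inequality \eqref{epsest} I would split the sum defining $\|f\|_j^2$ at an index $N$. On the tail $n > N$, the bound $(1+n^{2j}) \le \eta_N(1+n^{2k})$ controls that piece by $\eta_N\|f\|_k^2$. On the head $n \le N$, the crude estimate $(1+n^{2j}) \le 1 + N^{2j}$ controls that piece by $(1+N^{2j})\|f\|_\beta^2$. Combining,
\[
\|f\|_j^2 \le \eta_N \|f\|_k^2 + (1+N^{2j})\|f\|_\beta^2 ,
\]
and taking square roots with $\sqrt{a+b} \le \sqrt a + \sqrt b$ gives \eqref{epsest} with $\epsilon = \sqrt{\eta_N}$, which is made as small as desired by choosing $N$ large, and $C = \sqrt{1+N^{2j}}$.

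For the compactness assertion I would show that the finite-rank truncation projections $P_Nf = \sum_{n=0}^N c_nz^n$ approximate the identity uniformly on $B$ in the $\Hardy_\beta^j$ norm. For $\|f\|_k \le 1$,
\[
\|f - P_Nf\|_j^2 = \sum_{n>N}(1+n^{2j})|c_n|^2\beta_n^2 \le \eta_N \sum_{n>N}(1+n^{2k})|c_n|^2\beta_n^2 \le \eta_N ,
\]
so $\sup_{f \in B}\|f - P_Nf\|_j \le \sqrt{\eta_N} \to 0$. Each set $P_N(B)$ lies in the finite-dimensional span of $z^0,\dots,z^N$ and is bounded there (since $\|P_Nf\|_j \le \|f\|_k \le 1$), hence precompact in $\Hardy_\beta^j$. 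Together with the uniform approximation, a standard $\epsilon$-net argument then shows $B$ is totally bounded, and therefore has compact closure in the complete space $\Hardy_\beta^j$.

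I do not expect a serious obstacle: the lemma is an instance of the usual interpolation (Ehrling) and Rellich-type compactness estimates, here transparent because everything is diagonal in the coefficient basis. The only points needing care are confirming $\eta_N \to 0$, which is immediate from $j<k$, and respecting that the constant $C$ in \eqref{epsest} may depend on $\epsilon$ (through $N$) but must be independent of $f$ — which holds since $N$ is selected before $f$.
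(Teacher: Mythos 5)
Your proof is correct and takes essentially the same approach as the paper: both parts rest on the observation that $(1+n^{2j})/(1+n^{2k})\to 0$, and your split-at-$N$ estimate is just the paper's pointwise inequality $(1+n^{2j})\le\epsilon^2(1+n^{2k})+C^2$ in disguise. The only cosmetic difference is in the compactness half, where you establish total boundedness via finite-rank truncations while the paper extracts a coefficientwise-convergent subsequence by diagonalization and verifies it is Cauchy using the same uniform tail bound; the two arguments are interchangeable.
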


\begin{proof} For $\epsilon > 0$  the inequality $ (1+n^{2j}) \le \epsilon ^2 (1+n^{2k}) + C^2 $ holds 
for $n = 0,1,2,\dots ,$ and $C$ sufficiently large.  Thus
\[ \| f \| _j^2 \le \epsilon ^2 \| f \| _k^2 + C^2 \| f \|_{\beta } ^2 \le (\epsilon \| f \| _k + C \| f \|_{\beta } )^2,\]
establishing \eqref{epsest}.

Suppose $\{ f_l(z) = \sum_{n=0}^{\infty} c_n(l)z^n \}$ is a sequence in $B$.  For each fixed $n$, the sequence $c_n(l)$ 
is bounded, so by the usual diagonalization argument \cite[p. 167]{Royden} the sequence $\{f_l(z) \}$ has a subsequence $\{ f_m(z) \}$ such that
$\{ c_n(m), m =1,2,3,\dots  \}$ is a Cauchy sequence in $\complex $ for each $n$.

Since $\| f_m(z) \|_k \le 1$, for any $\epsilon > 0$ there is an $N$ such that 
\[ \sum_{n=N}^{\infty} (1 + n^{2j})|c_n(m)|^2 \beta _n^2 < \epsilon , \quad m = 1,2,3,\dots .\]
Since the sequences $c_n(m) \in \complex $ are convergent for $n < N$, the sequence $\{f_m(z) \} $ is a Cauchy sequence in $\Hardy _{\beta}^j$.

\end{proof}

\begin{lem} \label{donly}
If $f(z)$ is analytic in $\disk $ and $D^k f \in \Hardy _{\beta }$,
then $f \in \Hardy _{\beta }^k$.  
\end{lem}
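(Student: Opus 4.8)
The plan is to argue directly on Taylor coefficients. Since $f$ is analytic in $\disk$, write $f = \sum_{n=0}^{\infty} c_n z^n$, with finite coefficients and convergence throughout $\disk$. Differentiating term by term, $D^k f = \sum_{n \ge k} c_n \frac{n!}{(n-k)!} z^{n-k}$, so after reindexing the hypothesis $D^k f \in \Hardy_\beta$ reads
\[
\| D^k f \|_\beta^2 = \sum_{n=k}^{\infty} |c_n|^2 \left( \frac{n!}{(n-k)!} \right)^2 \beta_{n-k}^2 < \infty .
\]
The goal $f \in \Hardy_\beta^k$ is, by \eqref{Sobdef}, the statement $\sum_n (1+n^{2k}) |c_n|^2 \beta_n^2 < \infty$. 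Since $n^{2k} \ge 1$ for $n \ge 1$, the ``$1$'' part of the sum and the $n=0$ term are dominated once the single sum $\sum_n n^{2k} |c_n|^2 \beta_n^2$ is controlled, so it suffices to bound that sum.

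The heart of the matter is to compare the weight $n^{2k} \beta_n^2$ in the target sum with the weight $(n!/(n-k)!)^2 \beta_{n-k}^2$ in $\| D^k f \|_\beta^2$, and this is exactly where the special form \eqref{wtdef} of $\beta$ enters. The recursion in \eqref{wtdef} gives $\beta_{m+1}^2 / \beta_m^2 = (m+1)/(m+\sigma)$ for every $m \ge 0$ (the case $m=0$ being consistent with $\beta_0 = 1$, $\beta_1^2 = 1/\sigma$). Telescoping over $k$ consecutive indices yields, for $n \ge k$,
\[
\frac{\beta_n^2}{\beta_{n-k}^2} = \prod_{j=n-k}^{n-1} \frac{j+1}{j+\sigma} = \frac{n!/(n-k)!}{\prod_{j=n-k}^{n-1}(j+\sigma)} ,
\]
so that
\[
\frac{n^{2k} \beta_n^2}{(n!/(n-k)!)^2 \beta_{n-k}^2} = \frac{n^{2k}}{\bigl(n!/(n-k)!\bigr) \prod_{j=n-k}^{n-1}(j+\sigma)} .
\]
Both $n!/(n-k)! = \prod_{i=0}^{k-1}(n-i)$ and $\prod_{j=n-k}^{n-1}(j+\sigma) = \prod_{i=1}^{k}(n-i+\sigma)$ are products of $k$ factors each asymptotic to $n$, so the denominator equals $n^{2k}(1 + O(1/n))$ and the ratio tends to $1$ as $n \to \infty$.

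With this asymptotic in hand the conclusion is immediate. Choose $N$ so that the displayed ratio lies in $[1/2, 2]$ for all $n \ge N$; then
\[
\sum_{n=N}^{\infty} n^{2k} |c_n|^2 \beta_n^2 \le 2 \sum_{n=N}^{\infty} |c_n|^2 \left( \frac{n!}{(n-k)!} \right)^2 \beta_{n-k}^2 \le 2 \| D^k f \|_\beta^2 < \infty ,
\]
while the finitely many terms with $n < N$ contribute a finite amount because each $c_n$ is finite. Hence $\sum_n n^{2k} |c_n|^2 \beta_n^2 < \infty$, and adding the dominated ``$1$'' part gives $f \in \Hardy_\beta^k$. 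The only real obstacle is the weight comparison in the middle step: it is precisely where the structure \eqref{wtdef} is indispensable, since for a generic weight sequence differentiation need not rescale the $\Hardy_\beta$ weight by a factor comparable to $n^{2k}$. The telescoping identity and the resulting limit value $1$ are therefore the crux of the argument.
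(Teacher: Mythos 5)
Your proof is correct and follows essentially the same route as the paper: compare the coefficient weights $n^{2k}\beta_n^2$ and $\bigl(n!/(n-k)!\bigr)^2\beta_{n-k}^2$ and show their ratio tends to $1$. The only cosmetic difference is that you invoke the exact recursion \eqref{wtdef} to telescope $\beta_n^2/\beta_{n-k}^2$, whereas the paper gets the same asymptotic comparison from the weaker fact that $\beta_{n-j}/\beta_{n-j+1}\to 1$.
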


\begin{proof}
Since $f(z)$ is analytic in $\disk $, both $f$ and $D^kf$ have power series which converge for every $z \in \disk $,
\[f = \sum_{n=0}^{\infty} c_n z^n, \quad  D^kf = \sum_{n=k}^{\infty} n(n-1) \cdots (n-k+1) c_n z^{n-k} .\]
The assumption that $D^kf \in \Hardy _{\beta }$ means that
\[ \sum_{n=k}^{\infty} |n(n-1) \cdots (n-k+1) c_n \beta _{n-k}|^2 < \infty .\] 

Now $\beta _{n-k} = \beta _n \prod_{j=1}^k \beta _{n-j}/\beta _{n-j+1}$, and $\lim_{n \to \infty } \beta _{n-j}/\beta _{n-j+1}= 1$,
so $f \in \Hardy _{\beta }^k$ since
\[ \sum_{n=k}^{\infty} n^{2k} |c_n \beta _{n}|^2 < \infty .\] 

\end{proof}

\begin{lem} \label{switch}
Suppose $f(z) \in \Hardy_{\beta }^k$, $r(z) = (z-z_1) \dots (z-z_M)$, and $j$ is an integer, with $1 \le j \le k$. 
Then there is a $g \in \Hardy _{\beta}^k$ such that $r(z)D^jf = D^jg$, with $\| g \| _k \le C\| f \| _k $. 
\end{lem}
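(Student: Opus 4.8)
The plan is to factor the map $f \mapsto r(z)D^j f$ through the intermediate space $\Hardy_{\beta}^{k-j}$ as a composition of three bounded operations: differentiate $j$ times (dropping the Sobolev index by $j$), multiply by the polynomial $r$ (preserving the index), then antidifferentiate $j$ times (raising the index back by $j$). Writing $h = r(z)D^j f = \sum_{p\ge 0} h_p z^p$, I would take for $g$ the unique $j$-fold antiderivative of $h$ with no terms of degree below $j$,
\[ g = \sum_{p=0}^{\infty} \frac{p!}{(p+j)!}\, h_p\, z^{p+j}. \]
Then $D^j g = h = r(z)D^j f$ holds by construction, so the entire content of the lemma is the chain of norm bounds $\Hardy_{\beta}^k \to \Hardy_{\beta}^{k-j} \to \Hardy_{\beta}^{k-j} \to \Hardy_{\beta}^k$, whose composition yields $\|g\|_k \le C\|f\|_k$. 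Note that only the fact that $r$ is a polynomial enters; the location of the roots $z_1,\dots,z_M$ plays no role.

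First I would verify that $D^j$ maps $\Hardy_{\beta}^k$ boundedly into $\Hardy_{\beta}^{k-j}$. With $f = \sum_n c_n z^n$ one has $D^j f = \sum_p \frac{(p+j)!}{p!} c_{p+j} z^p$, so the $\Hardy_{\beta}^{k-j}$ norm carries the factor $(1+p^{2(k-j)})\,[(p+1)\cdots(p+j)]^2\,\beta_p^2$. Since $[(p+1)\cdots(p+j)]^2$ is comparable to $p^{2j}$ and $j\le k$, the product $(1+p^{2(k-j)})\,p^{2j}$ is comparable to $1+p^{2k}$. The essential analytic input is that the weight ratios are uniformly two-sided bounded: from \eqref{wtdef}, $\beta_{m+1}^2/\beta_m^2 = (m+1)/(m+\sigma)\to 1$, so $\beta_p^2$ and $\beta_{p+j}^2$ are comparable uniformly in $p$ — precisely the estimate already exploited in \lemref{donly}. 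Reindexing by $n=p+j$ then gives $\|D^j f\|_{k-j}^2 \le C\sum_n (1+n^{2k})|c_n|^2\beta_n^2 = C\|f\|_k^2$.

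The second map, multiplication by $r$, is bounded on $\Hardy_{\beta}^{k-j}$ directly by \propref{bndmult}: a polynomial has only finitely many nonzero coefficients, so the summability hypothesis there is trivially satisfied. The third map is the mirror image of the first: antidifferentiation introduces the factor $(p!)^2/((p+j)!)^2$, comparable to $p^{-2j}$, which converts the $\Hardy_{\beta}^{k-j}$ weight back up to the $\Hardy_{\beta}^k$ weight, again using the comparability of $\beta_p^2$ and $\beta_{p+j}^2$. (Membership $g\in\Hardy_{\beta}^k$ can alternatively be certified by \lemref{donly}, since $D^k g = D^{k-j}h \in \Hardy_{\beta}$, but the explicit computation delivers the quantitative bound as well.) I expect the only genuine bookkeeping obstacle to be confirming cleanly that the rising and falling factorial factors $p^{\pm 2j}$ shift the Sobolev exponent by exactly $j$ in each direction, and that the finitely many small-$p$ terms together with the uniform comparability $\beta_n \asymp \beta_{n\pm j}$ are absorbed into the constant $C$.
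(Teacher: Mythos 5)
Your proposal is correct, but it is not the route the paper takes. The paper's proof stays entirely inside $\Hardy_{\beta}^k$: it first produces an antiderivative $F=z\sum_n \tfrac{c_n}{n+1}z^n$ of $f$ with $\|F\|_k\le C\|f\|_k$ (using boundedness of $M_z$ on $\Hardy_{\beta}^k$), observes via the Leibniz rule that $(z-z_1)D^jf=D^j\bigl[(z-z_1)f-jF\bigr]$ since the two terms $jf^{(j-1)}$ cancel, and then inducts on the number of linear factors of $r$, peeling them off one at a time. You instead handle all of $r$ at once by factoring $f\mapsto rD^jf$ through $\Hardy_{\beta}^{k-j}$ as (bounded differentiation) $\circ$ (bounded multiplication by $r$, via \propref{bndmult}) $\circ$ (bounded $j$-fold antidifferentiation), defining $g$ explicitly as the antiderivative of $h=rD^jf$ with no terms of degree below $j$. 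Your two mapping lemmas ($D^j:\Harday^k\to\Hardy^{k-j}$ bounded and its reverse) are not stated in the paper but are correct: the factorial factors $(p+1)\cdots(p+j)$ shift the polynomial weight by exactly $p^{\pm 2j}$, the inequality $n^{2j}\le n^{2k}$ for $j\le k$ absorbs the cross term, and the uniform two-sided comparability $\beta_{n\pm j}\asymp\beta_n$ is exactly the estimate used in \lemref{donly}. (The two constructions yield different $g$'s, differing by a polynomial of degree less than $j$, which is immaterial since the lemma only asserts existence.) What each approach buys: the paper's argument is shorter to verify because it reuses only $M_z$-boundedness and needs no Sobolev-index bookkeeping, at the cost of an induction and a slightly magical cancellation; yours avoids induction, makes transparent why only polynomiality of $r$ matters (as you correctly note, the root locations are irrelevant here, in contrast to \thmref{plowbnd}), and isolates reusable boundedness facts about $D^j$ between the scales $\Hardy_{\beta}^k$. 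One typographical caution: in my previous sentence I mistyped a macro; in your own write-up be sure to use the paper's $\Hardy_{\beta}^{k}$ notation consistently.
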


\begin{proof}
Since multiplication by $z$ is a bounded operator on $\Hardy _{\beta }^k$, the function 
\[F(z) = \sum_{n=0}^{\infty} \frac{c_n}{n+1}z^{n+1} = z   \sum_{n=0}^{\infty} \frac{c_n}{n+1}z^{n}\]
is in $\Hardy _{\beta }^k$, with $F'(z) = f(z)$.  Begin an induction with the case $r(z) = (z-z_1)$.
By the product rule,
\[ (z-z_1)D^j f =  D^j[(z-z_1)f(z) - jF(z)] = (z-z_1)D^jf + jf^{(j-1)} - jf^{(j-1)},\]
and $ \| (z-z_1)f(z) - jF(z) \| _k \le C \| f \| _k$.

Suppose the result holds when there are fewer than $M$ factors.
Then $(z-z_1)\cdots (z-z_M) D^jf = (z-z_1)D^jg $, and the first case may be applied to finish the proof.

\end{proof}

The next result provides a kind of lower bounded for the operator of multiplication by a nonzero polynomial $r(z)$
with no roots on $\partial \disk = \{ |z| = 1\}$.

\begin{thm} \label{plowbnd}
Suppose $j $  and $k $ are nonnegative integers.
If
\[r(z) = \alpha \prod_{m=1}^M (z-z_m), \quad |z_m| \notin \partial \disk , \quad \alpha \not= 0,\]
then there are  positive constants $C_1,C_2$  such that 
\begin{equation} \label{lowbnd1}
\| D^jf \|_{k} \le C_1 \| r(z) D^jf \|_{k} + C_2 \| f \|_{\beta } , 
\end{equation}
for any  $f \in \Hardy _{\beta }^{k+j}$.

\end{thm}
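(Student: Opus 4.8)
The plan is to deduce \eqref{lowbnd1} from the stronger fact that multiplication by $r$ is bounded below on $\Hardy _{\beta }^k$: there is a constant $\delta ' > 0$ with $\| r(z) h \| _k \ge \delta ' \| h \| _k$ for every $h \in \Hardy _{\beta }^k$. Granting this, I apply it to $h = D^jf$. When $f \in \Hardy _{\beta }^{k+j}$ the ratios $\beta _{n-i}/\beta _n$ and $[n \cdots (n-j+1)]/n^j$ are bounded, so $D^jf \in \Hardy _{\beta }^k$ with $\| D^jf \| _k \le C \| f \| _{k+j}$; the lower bound then gives $\| D^jf \| _k \le (\delta ')^{-1} \| r(z)D^jf \| _k$, which is \eqref{lowbnd1} (indeed with $C_2 = 0$). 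The hypothesis $|z_m| \notin \partial \disk $ enters precisely in guaranteeing $\delta ' > 0$.

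The heart of the matter is the weighted-shift structure of $M_z$ on $\Hardy _{\beta }^k$. In the orthonormal basis $e_n = z^n/(\beta _n \sqrt{1 + n^{2k}})$ one has $M_z e_n = w_n e_{n+1}$ with $w_n = (\beta _{n+1}/\beta _n)\sqrt{(1+(n+1)^{2k})/(1+n^{2k})}$. By \eqref{wtdef}, $\beta _{n+1}/\beta _n = \sqrt{(n+1)/(n+\sigma )} \to 1$, so $w_n \to 1$. Writing $S$ for the unweighted shift $e_n \mapsto e_{n+1}$, the difference $M_z - S$ is $S$ composed with the diagonal operator $\mathrm{diag}(w_n - 1)$, whose entries tend to $0$; hence $M_z - S$ is compact and $M_z = S + K$. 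Since $r$ is a polynomial and sums and products of compact operators with bounded operators remain compact, $M_r = r(M_z) = r(S) + K_r$ with $K_r$ compact.

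The main obstacle is showing that $r(S)$ is bounded below, i.e.\ converting the non-vanishing of $r$ on the circle into an operator estimate. One cannot invert $M_r$: roots inside $\disk $ make it non-surjective, and for a root just outside $\disk $ the multiplier $1/r$ need not satisfy the hypothesis of \propref{bndmult}, so the naive inverse is unavailable. Instead I use that $S$ is unitarily equivalent to the shift on $\Hardy ^2$, whence $r(S)$ is unitarily equivalent to the analytic Toeplitz operator $T_r$. With $\delta = \min _{|z|=1} |r(z)| > 0$, the boundary integral representation gives $\| T_r g \| _{\Hardy ^2}^2 = \frac{1}{2\pi } \int _0^{2\pi } |r(e^{i\theta })|^2 |g(e^{i\theta })|^2 \, d\theta \ge \delta ^2 \| g \| _{\Hardy ^2}^2$, so $r(S)$ is bounded below by $\delta $ on $\Hardy _{\beta }^k$.

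Finally I pass from $r(S)$ to $M_r$ using that $M_r$ is injective (multiplication by a nonzero analytic function) together with weak compactness. If no uniform lower bound held, there would be $h_m$ with $\| h_m \| _k = 1$ and $\| r(z)h_m \| _k \to 0$; passing to a weakly convergent subsequence $h_m \rightharpoonup h$, boundedness of $M_r$ forces $M_r h = 0$, hence $h = 0$, so $h_m \rightharpoonup 0$; then compactness of $K_r$ gives $K_r h_m \to 0$ in norm, and $\| r(S)h_m \| _k \le \| r(z)h_m \| _k + \| K_r h_m \| _k \to 0$, contradicting $\| r(S)h_m \| _k \ge \delta $. This produces the desired $\delta ' > 0$ and completes the proof. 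If one prefers to obtain the stated error term $C_2 \| f \| _{\beta }$ directly while staying within the Sobolev scale, the compact embedding $\Hardy _{\beta }^k \hookrightarrow \Hardy _{\beta }$ from \lemref{Sobest} can replace the injectivity step in an otherwise identical contradiction argument.
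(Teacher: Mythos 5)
Your argument is correct, and it takes a genuinely different route from the paper's. The paper works directly with the explicit weighted-$\ell^2$ expressions for $\| D^jf \|_k$ and $\| zD^jf \|_k$: since $\beta_{n+1}/\beta_n \to 1$, the two norms agree up to a factor $1 \pm \epsilon$ on the tail of the coefficient sequence, with the finitely many remaining modes absorbed into $C_2\| f \|_{\beta}$; a single factor $z-z_1$ is then handled by the reverse triangle inequality (separately for $|z_1|>1$ and $|z_1|<1$, choosing $\epsilon$ against $|z_1|$), and the general $r$ by induction on the number of factors via \lemref{switch}. You instead prove the stronger statement that $M_r$ is bounded below on $\Hardy_{\beta}^k$ outright, so that \eqref{lowbnd1} holds with $C_2=0$: you exhibit $M_z$ as a compact perturbation of the unilateral shift (correct, since $w_n \to 1$ by \eqref{wtdef} and a weighted shift with weights tending to zero is compact), identify $r(S)$ with the Toeplitz operator $T_r$ on the classical Hardy space (bounded below by $\min_{|z|=1}|r|>0$ because $r$ has no zeros on the circle), and upgrade ``bounded below plus compact plus injective'' to ``bounded below'' by the weak-compactness contradiction, injectivity of $M_r$ being clear since $r$ is a nonzero analytic function acting on analytic functions. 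Every step checks out, including the preliminary verification that $D^jf \in \Hardy_{\beta}^k$ for $f \in \Hardy_{\beta}^{k+j}$. What the paper's proof buys is elementary self-containedness and constants that are explicit in the distances of the roots $z_m$ from the unit circle; what yours buys is a sharper conclusion, no need for \lemref{switch} or the induction on factors, and a conceptual explanation of the hypothesis (non-vanishing of the symbol on $\partial\disk$ is exactly essential invertibility), at the price of a non-effective constant $\delta'$. Your closing remark about substituting the compact embedding of \lemref{Sobest} for the injectivity step is terser than the rest but also workable.
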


\begin{proof}
With $f = \sum_{n=0}^{\infty} c_nz^n$,  
\[ \| zD^jf \| _{k}^2 =  \sum_{n=0}^{\infty} (1+n^{2k})(n(n-1) \cdots (n- j + 1))^2 |c_n|^2 \beta _{n-j+1}^2\]
while
\[ \| D^jf \| _{k}^2 =  \sum_{n=0} (1 + n^{2k})(n(n-1) \cdots (n- j + 1))^2|c_n|^2\beta _{n-j+1}^2\frac{\beta _{n-j}^2}{\beta _{n-j+1}^2} .\]
Since $\lim_{n \to \infty} \beta _{n+1}/{\beta _n} = 1$, for any $\epsilon $ with $0< \epsilon < 1$ there is an $N$ such that 
\[  (1 - \epsilon)^2 \sum_{n=N}^{\infty}(1 + n^{2k}) (n(n-1) \cdots (n- j + 1))^2|c_n|^2\beta _{n-j+1}^2  \frac{\beta _{n-j}^2}{\beta _{n-j+1}^2} \]
\[ \le \sum_{n=N}^{\infty} (1+n^{2k})(n(n-1) \cdots (n- j + 1))^2  |c_n|^2\beta _{n-j+1}^2 \]
\[  \le (1 + \epsilon)^2 \sum_{n=N}^{\infty}(1 + n^{2k}) (n(n-1) \cdots (n- j + 1))^2|c_n|^2\beta _{n-j+1}^2  \frac{\beta _{n-j}^2}{\beta _{n-j+1}^2} \]
and so a $C > 0 $, depending on $\epsilon $, such that 
\begin{equation} \label{goodest1}
 (1-\epsilon )^2\| D^jf \|_{k }  ^2 \le  \| zD^jf \|_{k}  ^2 + C \| f \|_{\beta }  ^2 \le (\| zD^jf \|_{k}  + C \| f \|_{\beta }  ) ^2 
 \end{equation}
 and 
\begin{equation} \label{goodest2}
 \| zD^jf \|_{k}  ^2  \le ((1 + \epsilon )\| D^jf \|_{k}  + C \| f \|_{\beta }  ) ^2. 
 \end{equation}
 
Suppose $r(z) = (z-z_1)$, with $|z_1| > 1$.  By \eqref{goodest2} the reverse triangle inequality gives
\[  \| (z-z_1)D^jf \|_{k}  \ge |z_1| \| D^jf \|_{k} - \| zD^j f \| _k  \]
\[ \ge  |z_1| \| D^jf \|_{k} - (1 + \epsilon ) \| D^jf \|_{k}  - C \| f \|_{\beta}  .\]
Since $|z_1| > 1$, $\epsilon $ may be chosen with $1 + \epsilon < |z_1 |$, giving 
\[  \| D^jf \|_{k } \le \frac{1}{|z_1| - (1 + \epsilon )} \Bigl ( \| (z-z_1)D^jf \|_{k}  + C \| f \|_{\beta } \Bigr ) .\]

In case $r(z) = (z-z_1)$ with $|z_1| < 1$, \eqref{goodest1} gives
\[  \| (z-z_1)D^jf \|_{k}  + C \| f \|_{\beta }  \ge \| zD^jf \|_{k}   + C \| f \|_{\beta } - |z_1| \| D^jf \|_{k}  \]
\[ \ge  (1-\epsilon ) \| D^jf \|_{k}  - |z_1| \| D^jf \|_{k}  .\]
Choose $\epsilon $ so that $1-\epsilon > |z_1 |$ to get
\[  \| D^jf \|_{k } \le \frac{1}{(1-\epsilon ) - |z_1|} \Bigl ( \| (z-z_1)D^jf \|_{k}  + C \| f \|_{\beta } \Bigr ) .\]

Using \lemref{switch}, if $r_1(z) = \alpha (z-z_1) \cdots (z-z_{M-1})$, there is a $g$ such that 
$D^j g = r_1(z) D^jf $, with $\| g \| _k  \le C \| f \| _k$.
The proof of \eqref{lowbnd1} concludes by induction on the number of factors $M$, with the first case established.
Using the first case, and the assumed validity of the result with fewer than $M$ factors,
\[ \| r(z) D^j f \| _k = \| (z-z_M)r_1(z) D^jf \| _k = \| (z-z_M)D^jg \| _k \ge C_1 \| D^j g \| _k - C_2 \| g \| _{\beta } \]
\[ = C_1 \| r_1(z) D^jf \| _k - C_3 \| f \| _{\beta } \ge C_4 \| D^j f \| _k - C_5  \| f \| _{\beta } .\]

\end{proof}

\subsection{The domain of $\dop _{max}$}

The discussion of operator domains for $\Dreg $ expressions \eqref{dexpress} begins with a lemma similar to one in \cite{StorkB}.
The parameter $\sigma $ is from \eqref{wtdef}.

\begin{lem} \label{ink1}
Suppose $L$ is $\Dreg$ with order $N$.
If  $\sigma \ge 1$, then $\Hardy _{\beta }^N$ is in the domain of the closure of $\dop _{min}$. 
This result holds for all $\sigma > 0$ if $L$ has polynomial coefficients.
\end{lem}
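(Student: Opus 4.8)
The plan is to show directly that each $f \in \Hardy_{\beta}^N$ lies in the domain of $\overline{\dop_{min}}$ by exhibiting a suitable approximating sequence of polynomials. Writing $f = \sum_{n=0}^{\infty} c_n z^n$, I would take the partial sums $f_j = \sum_{n=0}^{j} c_n z^n \in \dommin$. Since the $\Hardy_{\beta}$ norm is given by the weighted coefficient sums, $f_j \to f$ in $\Hardy_{\beta}$ automatically. Because $\dop_{min}$ acts on polynomials by $L$, membership $f \in \dom(\overline{\dop_{min}})$ follows as soon as the images $\dop_{min} f_j = Lf_j$ form a Cauchy sequence in $\Hardy_{\beta}$; the limit is then forced to equal $Lf$, since convergence in $\Hardy_{\beta}$ entails local uniform convergence of functions and derivatives (\propref{bndpe}), consistent with the closedness of $\dop_{max}$. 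Thus the whole lemma reduces to proving $Lf_j \to Lf$ in $\Hardy_{\beta}$, where $L = \sum_{k=0}^N p_k(z) D^k$.

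I would establish this through two ingredients. First, $f \in \Hardy_{\beta}^N$ forces $D^k f \in \Hardy_{\beta}$ for every $0 \le k \le N$: the computation is exactly the one in \lemref{donly}, using $\beta_{n-k}^2 \le C\beta_n^2$ (a consequence of $\beta_{n+1}/\beta_n \to 1$) together with $n^{2k} \le 1 + n^{2N}$ to bound $\|D^k f\|_{\beta}$ by $\|f\|_N$. Consequently $D^k f_j$, being the degree-$(j-k)$ partial sum of the $\Hardy_{\beta}$ function $D^k f$, converges to $D^k f$ in $\Hardy_{\beta}$. Second, I would show that each coefficient $p_k$ acts as a bounded multiplier on $\Hardy_{\beta} = \Hardy_{\beta}^0$. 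Granting both, $p_k D^k f_j \to p_k D^k f$ in $\Hardy_{\beta}$ for each $k$, and summing over $0 \le k \le N$ yields $Lf_j \to \sum_k p_k D^k f = Lf$, completing the argument.

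The main obstacle, and the reason for the hypotheses, is the multiplier boundedness. Under \eqref{wtdef} one has $C_{\beta} = \sup_n \beta_{n+1}/\beta_n = 1$ precisely when $\sigma \ge 1$. In that regime \propref{bndmult} (with $k = 0$) shows $M_{p_k}$ is bounded whenever $\sum_n |\alpha_n| < \infty$, where $p_k(z) = \sum_n \alpha_n z^n$; and since $L$ is $\Dreg$, each $p_k$ is analytic on a disk of radius $R > 1$, so its Taylor coefficients decay geometrically and are absolutely summable. This handles the $\sigma \ge 1$ case. For the second assertion, when the $p_k$ are polynomials, multiplication by $z$ is bounded on $\Hardy_{\beta}$ for every $\sigma > 0$ by \propref{bndop} (the ratios $\beta_{n+1}/\beta_n$ converge, hence are bounded), so each $M_{p_k}$, a finite polynomial in $M_z$, is bounded with no restriction on $\sigma$. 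The delicate point to keep in mind is that for $\sigma < 1$ one has $C_{\beta} = 1/\sqrt{\sigma} > 1$, so \propref{bndmult} would demand $\sum_n |\alpha_n| C_{\beta}^n < \infty$, i.e.\ analyticity on a disk of radius exceeding $C_{\beta}$, which is strictly more than $\Dreg$ provides; this is exactly why the analytic-coefficient conclusion is confined to $\sigma \ge 1$.
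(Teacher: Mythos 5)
Your proposal is correct and follows essentially the same route as the paper: your partial sums are precisely the Taylor polynomials the paper uses, the convergence $D^k f_j \to D^k f$ in $\Hardy_\beta$ (via the $\Hardy_\beta^N$ norm) is the paper's first step, and the multiplier boundedness is obtained exactly as in the paper — \propref{bndmult} with $C_\beta = 1$ when $\sigma \ge 1$ for coefficients analytic on $\overline{\disk}$, and boundedness of polynomials in $M_z$ for arbitrary $\sigma > 0$ in the polynomial-coefficient case.
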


\begin{proof}
If $f(z) \in \Hardy _{\beta }^N$, then the $m$-th order Taylor polynomials $t_m(z)$ for $f(z)$ centered at zero converge to $f$ in the $ \Hardy _{\beta }^N$ norm.
Thus for $j = 0,1,\dots ,N$ the derivatives $D^jt_m$ converge to $D^jf$ in the $\Hardy _{\beta }$ norm.
$C_{\beta } = 1$ in \propref{bndmult} since $\sigma \ge 1$.  Also,  
the coefficients $p_k(z)$ of $L$ are analytic on the closed disk $\overline{\disk}$, so the Taylor series for the coefficients converge absolutely for $|z| \le 1$.
By \propref{bndmult} multiplication by $p_k(z)$ acts as a bounded operator on $\Hardy _{\beta }$, and thus $L t_m(z)$ converges to $Lf$,
putting $f$ in the domain of the closure of $\dop _{min}$.

In case $L$ has polynomial coefficients, multiplication by $p_k(z)$ acts as a bounded operator for all  $\sigma > 0$.

\end{proof}

As a densely defined operator on a Hilbert space, $\dop _{min}$ has an adjoint operator $\dop _{min}^*$, whose graph is 
the set of all pairs $(g_1,g_2) \in \Hardy _{\beta} \oplus \Hardy _{\beta}$ such that
\[\langle f,g_2 \rangle _{\beta } = \langle \dop _{min}f,g_1 \rangle _{\beta }\]
for all polynomials $f$.  Recall that $\Dalg$ is the set of differential expressions $L$ with a formal adjoint expression $L^+$,
and that all $L \in \Dalg$ have polynomial coefficients by \thmref{adjform}.
The next result is similar to one in \cite{Villone1}, where $\dop _{min}$ is assumed to be symmetric.

\begin{thm} \label{adjoints}
If $L \in \Dalg $, then $ \dop _{min}^* = \dop ^+_{max} $.
\end{thm}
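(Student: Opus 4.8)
The plan is to establish the two operator inclusions $\dop ^+_{max} \subseteq \dopmin ^*$ and $\dopmin ^* \subseteq \dop ^+_{max}$ separately, both resting on a single coefficient identity: for any polynomial $f$ and any $g \in \Hardy _\beta$,
\[\langle \dopmin f, g \rangle _\beta = \langle f, L^+ g \rangle _\beta .\]
Since $L^+ \in \Dalg$ has polynomial coefficients by \thmref{adjform}, and every $g \in \Hardy _\beta$ is analytic on $\disk$ by \propref{bndpe} (the weights satisfy \eqref{betabnd} with $r=1$), the function $L^+ g$ is a well-defined analytic function on $\disk$. The right-hand side is then a finite sum of its Taylor coefficients paired against those of the polynomial $f$, so it is meaningful even before one knows that $L^+ g \in \Hardy _\beta$; this is what lets the same identity serve both inclusions.

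To prove the identity I would expand $f = \sum _m a_m e_m$ (a finite sum) and $g = \sum _n \langle g, e_n \rangle _\beta e_n$ in the orthonormal basis \eqref{orthobasis}. Because each $\dopmin e_m = L e_m$ is again a polynomial, $\langle \dopmin f, g \rangle _\beta$ reduces to the finite double sum $\sum _{m,j} a_m \overline{\langle g, e_j \rangle _\beta}\,\langle L e_m, e_j \rangle _\beta$. The adjoint-pair relation on basis vectors, $\langle L e_m, e_j \rangle _\beta = \overline{\langle L^+ e_j, e_m \rangle _\beta}$, rewrites this as $\sum _m a_m \,\overline{\sum _j \langle g, e_j \rangle _\beta \,\langle L^+ e_j, e_m \rangle _\beta}$. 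The only genuinely analytic step is to recognize the inner sum as $\langle L^+ g, e_m \rangle _\beta$: for fixed $m$ the band condition \eqref{melbnd} (applied to $L^+$, which has order at most $N$) leaves only the finitely many $j$ with $|j-m| \le N$, and truncating $g$ to its Taylor polynomials $g_K$, \propref{bndpe} gives uniform convergence of $g_K$ together with all derivatives on compact subsets of $\disk$, hence $L^+ g_K \to L^+ g$ uniformly on compacta and convergence of the attached coefficient functionals. This stabilization identifies the inner sum with $\langle L^+ g, e_m \rangle _\beta$ and yields the claimed identity.

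The two inclusions then follow quickly. If $g \in \dom ^+_{max}$, so $L^+ g \in \Hardy _\beta$, the identity reads $\langle \dopmin f, g \rangle _\beta = \langle f, L^+ g \rangle _\beta$ for all polynomials $f$, which by definition of the Hilbert space adjoint places $g$ in the domain of $\dopmin ^*$ with $\dopmin ^* g = L^+ g$; hence $\dop ^+_{max} \subseteq \dopmin ^*$. Conversely, suppose $g$ lies in the domain of $\dopmin ^*$ with $\dopmin ^* g = h$, so $\langle \dopmin f, g \rangle _\beta = \langle f, h \rangle _\beta$ for every polynomial $f$. Comparing with the identity gives $\langle f, h \rangle _\beta = \langle f, L^+ g \rangle _\beta$ for all polynomial $f$; testing against $f = e_m$ yields $\langle L^+ g, e_m \rangle _\beta = \langle h, e_m \rangle _\beta$ for every $m$. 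The \emph{main obstacle} is precisely here: a priori $L^+ g$ is only an analytic function with no summability control. But $h \in \Hardy _\beta$ has square-summable coefficients, and the coefficient matching transfers this to the Taylor coefficients of $L^+ g$, forcing $L^+ g \in \Hardy _\beta$ with $L^+ g = h$. Thus $g \in \dom ^+_{max}$ and $\dopmin ^* g = \dop ^+_{max} g$, giving $\dopmin ^* \subseteq \dop ^+_{max}$ and hence the asserted equality.
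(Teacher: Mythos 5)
Your proposal is correct and follows essentially the same route as the paper: both rest on the single identity $\langle \dopmin f, g\rangle_\beta = \langle f, L^+g\rangle_\beta$ for polynomial $f$ and arbitrary $g \in \Hardy_\beta$, established by truncating $g$ to a polynomial $g_M$ (the paper's orthogonal projection, your Taylor truncation) and observing via the band structure that $\langle f, L^+g_M\rangle_\beta$ stabilizes at the coefficient pairing with $L^+g$, after which the two inclusions follow exactly as you describe. Your write-up merely supplies more detail than the paper on why the stabilized sum is identified with $\langle L^+g, e_m\rangle_\beta$ and on the coefficient-matching step that forces $L^+g = \dopmin^*g \in \Hardy_\beta$.
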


\begin{proof}
For any $f \in \dom _{min}$, the function  $\dop _{min}f = \sum_{j=0}^m b_jz^j $ is also a polynomial.  
Suppose $g= \sum_{j = 0}^{\infty}c_jz^j$ is in the domain of $\dop _{min}^*$.
Since the powers $z^j$ are an orthogonal basis for $\Hardy _{\beta }$, if $M > m + N$ then the 
orthogonal projection $g_M(z) = \sum_{j=0}^M c_jz^j$ of $g$ onto the span of $1,\dots , z^M$
is a polynomial which satisfies
\begin{equation} \label{Hadjoint}
\langle \dop _{min} f,g \rangle = \langle \dop _{min}f,g_M \rangle = \langle f,L^+ g_M \rangle = \langle f,L^+g \rangle. 
\end{equation}
Since the polynomials are dense in $\Hardy _{\beta }$, $\dop _{min}^*g= L^+ g$, and $g$ is in the domain of $\dop _{max}^+$.
In addition, \eqref{Hadjoint} shows that any $g$  in the domain of $\dop _{max}^+$ is in the domain of $\dop _{min}^*$.

\end{proof}

\begin{thm} \label{dregopsp}
Suppose $L = \sum_{k=0}^N p_k(z) D^k $ is $\Dreg $ of order $N$ with polynomial coefficients.
Then $\dom _{max} = \Hardy _{\beta }^N$ and $\dop _{max}$ is a closed operator on $\Hardy _{\beta }$.
\end{thm}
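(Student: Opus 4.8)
The plan is to prove the two set inclusions separately, the nontrivial content being $\dom _{max}\subseteq \Hardy _{\beta }^N$; closedness of $\dop _{max}$ has already been recorded in the proposition following \eqref{domdef}, so for this theorem it need only be invoked. For the inclusion $\Hardy _{\beta }^N\subseteq \dom _{max}$, I would first note that $f=\sum c_nz^n\in \Hardy _{\beta }^N$ forces $D^jf\in \Hardy _{\beta }$ for every $j\le N$: the coefficients of $D^jf$ carry a factor $n(n-1)\cdots (n-j+1)=O(n^j)$ and a weight $\beta _{n-j}$, and since $\beta _{n-j}/\beta _n\to 1$ (as in \lemref{donly}) the series is dominated by $\sum (1+n^{2N})|c_n|^2\beta _n^2<\infty$. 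Because each $p_k(z)$ is a polynomial, multiplication by $p_k$ is bounded on $\Hardy _{\beta }$ by \propref{bndop} and the algebra structure, so $p_kD^kf\in \Hardy _{\beta }$ and hence $Lf\in \Hardy _{\beta }$. This also shows $L$ maps $\Hardy _{\beta }^N$ boundedly into $\Hardy _{\beta }$, with $\|Lf\|_{\beta }\le C\|f\|_N$.

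The heart of the matter is the reverse inclusion, for which I would first establish the a priori elliptic estimate
\begin{equation} \label{apriori}
\|D^Nf\|_{\beta }\le C(\|Lf\|_{\beta }+\|f\|_{\beta }),\qquad f\in \Hardy _{\beta }^N.
\end{equation}
Writing $p_N(z)D^Nf=Lf-\sum_{k<N}p_k(z)D^kf$ and invoking \thmref{plowbnd} with $j=N$, $k=0$, $r=p_N$ — legitimate because $f\in \Hardy _{\beta }^N$ and, since $L$ is $\Dreg$, $p_N$ is a polynomial with no roots on $\partial \disk$ — gives $\|D^Nf\|_{\beta }\le C_1\|p_ND^Nf\|_{\beta }+C_2\|f\|_{\beta }$. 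The lower-order terms are controlled by $\|p_kD^kf\|_{\beta }\le C\|f\|_k$ and then absorbed using the interpolation inequality of \lemref{Sobest}, $\|f\|_k\le \epsilon \|f\|_N+C_{\epsilon }\|f\|_{\beta }$, together with the equivalence $\|f\|_N\asymp \|D^Nf\|_{\beta }+\|f\|_{\beta }$; choosing $\epsilon $ small and absorbing the $\|D^Nf\|_{\beta }$ term yields \eqref{apriori}. In particular, on $\Hardy _{\beta }^N$ the graph norm $\|f\|_{\beta }+\|Lf\|_{\beta }$ is equivalent to the $\Hardy _{\beta }^N$ norm, so $\Hardy _{\beta }^N$ is complete, hence closed, as a subspace of $\dom _{max}$ in the graph norm.

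To finish I would show that every $f\in \dom _{max}$ lies in this closed subspace by approximating $f$ with the polynomial truncations $f_M=\sum_{n\le M}c_nz^n\in \Hardy _{\beta }^N$, which converge to $f$ in $\Hardy _{\beta }$, and applying \eqref{apriori} to the differences $f_M-f_{M'}$ so as to make $\{f_M\}$ Cauchy in $\Hardy _{\beta }^N$; then $f\in \Hardy _{\beta }^N$ by completeness and \lemref{donly}. The feature that makes this plausible is that $L$ is banded: by \eqref{melbnd} the matrix elements $\langle Le_m,e_n\rangle _{\beta }$ vanish for $|m-n|>N$, so $Lf_M$ agrees with $Lf$ in every coefficient of index $n\le M-N$ and vanishes for $n>M+N$. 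Hence $L(f_M-f_{M'})$ is carried by a bounded window of frequencies near $M$ and $M'$ together with the tail $\sum_{n>M}|\langle Lf,e_n\rangle _{\beta }|^2$, which tends to $0$ because $Lf\in \Hardy _{\beta }$.

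I expect the genuine obstacle to be precisely this boundary window. The terms dropped in passing from $Lf$ to $Lf_M$ contribute a quantity of order $M^{2N}\sum_{m\sim M}|c_m|^2\beta _m^2$ — exactly a top-order ($\Hardy _{\beta }^N$) tail of $f$ — and the crude bandedness bound $|\langle Le_m,e_n\rangle _{\beta }|\le Cm^N$ places this at the borderline of absorbability, where a naive truncation (or dilation $f(sz)$) argument stalls in a bootstrap: one cannot conclude the window is negligible from $f\in \Hardy _{\beta }$ alone. The $\Dreg $ hypothesis must be used structurally here. Since $p_N$ has no zeros on $\partial \disk $, its Fourier symbol on the circle is bounded below, which is exactly what upgrades the crude matrix-element bound to the elliptic lower bound underlying \thmref{plowbnd} and converts $Lf\in \Hardy _{\beta }$ into square-summability of $\{n^Nc_n\beta _n\}$. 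Concretely I would combine \eqref{apriori} for $f_M$ with the banded representation to bound the boundary window by $\|Lf\|_{\beta }$ plus a portion of $\|D^Nf_M\|_{\beta }$ that the symbol lower bound renders absorbable, obtaining $\sup_M\|D^Nf_M\|_{\beta }<\infty$; coefficientwise convergence with uniformly bounded $\Hardy _{\beta }$ norms then gives $D^Nf\in \Hardy _{\beta }$, completing $\dom _{max}\subseteq \Hardy _{\beta }^N$.
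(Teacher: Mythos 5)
Your first two steps are sound and agree with the paper: $\Hardy_\beta^N \subset \dom_{max}$ because multiplication by the polynomial coefficients is bounded, closedness of $\dop_{max}$ is the proposition following \eqref{domdef}, and the a priori estimate $\|D^Nf\|_\beta \le C(\|Lf\|_\beta + \|f\|_\beta)$ for $f \in \Hardy_\beta^N$ is exactly the combination of \thmref{plowbnd} and \lemref{Sobest} that the paper itself uses later, in the proof of \thmref{regsa}. The gap is where you yourself locate it: the passage from that estimate, valid only on $\Hardy_\beta^N$, to the inclusion $\dom_{max}\subseteq\Hardy_\beta^N$. Applying the estimate to the truncations $f_M$ requires a bound on $\|Lf_M\|_\beta$, and the discrepancy between $Lf_M$ and the corresponding truncation of $Lf$ is carried by $O(1)$ many frequencies near $M$, with size comparable to $\bigl(\sum_{m\sim M}m^{2N}|c_m|^2\beta_m^2\bigr)^{1/2}$. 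That quantity is bounded by a fixed constant times $\|D^Nf_M\|_\beta$, not by a small multiple of it, so there is nothing with which to absorb it; and $f\in\Hardy_\beta$ gives only $|c_m|\beta_m\to0$, which says nothing about $m^N|c_m|\beta_m$, even along a subsequence of $M$. The sentence ``bound the boundary window by $\|Lf\|_\beta$ plus a portion of $\|D^Nf_M\|_\beta$ that the symbol lower bound renders absorbable'' is the entire content of the hard direction, and as written it is an aspiration rather than an argument. (A smaller point: \eqref{melbnd} was derived for $L\in\Dalg$, i.e.\ under the hypothesis that a formal adjoint of order $N$ exists; a general $\Dreg$ expression with polynomial coefficients is still banded, but with upper bandwidth $\max_k(\deg p_k-k)$, which need not equal $N$.)

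The paper closes this gap by a mechanism that never truncates $f$: induction on the order $N$. The product rule rewrites $L=b_0(z)+D\sum_{k=1}^Nb_k(z)D^{k-1}$ with $b_N=p_N$; then $Lf-b_0f\in\Hardy_\beta$ combined with \lemref{donly} (a termwise computation requiring only that $f$ be analytic in $\disk$) gives $\sum_{k\ge1}b_kD^{k-1}f\in\Hardy_\beta^1\subset\Hardy_\beta$, so the induction hypothesis applied to the order-$(N-1)$ $\Dreg$ expression $\sum_{k\ge1}b_kD^{k-1}$ yields $f\in\Hardy_\beta^{N-1}$; a second application of the hypothesis, to $\sum_{k\ge1}p_kD^{k-1}$ acting on $Df$, yields $Df\in\Hardy_\beta^{N-1}$, hence $f\in\Hardy_\beta^N$. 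If you wish to keep your architecture you must replace the absorption claim by some such regularity bootstrap; as it stands the proposal does not establish $\dom_{max}\subseteq\Hardy_\beta^N$.
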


\begin{proof}
Since multiplication by $p_k(z)$ is bounded on $\Hardy _{\beta }$, $\Hardy _{\beta }^N \subset \dom _{max}$ for $L$.

The result holds trivially if $N=0$, since $f \in \Hardy _{\beta}^0$ if $f \in \dom _{max}$. 
The proof proceeds by induction on the order $N \ge 1$.
If $N=1$, then $L = p_1(z)D + p_0(z)$.  Since $p_0(z)f \in \Hardy _{\beta}$, we have
$p_1(z)Df \in \Hardy _{\beta}$, and $f \in \Hardy _{\beta }^{1}$ by \thmref{plowbnd}.
Suppose $N \ge 2$ and the result holds for $K < N$.  
  
Using the product rule, the expression $L$ can be written in the form $L = b_0(z) + D\sum_{k=1}^N b_k(z) D^{k-1}$, with $b_N(z) = p_N(z)$.
Since $Lf - b_0(z)f =  D\sum_{k=1}^N b_k(z) D^{k-1}f \in \Hardy _{\beta }$,
\lemref{donly} implies $\sum_{k=1}^N b_k(z) D^{k-1}f \in \Hardy _{\beta }^1$.
By the induction hypothesis, $f \in \Hardy _{\beta }^{N-1}$.
Finally, since $Df \in \Hardy _{\beta }$ and 
\[ Lf - p_0(z)f = [\sum_{k=1}^N p_k(z) D^{k-1}]Df \in \Hardy_{\beta },\]
the induction hypothesis gives $Df \in \Hardy _{\beta }^{N-1}$, so $f \in \Hardy _{\beta }^N$.

By \lemref{ink1}, $\dop _{max}$ is the closure of $\dop _{min}$.
\end{proof}

A result similar to \thmref{dregopsp} holds for more general coefficients when $\sigma \ge 1$.

\begin{thm} \label{dregops}
If $\sigma \ge 1$ and $L = \sum_{k=0}^N p_k(z) D^k $ is $\Dreg $ of order $N$, then $\dom _{max} = \Hardy _{\beta }^N$. 
$\dop _{max}$ is a closed operator on $\Hardy _{\beta }$.
\end{thm}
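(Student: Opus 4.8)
The plan is to follow the proof of \thmref{dregopsp} essentially verbatim, isolating the two places where the polynomial hypothesis on the coefficients was used and showing that the assumption $\sigma \ge 1$ lets us dispense with it. The crucial gain from $\sigma \ge 1$ is that $C_{\beta} = 1$, so \propref{bndmult} (with $k=0$) shows that multiplication by \emph{any} function analytic on $\overline{\disk }$ is bounded on $\Hardy _{\beta }$: such a function has radius of convergence exceeding $1$, hence an absolutely convergent Taylor series on $\overline{\disk }$, i.e. $\sum_n |\alpha _n| < \infty $. In particular each $M_{p_k}$ is bounded, which at once gives the easy inclusion $\Hardy _{\beta }^N \subseteq \dom _{max}$: for $f \in \Hardy _{\beta }^N$ one has $D^kf \in \Hardy _{\beta }$ for $k \le N$, so $p_k(z)D^kf \in \Hardy _{\beta }$ and $Lf \in \Hardy _{\beta }$.

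For the reverse inclusion $\dom _{max} \subseteq \Hardy _{\beta }^N$ I would induct on $N$, exactly as in \thmref{dregopsp}. The only step that genuinely uses that the leading coefficient is a polynomial is the base case $N=1$, where \thmref{plowbnd} was invoked. Here $L = p_1(z)D + p_0(z)$ with $p_1$ analytic on $\overline{\disk }$ and nonvanishing on $\partial \disk $; I would factor $p_1(z) = r(z)h(z)$, where $r(z) = \prod_m (z-z_m)$ is the polynomial carrying exactly the (finitely many) zeros of $p_1$ inside $\disk $, and $h(z) = p_1(z)/r(z)$ is analytic and \emph{nonvanishing} on $\overline{\disk }$. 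Then $1/h$ is analytic on a neighborhood of $\overline{\disk }$, so $M_{1/h}$ is bounded on $\Hardy _{\beta }$. Since $p_0 f \in \Hardy _{\beta }$ we get $p_1 Df = Lf - p_0 f \in \Hardy _{\beta }$, and applying $M_{1/h}$ gives $r(z)Df \in \Hardy _{\beta }$. As $r$ is a polynomial with no roots on $\partial \disk $, \thmref{plowbnd} (with $j=1$, $k=0$, applied to truncations and passing to the limit if needed) yields $Df \in \Hardy _{\beta }$, i.e. $f \in \Hardy _{\beta }^1$.

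The inductive step $N \ge 2$ is then unchanged. Writing $L = b_0(z) + D\sum_{k=1}^N b_k(z)D^{k-1}$ via the product rule, the coefficients $b_k$ are again analytic on $\overline{\disk }$ (polynomial combinations of derivatives of the $p_k$) with $b_N = p_N$, so both reduced expressions $\sum_{k=1}^N b_k D^{k-1}$ and $\sum_{k=1}^N p_k D^{k-1}$ are $\Dreg$ of order $N-1$. From $Lf - b_0 f = D[\sum_{k=1}^N b_k D^{k-1}f] \in \Hardy _{\beta }$ and \lemref{donly}, $\sum_{k=1}^N b_k D^{k-1}f \in \Hardy _{\beta }^1 \subseteq \Hardy _{\beta }$, so the induction hypothesis forces $f \in \Hardy _{\beta }^{N-1}$; then $Df \in \Hardy _{\beta }$ and $[\sum_{k=1}^N p_k D^{k-1}]Df = Lf - p_0 f \in \Hardy _{\beta }$, so a second use of the induction hypothesis gives $Df \in \Hardy _{\beta }^{N-1}$ and hence $f \in \Hardy _{\beta }^N$. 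Closedness of $\dop _{max}$ follows either from the general closedness proposition for $\dom _{max}$, or by combining $\dom _{max} = \Hardy _{\beta }^N$ with \lemref{ink1} to identify $\dop _{max}$ with the closure of $\dop _{min}$.

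The main obstacle is precisely the base case: one cannot feed the analytic $p_1$ directly into \thmref{plowbnd}, and the factorization $p_1 = rh$ with $1/h$ a bounded multiplier is what repairs this. The delicate point is verifying that $h$ is nonvanishing on all of $\overline{\disk }$ (not merely on $\partial \disk $), so that $1/h$ is a legitimate $\Hardy _{\beta }$ multiplier; this is exactly where analyticity of $p_1$ on the closed disk, $\Dreg$-ness (finitely many interior zeros, none on the circle), and $\sigma \ge 1$ are jointly used.
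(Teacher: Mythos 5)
Your proposal is correct and follows essentially the same route as the paper: the paper likewise exploits $C_{\beta }=1$ to factor the leading coefficient as $p_N(z)=r(z)q(z)$ with $r$ a polynomial carrying the interior zeros and $1/q$ a bounded multiplier, thereby reducing to the polynomial case and rerunning the induction of \thmref{dregopsp}. The only (immaterial) difference is organizational — the paper performs the division by $q$ once for the full order-$N$ expression before starting the induction, whereas you defer it to the base case $N=1$.
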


\begin{proof}
Since the coefficients $p_k(z)$ are analytic on $\overline{\disk }$, and $C_{\beta } = 1$ if $\sigma \ge 1$, 
multiplication by $p_k(z)$ is bounded on $\Hardy _{\beta }$ and $\Hardy _{\beta }^N \subset \dom _{max}$ for $L$.

Assume $f \in \Hardy _{\beta }$ and $Lf \in \Hardy _{\beta }$.  
If $z_1, \dots ,z_K$ are the roots of $p_N(z)$ in $\disk $, listed with multiplicity, let $r(z) =  (z-z_1) \cdots (z-z_K)$.
The leading coefficient can be factored as $p_N(z) = r(z)q(z)$, where
$q(z)$ is analytic with no zeros on $\overline{\disk }$.  
Since $C_{\beta } = 1$ and $1/q(z)$ has an absolutely convergent Taylor series on $\{ |z | \le 1 \}$, multiplication by $1/q(z)$ is bounded on $\Hardy _{\beta }$
by \lemref{bndmult}.  Thus $(1/q(z))Lf \in \Hardy _{\beta }$, and $L$ can be assumed to have leading coefficient $p_N(z) = r(z)$.

The induction argument from the proof of \thmref{dregopsp} may now be applied again.

\end{proof}

Suppose $L$ is a formally symmetric $\Dreg$ expression.  The next result shows that $\dop _{max}$ is self-adjoint, 
with compact resolvent $R(\lambda ) = (\dop _{max} - \lambda I)^{-1}$.

\begin{thm} \label{regsa}
Suppose $L = \sum_{k=0}^N p_k(z)D^k$  is $\Dreg$ and formally symmetric on $\Hardy _{\beta }$, with order $N \ge 1$.
Then $\dop _{max}$ is the closure of $\dop _{min}$, and $\dop _{max}$ is self adjoint.
The resolvent $R(\lambda ): \Hardy _{\beta } \to \Hardy _{\beta }^N$ is uniformly bounded on compact subsets of the
resolvent set, and $R(\lambda ): \Hardy _{\beta } \to \Hardy _{\beta }$ is compact.
\end{thm}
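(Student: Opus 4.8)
The plan is to establish three things in turn: that $\dop_{max}$ coincides with the closure of $\dopmin$ and is self adjoint; an a priori estimate bounding the $\Hardy_{\beta}^N$ norm by the graph norm of $\dop_{max}$; and finally the resolvent bounds and compactness, which follow from the estimate together with the compact embedding of \lemref{Sobest}.

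First I would dispose of the self adjointness, which is essentially a bookkeeping consequence of the earlier results. Since $L$ is formally symmetric, $L = L^+$, and by \thmref{adjform} its coefficients are polynomials; \thmref{dregopsp} then gives $\dom_{max} = \Hardy_{\beta}^N$, shows $\dop_{max}$ is closed, and identifies $\dop_{max}$ as the closure $\overline{\dopmin}$. Because $L = L^+$, the maximal operators for $L$ and $L^+$ coincide, so \thmref{adjoints} reads $\dopmin^* = \dop^+_{max} = \dop_{max}$. Combining this with $\dop_{max} = \overline{\dopmin}$ gives $\dopmin^* = \overline{\dopmin}$, which is precisely essential self adjointness of $\dopmin$; hence its closure $\dop_{max}$ is self adjoint. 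In particular the resolvent set is nonempty, containing every non-real $\lambda$.

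The heart of the argument is the estimate
\[ \| f \|_N \le C(\| \dop_{max} f \|_{\beta} + \| f \|_{\beta}), \quad f \in \Hardy_{\beta}^N. \]
To prove it I would isolate the top order term, writing $p_N(z) D^N f = Lf - \sum_{k=0}^{N-1} p_k(z) D^k f$. Since $L$ is $\Dreg$, the leading coefficient $p_N(z)$ is a polynomial with no zeros on $\partial \disk$, so \thmref{plowbnd} (with $j = N$, $k = 0$, $r = p_N$) bounds $\| D^N f\|_{\beta}$ by $C_1 \| p_N D^N f\|_{\beta} + C_2 \| f\|_{\beta}$. The lower order terms $\| p_k D^k f\|_{\beta}$ are then controlled using boundedness of multiplication by the analytic coefficients (\propref{bndmult}) together with the comparison of $\| D^k f\|_{\beta}$ with the Sobolev norm $\| f\|_k$ that comes from $\beta_{n-k}/\beta_n \to 1$; the same comparison shows $\| f\|_N \le C(\| D^N f\|_{\beta} + \| f\|_{\beta})$. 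Each intermediate norm $\| f\|_k$ with $k < N$ is absorbed by the interpolation inequality of \lemref{Sobest}, $\| f\|_k \le \epsilon \| f\|_N + C_\epsilon \| f\|_{\beta}$, choosing $\epsilon$ small enough to move the resulting multiple of $\| f\|_N$ to the left side. I expect this absorption step — tracking the weight shifts between $\| D^k f\|_{\beta}$ and $\| f\|_k$ and verifying that the top order contributions are genuinely comparable up to $\| f\|_{\beta}$ — to be the main obstacle, since it is where the $\Dreg$ hypothesis does its essential work.

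With the estimate in hand the resolvent statements are routine. For $\lambda$ in the resolvent set and $g \in \Hardy_{\beta}$, the function $f = R(\lambda) g$ lies in $\dom_{max} = \Hardy_{\beta}^N$ and satisfies $\dop_{max} f = g + \lambda f$, so the estimate yields
\[ \| R(\lambda) g\|_N \le C\bigl(\| g\|_{\beta} + (|\lambda| + 1)\| R(\lambda)\| \, \| g\|_{\beta}\bigr). \]
Since $\lambda \mapsto \| R(\lambda)\|$ is bounded on compact subsets of the resolvent set, $R(\lambda) : \Hardy_{\beta} \to \Hardy_{\beta}^N$ is uniformly bounded there. Finally I would factor $R(\lambda) : \Hardy_{\beta} \to \Hardy_{\beta}$ as the bounded map $R(\lambda) : \Hardy_{\beta} \to \Hardy_{\beta}^N$ followed by the inclusion $\Hardy_{\beta}^N \hookrightarrow \Hardy_{\beta}$, which is compact by the second assertion of \lemref{Sobest} (with $j = 0 < N$). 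A composition of a bounded operator with a compact one is compact, completing the proof.
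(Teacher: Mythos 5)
Your proposal is correct and follows essentially the same route as the paper: self-adjointness via $\dopmin^* = \dop^+_{max} = \dop_{max} = \overline{\dopmin}$ (using Theorem \ref{adjoints}, Theorem \ref{dregopsp} and Lemma \ref{ink1}), the a priori bound on $\| D^N f\|_{\beta}$ from Theorem \ref{plowbnd} with lower-order terms absorbed by Lemma \ref{Sobest}, and compactness of $R(\lambda)$ from the compact embedding $\Hardy_{\beta}^N \hookrightarrow \Hardy_{\beta}$. The only differences are cosmetic (you phrase the estimate in the $\| \cdot \|_N$ norm and track the $\lambda$-dependence through $\| g + \lambda R(\lambda)g\|_{\beta}$ a bit more explicitly than the paper does).
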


\begin{proof}

The minimal operator $\dop _{min}$ is symmetric on $\Hardy _{\beta }$, with $ \dop _{min}^* = \dop _{max}$ by \thmref{adjoints}.  
Let $\widetilde{\dop _{min}}$ denote the closure of $\dop _{min}$. Then $\widetilde{\dop _{min}}$ is symmetric \cite[p. 269]{Kato} with $ \widetilde{\dop _{min}}^* = \dop _{max}$
\cite[p. 168]{Kato}.  Now $\dom _{max} = \Hardy _{\beta }^N$ by \thmref{dregopsp} and 
$\Hardy _{\beta }^N$ is in the domain of $ \widetilde{\dop _{min}}$ by \lemref{ink1}.
Thus $ \widetilde{\dop _{min}}$ is an extension of $\dop _{max}$, and $ \widetilde{\dop _{min}} = \dop _{max}=  \widetilde{\dop _{min}}^* $.

For $f \in \Hardy _{\beta }^N$, the reverse triangle inequality gives
\[ (L - \lambda I)f \ge \| p_N(z)D^N f \| - \sum_{k=0}^{N-1} \| p_k(z)D^k f \| ,\]
so  \thmref{plowbnd} implies the existence of constants $C_k$, which may be chosen uniformly for $\lambda $ in compact subsets of the
resolvent set, such that    
\[ \| D^Nf \| _{\beta } \le C_N \| p_N(z)D^N f \| _{\beta } + C_N \| f \| _{\beta }  \le C_N\| (L - \lambda I ) f \| _{\beta }  + \sum_{k=0}^{N-1} C_k \| D^k f\| _{\beta }  \]
By \lemref{Sobest} the terms $\| D^k f \| _{\beta }$ may be replaced by $\epsilon \| D^N f \| _{\beta } + C \| f \| _{\beta } $ for any $\epsilon > 0$, so 
with a new constant $C$, 
\[ \| D^Nf \|  _{\beta } \le C  \| (L - \lambda I ) f \| _{\beta } + C \| f \| _{\beta } .\]
Taking $f = R(\lambda ) g$ for $g \in \Hardy _{\beta }$,
\[ \| D^N R(\lambda )g \|  _{\beta } \le C  \| g \| _{\beta } + C \| R(\lambda )g \| _{\beta } ,\]
so the resolvent $R(\lambda )$ of $\dop _{max}$ is a bounded operator from $\Hardy _{\beta }$ to $\Hardy _{\beta}^N$.
By \lemref{Sobest} the image of the unit ball has compact closure.

\end{proof}

\subsection{Fredholm index}

Suppose $T$ is a closed operator on $\Hardy _{\beta }$, while $A$ is another operator on $\Hardy _{\beta }$
whose domain includes the domain of $T$.  Recall \cite[p. 194]{Kato} that $A$ is relatively compact with respect to $T$ if 
for every bounded sequence $\{u_n \}$ in the domain of $T$, with $\{ Tu_n \}$ also bounded, the sequence $\{ Au_n \}$ 
has a convergent subsequence.  The operator $T+A$ with the domain of $T$ will be closed.

Also recall, \cite[p. 230]{Kato} that a closed operator $T$ is Fredholm if $T$ has a finite dimensional null space and a
closed range of finite codimension.  The index of a Fredholm operator is $ind(T) = dim (null \ T) - codim (range \ T) $. 
If $A$ is relatively compact with respect to the Fredholm operator $T$, then  \cite[p. 238]{Kato} the operator $T+A$ is Fredholm, with
$ind(T+A) = ind(T)$.  

\begin{lem} \label{relcomp}
Suppose $\sigma \ge 1$, $L = p_N(z)D^N$ is $\Dreg $ of order $N \ge 1$, 
and $L_0 = \sum_{k=0}^{N-1} p_k(z) D^k $ has coefficients analytic on $\overline{\disk }$.
If $\dop _0$ acts by $L_0$ and $\dop = \dop _{max}$ acts by $L$ on $\Hardy _{\beta }^N$, then $\dop _0$ is relatively compact with respect to $\dop $.
\end{lem}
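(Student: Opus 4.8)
The plan is to verify the defining property of relative compactness directly: given a sequence $\{ u_n \}$ that is bounded in $\Hardy _{\beta }$ and for which $\{ \dop u_n \} = \{ p_N(z) D^N u_n \}$ is also bounded in $\Hardy _{\beta }$, I must extract a subsequence along which $\{ \dop _0 u_n \}$ converges. By \thmref{dregops} (applicable since $\sigma \ge 1$ and $L = p_N(z)D^N$ is $\Dreg$ of order $N$) the relevant domain is $\dom _{max} = \Hardy _{\beta }^N$, so the real content is to upgrade the two scalar bounds, on $\| u_n \| _{\beta }$ and on $\| p_N(z) D^N u_n \| _{\beta }$, into a uniform bound on the full Sobolev norm $\| u_n \| _N$, and then to exploit compactness of the embedding $\Hardy _{\beta }^N \hookrightarrow \Hardy _{\beta }^{N-1}$.

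First I would establish $\| u_n \| _N \le C$ uniformly. Since $L$ is $\Dreg$, factor the leading coefficient as $p_N(z) = r(z) q(z)$, where $r(z) = (z-z_1) \cdots (z-z_K)$ collects the zeros of $p_N$ lying in $\disk $ and $q(z)$ is analytic and nonvanishing on $\overline{\disk }$. Because $\sigma \ge 1$ forces $C_{\beta } = 1$, \propref{bndmult} shows multiplication by the analytic function $1/q(z)$ is bounded on $\Hardy _{\beta }$; applied to $p_N(z) D^N u_n$ this gives a uniform bound on $\| r(z) D^N u_n \| _{\beta }$. Now \thmref{plowbnd}, taken with $j = N$ and $k = 0$, yields $\| D^N u_n \| _{\beta } \le C_1 \| r(z) D^N u_n \| _{\beta } + C_2 \| u_n \| _{\beta }$, so $\| D^N u_n \| _{\beta }$ is bounded. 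Combining this with the bound on $\| u_n \| _{\beta }$ and the asymptotic $\beta _{n-N}/\beta _n \to 1$ exactly as in \lemref{donly} controls $\sum _n n^{2N} |c_n|^2 \beta _n^2$, and hence bounds $\| u_n \| _N$.

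Next I would invoke \lemref{Sobest}: the unit ball of $\Hardy _{\beta }^N$ has compact closure in $\Hardy _{\beta }^{N-1}$. Since $\{ u_n \}$ is bounded in $\Hardy _{\beta }^N$, some subsequence $\{ u_{n_j} \}$ converges in $\Hardy _{\beta }^{N-1}$. It then remains to check that $\dop _0$ maps $\Hardy _{\beta }^{N-1}$ boundedly into $\Hardy _{\beta }$. For each $k \le N-1$ the differentiation $D^k$ sends $\Hardy _{\beta }^{N-1}$ boundedly into $\Hardy _{\beta }$ (the factor $n(n-1)\cdots(n-k+1) \sim n^k$ is dominated by the weight $n^{N-1}$, and $\beta _{n-k}/\beta _n \to 1$), while multiplication by the coefficient $p_k(z)$, analytic on $\overline{\disk }$, is bounded on $\Hardy _{\beta }$ by \propref{bndmult} since $C_{\beta } = 1$. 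Hence $L_0 = \sum _{k=0}^{N-1} p_k(z) D^k$ is bounded from $\Hardy _{\beta }^{N-1}$ to $\Hardy _{\beta }$, so $\{ \dop _0 u_{n_j} \}$ converges in $\Hardy _{\beta }$, which is the required conclusion.

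The main obstacle is the first step, converting the two raw $\Hardy _{\beta }$ bounds into a genuine $\Hardy _{\beta }^N$ bound in the presence of interior zeros of $p_N$. This is precisely where the $\Dreg$ hypothesis (no zeros on $\partial \disk$) and the restriction $\sigma \ge 1$ (giving $C_{\beta } = 1$, so that dividing out the nonvanishing factor $q$ and applying the lower-order coefficients are both bounded operations on $\Hardy _{\beta }$) are indispensable; without them the lower bound of \thmref{plowbnd} or the boundedness of the relevant multiplication operators could fail, and the argument would break down.
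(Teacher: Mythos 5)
Your proposal is correct and follows essentially the same route as the paper: factor $p_N = rq$ with $q$ nonvanishing on $\overline{\disk}$, use \propref{bndmult} (with $C_{\beta}=1$ from $\sigma\ge 1$) to reduce to $p_N=r$, apply \thmref{plowbnd} to bound $\{u_n\}$ in $\Hardy_{\beta}^N$, and then use the compactness in \lemref{Sobest} to extract a subsequence on which the lower-order terms converge. The only difference is cosmetic — you route the last step through the compact embedding into $\Hardy_{\beta}^{N-1}$ followed by boundedness of $L_0$ there, while the paper extracts convergent subsequences of the $D^k u_m$ directly — and you supply slightly more detail in upgrading the bound on $\|D^N u_n\|_{\beta}$ to a bound on $\|u_n\|_N$.
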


\begin{proof}
Assume that $z_1, \dots ,z_K$ are the roots of $p_N(z)$ in $\disk $, listed with multiplicity.
As in \thmref{dregops}, $p_N(z)$ may be factored as $p_N(z) = r(z)q(z)$, with  $r(z) =  (z-z_1) \cdots (z-z_K)$,
and with $q(z)$ analytic with no zeros on $\overline{\disk }$.    
Since multiplication by $q(z)$ and $1/q(z)$ are bounded on $\Hardy _{\beta }$ by \propref{bndmult},
it suffices to assume that $p_N(z) = r(z)$.

Assume $u_m \in \Hardy _{\beta }^N$, and the sequences $\{ u_m \}$ and $\{ r(z)D^N u_m \}$ are bounded.
By \thmref{plowbnd} the sequence $u_m $ is bounded in $\Hardy _{\beta }^{N}$.  
For $k < N$ the terms $D^ku_m $ thus have subsequences which are convergent in $\Hardy _{\beta }$ by \lemref{Sobest},
and the same holds for $\{ \dop _0 u_m \}$. 

\end{proof}

\begin{thm} \label{Fred}
Suppose $\sigma \ge 1$, and $L = \sum_{k=0}^N p_k(z) D^k $ is $\Dreg $ of order $N \ge 1$.
If $p_N(z)$ has $K$ roots in $\disk$, counted with multiplicity, then $\dop $ with domain $\Hardy _{\beta }^N$ is Fredholm with index 
$N-K$.
\end{thm}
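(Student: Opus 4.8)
The plan is to strip $L$ down to its leading term by a relatively compact perturbation, factor out the part of the leading coefficient that does not vanish in $\disk$, and then deform the surviving operator to the explicit model $z^KD^N$, whose index is computed by hand. To begin, write $L = p_N(z)D^N + L_0$ with $L_0 = \sum_{k=0}^{N-1}p_k(z)D^k$, and let $\dop _N$ be the operator acting by $p_N(z)D^N$ on $\Hardy _{\beta }^N$. By \thmref{dregops} this operator is closed with $\dom _{max} = \Hardy _{\beta }^N$, and by \lemref{relcomp} the operator $\dop _0$ acting by $L_0$ is relatively compact with respect to $\dop _N$. Since $\dop = \dop _N + \dop _0$, the stability theorem for relatively compact perturbations \cite[p. 238]{Kato} shows that $\dop $ is Fredholm exactly when $\dop _N$ is, with $ind(\dop ) = ind(\dop _N)$; thus it suffices to compute $ind(\dop _N)$.

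Next I would remove the nonvanishing part of the leading coefficient. Factor $p_N(z) = r(z)q(z)$, where $r(z) = \prod_{m=1}^K(z-z_m)$ collects the $K$ roots in $\disk $ and $q(z)$ is analytic and nonvanishing on $\overline{\disk }$, exactly as in \thmref{dregops}. Because $\sigma \ge 1$ gives $C_{\beta } = 1$, \propref{bndmult} makes both $M_q$ and $M_{1/q}$ bounded on $\Hardy _{\beta }$, so $M_q$ is an isomorphism of $\Hardy _{\beta }$. Since $\dop _N = M_q\circ T$ with $Tf = r(z)D^Nf$, the operators $\dop _N$ and $T$ have identical kernels and ranges that differ by the isomorphism $M_q$, hence the same index. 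It remains to prove $ind(T) = N-K$.

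The deformation step is where I expect the real work to lie. For $t \in [0,1]$ set $r_t(z) = \prod_{m=1}^K(z-(1-t)z_m)$, so that $r_0 = r$ and $r_1 = z^K$, and let $T_tf = r_t(z)D^Nf$ on $\Hardy _{\beta }^N$. Each $r_t$ has all of its roots inside $\disk $, hence off $\partial \disk $, so \thmref{plowbnd} (with $j=N$, $k=0$) gives $\| D^Nf \| _{\beta } \le C_1\| T_tf \| _{\beta } + C_2\| f\| _{\beta }$; together with the compactness of the embedding $\Hardy _{\beta }^N \hookrightarrow \Hardy _{\beta }$ supplied by \lemref{Sobest}, this a priori estimate forces each $T_t$ to have finite dimensional kernel and closed range, i.e. to be semi-Fredholm. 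The family $t \mapsto T_t$ is continuous in the operator norm on $\Hardy _{\beta }^N$, since $\| T_t - T_s \| \le \| M_{r_t-r_s}\|\,\| D^N\| _{\Hardy _{\beta }^N \to \Hardy _{\beta }}$ and, with $C_{\beta } = 1$, the bound of \propref{bndmult} controls $\| M_{r_t-r_s}\|$ by the sum of the moduli of the coefficients of $r_t - r_s$, which vary continuously with $t$. Kato's stability theorem for the index of semi-Fredholm operators \cite[p. 235-238]{Kato} then keeps $ind(T_t)$ constant along this connected path; the main obstacle is precisely marshalling the semi-Fredholm estimate and the norm continuity so that this invariance applies.

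Finally I would evaluate the model $T_1 = z^KD^N$. The map $D^N:\Hardy _{\beta }^N \to \Hardy _{\beta }$ is surjective: termwise antidifferentiation sends $g = \sum_{n}b_nz^n$ to the power series $\sum_{n}b_n z^{n+N}/[(n+1)\cdots(n+N)]$, whose $N$-th derivative is $g$ and which lies in $\Hardy _{\beta }^N$ by \lemref{donly}; its kernel is the space of polynomials of degree at most $N-1$, of dimension $N$. Since $M_{z^K}$ is injective with range $z^K\Hardy _{\beta }$ of codimension $K$ (complement spanned by $1,z,\dots ,z^{K-1}$), the operator $T_1$ has $N$-dimensional kernel and range of codimension $K$, so $ind(T_1) = N-K$ and $T_1$ is genuinely Fredholm. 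Propagating this finite index back along the path and tracing the equalities yields $ind(\dop ) = ind(\dop _N) = ind(T) = ind(T_1) = N-K$.
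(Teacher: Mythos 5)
Your proof is correct, and its first half --- removing the lower-order terms via \lemref{relcomp} and Kato's stability theorem for relatively compact perturbations, then factoring $p_N = rq$ and discarding the invertible factor $M_q$ --- is exactly the paper's reduction. Where you genuinely diverge is in computing the index of $r(z)D^N$. The paper does this statically: the kernel is visibly the polynomials of degree less than $N$; closedness of the range is extracted from \lemref{switch} together with the closedness of $\dop _{max}$; and the codimension is pinned at exactly $K$ by exhibiting the $K$ point-evaluation functionals $f\mapsto f^{(l)}(z_j)$ (bounded by \propref{bndpe}) that annihilate the range, plus a polynomial-division and density argument for the reverse inequality. You instead deform $r$ to $z^K$ through polynomials whose roots stay inside $\disk$, establish semi-Fredholmness along the path from the a priori estimate of \thmref{plowbnd} combined with the compact embedding of \lemref{Sobest}, and invoke homotopy invariance of the index, finishing with the transparent model $z^KD^N$. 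Both routes are sound: the paper's argument is more self-contained and identifies the cokernel explicitly (which is information of independent interest), while yours leans on Kato's semi-Fredholm stability machinery but replaces the codimension bookkeeping with a one-line model computation --- and it is very much in the spirit of the deformation argument the paper itself deploys later in \thmref{eval2}. One point worth making explicit in your write-up: the index of the closed unbounded operator $T_t$ on $\Hardy _{\beta }$ coincides with that of the bounded operator $\Hardy _{\beta }^N\to \Hardy _{\beta }$ because the a priori estimate shows the graph norm of $T_t$ is equivalent to the $\Hardy _{\beta }^N$ norm; alternatively, apply Kato's stability theorem for relatively bounded perturbations directly, since $\|(T_t-T_s)f\|_{\beta } \le \|M_{r_t-r_s}\|\,(C_1\|T_sf\|_{\beta } + C_2\|f\|_{\beta })$ with $\|M_{r_t-r_s}\|\to 0$ as $s \to t$.
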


\begin{proof}
By \lemref{relcomp} it suffices to prove the result when $L = p_N(z)D^N$. 
The null space of $\dop $, being the polynomials with degree at most $N-1$, has dimension $N$.

For $j = 1,\dots ,J$, let $z_j$ be the distinct roots of $p_N(z)$ in $\disk $ with multiplicities $M_j$.
Factor $p_N(z) = r(z)q(z)$ with $r(z) = (z-z_1)^{M_1}\cdots (z-z_J)^{M_J}$.
Since multiplication by $q(z)$ and $q^{-1}(z)$ are bounded operators on $\Hardy _{\beta }$,
it suffices to assume that $p_N(z) = r(z)$.

To establish that the range of $L = r(z)D^N$ is closed, suppose $\{ f_m \}$ is a sequence in $\Hardy _{\beta }^N$,
$h_m = r(z)D^Nf_m$, and the sequence $\{ h_m \}$ converges to $h$ in $\Hardy _{\beta}$.
By \lemref{switch} there is a sequence $\{ g_m \}$ in $\Hardy _{\beta }^N$ such that $h_m = D^N g_m$.
The first $N$ terms of the power series for $g$ may be discarded, 
\[g_m = \sum_{k=0}^{\infty} a_kz^k, \quad \widetilde{g_m} = \sum_{k=N}^{\infty} a_kz^k, \]
giving $h_m = D^N \widetilde{g_m}$.  The sequence $\{ \widetilde{g_m} \}$ converges in $\Hardy _{\beta }^N$,
and $h$ is in the range of $\dop _{max}$ since $\Dreg$ operators are closed by \thmref{dregops}. 

If $h$ is in the range of $\dop $, then $h$ has a zero of order at least $M_j$ at $z_j$.  Using \propref{bndpe}, the $K$ functionals given by $f^{(l)}(z_j)$ for $l = 0,\dots ,M_j-1$ 
are independent continuous linear functionals on $\Hardy _{\beta }$. 
By the Riesz representation theorem there are $K$ independent elements $h_1,\dots ,h_K$ of $\Hardy _{\beta }$
orthogonal to the range of $p(z)D^N$, which thus has codimension at least $K$.

If $r(z)$ is any polynomial, then $r(z) = p(z)s(z) + t(z)$ where $s(z)$ and $t(z)$ are polynomials and 
${\rm deg} \ t(z) < K$.  (For instance $t(z)$ could be in the span of the Lagrange basis for the roots of $p$).
Since the polynomials are dense in $\Hardy _{\beta }$, the polynomials $p(z)s(z)$ are dense in the range of $\dop $.
Thus the codimension of the range is at most $K$.
\end{proof}

The assumption that $\sigma \ge 1$ may be dropped if $p_N(z)$ is a polynomial.  
The simplified proofs of \lemref{relcomp} and \thmref{Fred} are omitted.

\begin{thm} \label{Fredp}
Suppose $L = \sum_{k=0}^N p_k(z) D^k $ is $\Dreg $ of order $N \ge 1$.
If $p_N(z)$ is a polynomial with $K$ roots in $\disk$, counted with multiplicity, then $\dop $ with domain $\Hardy _{\beta }^N$ is Fredholm with index 
$N-K$.
\end{thm}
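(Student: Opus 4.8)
The plan is to run the proofs of \lemref{relcomp} and \thmref{Fred} essentially unchanged, replacing each appeal to the restriction $\sigma \ge 1$ (equivalently $C_\beta = 1$) by an argument valid for every $\sigma > 0$ that exploits the polynomial nature of $p_N$. The only places where $\sigma \ge 1$ entered were the boundedness, via \propref{bndmult}, of multiplication by the lower-order coefficients and by $1/q(z)$; by contrast \thmref{plowbnd}, \lemref{switch}, and \lemref{Sobest} are all insensitive to the value of $\sigma$. The substitution rests on the single spectral fact that, since $\lim_n \beta_{n+1}/\beta_n = 1$, the shift $M_z$ has spectrum exactly $\overline{\disk}$ on $\Hardy_\beta$ for all $\sigma > 0$.

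First I would reduce to $L = p_N(z)D^N$. Factor the polynomial as $p_N(z) = r(z)q(z)$, where $r(z) = (z-z_1)^{M_1}\cdots(z-z_J)^{M_J}$ collects the roots in $\disk$ and $q(z)$ has all its roots outside $\overline{\disk}$ (none lie on the circle, as $L$ is $\Dreg$). Because $q$ does not vanish on $\sigma(M_z) = \overline{\disk}$, the operator $M_q = q(M_z)$ is invertible on $\Hardy_\beta$, so $M_{1/q} = M_q^{-1}$ is bounded for every $\sigma > 0$; this is exactly what \propref{bndmult} supplied only when $C_\beta = 1$. With $M_{1/q}$ bounded, the reduction to $p_N = r$ proceeds verbatim. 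For the relative compactness of the lower-order part $\dop_0$, \thmref{plowbnd} (applied to the polynomial $r$) bounds $\{u_m\}$ in $\Hardy_\beta^N$, and \lemref{Sobest} extracts a subsequence along which each $D^k u_m$ converges in $\Hardy_\beta$; since each $p_k$ is analytic on a neighborhood of $\overline{\disk}=\sigma(M_z)$ and $(\lambda - M_z)^{-1} = M_{1/(\lambda-z)}$ for $|\lambda|>1$, the Cauchy integral gives $M_{p_k} = p_k(M_z)$ bounded, so $\{\dop_0 u_m\}$ converges. By \cite[p. 238]{Kato} the perturbation $\dop_0$ leaves the index unchanged, and it suffices to treat $L = r(z)D^N$.

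For $L = r(z)D^N$ I would compute the index as in \thmref{Fred}. The null space is the polynomials of degree at most $N-1$, of dimension $N$. Closedness of the range follows from \lemref{switch} (write $r(z)D^N f_m = D^N g_m$, discard the first $N$ Taylor coefficients, and pass to the limit, using that $\Dreg$ operators with polynomial coefficients are closed by \thmref{dregopsp}). The codimension equals $K = \sum_j M_j$: the continuous functionals $f \mapsto f^{(l)}(z_j)$, $0 \le l < M_j$, furnished by \propref{bndpe}, show it is at least $K$, while polynomial division by $r$ together with density of polynomials shows it is at most $K$. Hence the index is $N-K$. (A slicker route avoiding division altogether writes $L = M_{p_N}\circ D^N$: the map $D^N\colon \Hardy_\beta^N \to \Hardy_\beta$ is Fredholm of index $N$, while $M_{p_N}$ is Fredholm of index $-K$, since each factor $M_{z-z_j}$ with $|z_j|<1$ has index $-1$ and each factor with $|z_j|>1$ is invertible; additivity of the index gives $N-K$.)

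The main obstacle is precisely the step that \propref{bndmult} cannot reach once $\sigma < 1$: boundedness of $M_{1/q}$ and of the analytic multipliers $M_{p_k}$. The naive Cauchy--Schwarz bounds on the Taylor coefficients of $1/q$, or of $h/(z-w)$, are inconclusive when $1 < |w| \le C_\beta$, where \propref{bndmult} fails outright, so one cannot avoid invoking the spectral identity $\sigma(M_z) = \overline{\disk}$ (valid for all $\sigma > 0$ because $\beta_{n+1}/\beta_n \to 1$). Once that identity is in hand, every multiplier holomorphic on a neighborhood of $\overline{\disk}$---in particular $1/q$ and each $p_k$---is bounded by the functional calculus, and the whole argument of \thmref{Fred} transfers without the hypothesis $\sigma \ge 1$.
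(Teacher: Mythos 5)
Your proof is correct, and since the paper only remarks that ``the simplified proofs of Lemma~\ref{relcomp} and Theorem~\ref{Fred} are omitted,'' the skeleton you follow --- reduce to $p_N(z)D^N$ by relative compactness of the lower-order part, then kernel of dimension $N$, closed range via Lemma~\ref{switch}, and codimension $K$ via point evaluations plus polynomial division --- is exactly the intended one. Where you genuinely depart from the paper's toolkit is in how you restore the multiplier bounds once $\sigma<1$: instead of Proposition~\ref{bndmult} (which needs $C_\beta=1$ and is indeed inconclusive when the radius of analyticity of $1/q$ or $p_k$ is smaller than $C_\beta=1/\sqrt{\sigma}$), you observe that $\beta_{n+1}/\beta_n\to 1$ forces the spectral radius of $M_z$ to be $1$, so the Riesz--Dunford calculus/Neumann series makes every multiplier holomorphic on a neighborhood of $\overline{\disk}$ bounded, and in particular makes $M_q$ invertible. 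This is a real addition rather than a cosmetic one: the paper's likely simplification is that Theorem~\ref{plowbnd} applies directly to a polynomial $p_N$ with roots on both sides of the circle (so no factorization is needed for the lower bound and for relative compactness), but the codimension-at-most-$K$ step still divides by the degree-$K$ factor $r(z)$ and therefore still needs $M_{1/q}$ bounded, and the lower-order coefficients $p_k$, $k<N$, are only assumed analytic on $\overline{\disk}$, not polynomial; your spectral-radius argument covers both points uniformly in $\sigma$, so your write-up actually documents a step the paper's one-sentence proof leaves implicit. The only quibbles are minor: the assertion $\sigma(M_z)=\overline{\disk}$ needs the full statement $\lim_k\bigl(\sup_n\beta_{n+k}/\beta_n\bigr)^{1/k}=1$ rather than just $\beta_{n+1}/\beta_n\to 1$ verbatim (an easy consequence here, and containment in $\overline{\disk}$ is all you use), and the parenthetical ``slicker route'' via index additivity leaves the Fredholmness of the individual factors $M_{z-z_j}$, $|z_j|<1$, unproved --- but that aside is not load-bearing.
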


\section{Eigenvalues}
 
Consider the first order expression $L = a_2(z-z_1)(z-z_2)D + \sigma a_2z + b_0$, where
$a_2 \not= 0$, $|z_1| < 1$, $|z_2| > 1$.  Elementary computations show that 
the eigenvalues $\lambda _n$ of the maximal operator $\dop $ are 
\[\lambda _n = b_0 + a_2[\sigma z_1 - (z_2 - z_1)n], \quad n=0,1,2,\dots ,\] 
with eigenfunctions 
\[y_n(z) = C \Bigl ( \frac{z - z_1}{z - z_2} \Bigr )^{n} (z - z_2)^{-\sigma } .\]
Except for the restriction to $n \ge 0$, $\{ \lambda _n \}$ is similar to an eigenvalue sequence 
for periodic eigenfunctions of a first order periodic expression on $\real $. 

The connection linking expressions on $\Hardy _{\beta }$ and periodic problems on $\real $ will be made explicit.
These efforts start by reinterpreting $L$ as an expression on $\real $ with periodic coeffiicients.
Introduce the Hilbert space $L^2_{per}$ of $2\pi $ periodic functions 
which are (Lebesgue) square integrable on $[0,2\pi ]$ with inner product
$\langle f,g \rangle = \int_0^{2\pi} f(\theta ) \overline{g(\theta )} d \theta $.
When $L$ is $\Dreg$ and formally symmetric, the periodic eigenvalue problem is a perturbation of an eigenvalue problem
for a self-adjoint operator on $L^2_{per}$.

\subsection{Periodic expressions on $\real $}

Rather than considering $\Hardy _{\beta }$ as a space of functions analytic on $\disk$, another interpretation in available.
Begin with the complex vector space of trigonometric polynomials $f:\real \to \complex $ having the form
$f(\theta ) = \sum_{n=0}^{\infty } c_ne^{in\theta }$, with only finitely many nonzero coefficients $c_n$.
As before, define the inner product
\[\langle f_1,f_2 \rangle _{\beta} = \sum_{n=0}^{\infty} b_n \overline{c_n} \beta _n^2,\]
for polynomials $f_1 = \sum_{n=0}^{\infty } b_n\exp(in\theta )$ and $f_2 = \sum_{n=0}^{\infty } c_n\exp (in \theta )$.
The completion of this inner product space is a Hilbert space, denoted $\Hardy _{\beta ,\real }$.
The map $\Hardy _{\beta ,\real } \to \Hardy _{\beta }$ given by
\[f(\theta ) \to f(z) = \sum_{n=0}^{\infty } c_nz^n ,\]
is an isometric bijection of Hilbert spaces. 
(Although function notation is used, elements of $\Hardy _{\beta ,\real }$ are typically periodic distributions on $\real $.)

This mapping takes the expression $iz \frac{d}{dz} $ from $\Hardy _{\beta}$ to $\frac{d}{d \theta }$ on $\Hardy _{\beta ,\real }$.
The more general expressions  $L = \sum_{k=0}^N p_k(z)D^k$ become $L_{per} = \sum_{k=0}^N p_k(e^{i\theta })(-ie^{-i\theta } \frac{d}{d\theta })^k$.
This reinterpretation of differential expressions is particularly useful for locating the eigenvalues 
of $\Dreg $ formally symmetric expressions, whose eigenfunctions become $2\pi$-periodic for $L_{per}$.  
Eigenvalue estimates can also be developed for certain nonselfadjoint operators on $\Hardy _{\beta }$.
  
\corref{symroots} showed that the highest order term of a formally symmetric expression on $\Hardy _{\beta }$ has the form
\[p_N(z)D^N = \Bigl ( a_Nz^N + \sum_{j=0}^{N-1} [a_jz^j +  \overline{a_j}z^{2N-j}]\Bigr ) D^N , \quad a_N = \overline{a_N}.\]
Using the polar form $a_j = |a_j|e^{i\phi _j}$, the corresponding expression on $\real $ is
\[ \Bigl ( a_Ne^{iN\theta }  + \sum_{j=0}^{N-1} [a_je^{ij \theta } +  \overline{a_j}e^{i(2N-j)\theta}]\Bigr) (-i e^{-i\theta }\frac{d}{d\theta })^N .\]
Moving the derivatives to the right and displaying the highest order terms gives 
\[(-i)^NP_N(\theta )\frac{d^N}{d\theta ^N} + \dots = (-i)^N\Bigl ( a_N  + \sum_{j=0}^{N-1} 2|a_j|\cos ([N-j]\theta -\phi _j )  \Bigr) \frac{d^N}{d\theta ^N}) + \dots .\] 
Note that $P_N(\theta )$  is a real-valued function.

The periodic expressions $L_{per}$ coming from formally symmetric expressions $L$ on $\Hardy _{\beta}$ are typically not
formally symmetric on $L^2_{per}$.  In the first order case, formally symmetric expressions on $\Hardy _{\beta}$ have the form 
\[L  = [a_2z^2 + a_1z +  \overline{a_2}]D + \sigma a_2z + b_0, \quad  a_1 = \overline{a_1}, \quad b_0 = \overline{b_0}. \]
With $a_2= |a_2|e^{i\phi}$, the corresponding periodic expressions are
\[L_{per} = -2i [\frac{a_1}{2} + |a_2|\cos(\theta+\phi  ) ]  \frac{d}{d \theta } + \sigma a_2e^{i\theta } + b_0 . \]
The term  $\sigma a_2e^{i\theta }$ is typically not real-valued.  Also note that the leading coefficient may have zeros.  
A simple second order example starts with
\[ L = c_1(zD)^2 +  c_2zD^2 + \overline{c_2}  (z^2D + \sigma z )zD, \quad c_1 = \overline{c_1} . \]
With $c_2 = |c_2|e^{i\phi}$, the corresponding periodic expression is
\[L_{per} = [-c_1 - 2|c_2|\cos (\theta - \phi ) ]\frac{d^2}{d \theta ^2} + i (c_2e^{-i\theta } -   \overline{c_2} \sigma e^{i\theta } ) \frac{d}{d \theta }  .\]

\subsection{$\Dreg$ expressions}

Eigenvalue estimates for perturbations of self-adjoint or normal operators often depend on estimates
for the separation of the eigenvalues of the unperturbed operator.  In anticipation of such an argument,
an elementary number theoretic result is needed.

\begin{lem} \label{numstuff}
Suppose $\tau > 0$, $C \in \complex$, $N \ge 2$ is an integer, and 
\[ \gamma _n = (n /\tau +C)^N ,  \quad n = 0,\pm 1,\pm 2,\dots .\]

There is a $K > 0$ such that for all $n$ with $|n|$ sufficiently large, 
\begin{equation}  \label{gamdif}
| \gamma _n - \gamma _m | \ge Kn^{N-1}, \quad \gamma _m  \not= \gamma _n. 
\end{equation}
If $2\tau C $ is an integer, $N$ is even, and $|n|$ is sufficiently large, then there is a unique $m \not= n$ with $\gamma _m = \gamma _n$.
If  $|n|$ is sufficiently large and either $2\tau C $ is not an integer or $N$ is odd, 
there is no $m \not= n$ with $\gamma _m = \gamma _n$.

\end{lem}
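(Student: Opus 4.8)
The plan is to work with the points $w_n = n/\tau + C$, so that $\gamma_n = w_n^N$, and to base everything on the elementary identity
\[ |w_n|^2 - |w_m|^2 = \tau^{-2}(n-m)\bigl(n+m+2\tau\operatorname{Re}C\bigr). \]
Since $|w_m/w_n|=1$ is necessary for $\gamma_m=\gamma_n$, for $m\neq n$ equal moduli force $n+m+2\tau\operatorname{Re}C=0$, i.e. $m=m^\ast:=-n-2\tau\operatorname{Re}C$; then $w_{m^\ast}=-\overline{w_n}$ and hence $\gamma_{m^\ast}=(-1)^N\overline{\gamma_n}$. This single computation drives both the coincidence count and the separation estimate, and it shows that (for $|n|$ large, so that $m^\ast\neq n$) the only possible partner of $n$ is $m^\ast$.

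For the coincidence statements, I would note that $m^\ast$ is an integer exactly when $2\tau\operatorname{Re}C\in\Z$, and that, given $m=m^\ast$, the equality $\gamma_{m^\ast}=\gamma_n$ holds iff $(-1)^N\overline{\gamma_n}=\gamma_n$. Because $\arg w_n\to 0$ (as $n\to+\infty$) or $\to\pi$ (as $n\to-\infty$), for $|n|$ large $N\arg w_n$ lies near a multiple of $\pi$, so $\gamma_n=|w_n|^N e^{iN\arg w_n}$ is never purely imaginary and is real precisely when $\operatorname{Im}C=0$. Thus: if $N$ is odd the relation $-\overline{\gamma_n}=\gamma_n$ fails; if $N$ is even it forces $\gamma_n$ real, i.e. $\operatorname{Im}C=0$, whence $2\tau\operatorname{Re}C=2\tau C$. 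This gives the last two assertions — a unique partner $m=m^\ast\neq n$ exists for large $|n|$ exactly when $N$ is even and $2\tau C\in\Z$, and otherwise none.

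For the separation \eqref{gamdif}, fix large $|n|$ and $m$ with $\gamma_m\neq\gamma_n$. If $|w_m|\ge 2|w_n|$ then $|\gamma_n-\gamma_m|\ge |w_m|^N-|w_n|^N\ge \tfrac12|w_m|^N$, which is large, so assume $|w_m|\le 2|w_n|$. From $|\gamma_n-\gamma_m|\ge \bigl||w_n|^N-|w_m|^N\bigr|\ge \bigl||w_n|-|w_m|\bigr|\,|w_n|^{N-1}$ together with the identity rewritten as $|w_n|^2-|w_m|^2=\tau^{-2}(n-m)(m-m^\ast)$, the bound $|\gamma_n-\gamma_m|\gtrsim |n|^{N-1}$ follows whenever $|n-m|\,|m-m^\ast|\gtrsim |n|$. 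This leaves only the two resonant regions where $m$ is within a bounded distance of $n$ or of $m^\ast$. Near $n$, expanding $w_n^N-w_m^N=(n-m)\tau^{-1}\sum_{j=0}^{N-1}w_n^{N-1-j}w_m^{\,j}$ gives a sum of $N$ nearly equal terms $\sim N w_n^{N-1}$, so the difference is $\gtrsim|n|^{N-1}$ because $|n-m|\ge 1$.

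The main obstacle is the resonant region near $m^\ast$ with $N$ even, where $\gamma_m\approx\overline{\gamma_n}$ and the crude modulus bound degenerates. Here I would write $w_m=-\overline{w_n}+(m-m^\ast)/\tau$ and split $\gamma_n-\gamma_m=(\gamma_n-\overline{\gamma_n})+(\overline{\gamma_n}-\gamma_m)$. The first term is $2i\operatorname{Im}\gamma_n=2i|w_n|^N\sin(N\arg w_n)$, of order exactly $|n|^{N-1}$ when $\operatorname{Im}C\neq 0$ (since $\arg w_n\sim \tau\operatorname{Im}C/n$); the second, $\overline{\gamma_n}-\gamma_m=w_{m^\ast}^N-w_m^N\sim -N w_{m^\ast}^{N-1}(m-m^\ast)/\tau$, is to leading order real of order $|n|^{N-1}$. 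The crucial point is that these two contributions occupy the imaginary and real directions respectively and so cannot cancel below order $|n|^{N-1}$: if $\operatorname{Im}C\neq 0$ the imaginary part alone forces $|\gamma_n-\gamma_m|\gtrsim|n|^{N-1}$, while if $\operatorname{Im}C=0$ the whole expression collapses to $w_n^N-(w_n-(m-m^\ast)/\tau)^N\sim N w_n^{N-1}(m-m^\ast)/\tau$, where $|m-m^\ast|$ is bounded below by a positive constant precisely because $\gamma_m\neq\gamma_n$ excludes $m=m^\ast$ (when $m^\ast\in\Z$) and otherwise $\operatorname{dist}(2\tau C,\Z)>0$. Carefully controlling the error terms in these expansions is the bulk of the work; taking $K$ to be the minimum of the finitely many constants produced across the cases then establishes \eqref{gamdif}.
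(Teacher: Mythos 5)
Your argument is sound and reaches all three conclusions, but the mechanism in the hardest case differs from the paper's. The paper also reduces to the differences $(n+C_1)^N-(m+C_1)^N$ with $C_1=\tau C$, disposes of $\bigl||n+C_1|-|m+C_1|\bigr|\ge 1$ by comparing moduli, and then treats the two bounded resonant windows $m=n+k$ and $m=-n+k$ via the factorization $x^N-y^N=(x-y)\sum_j x^jy^{N-1-j}$; for the window $m=-n+k$ it computes the degree-$(N-1)$ coefficient of $\gamma_n-\gamma_{-n+k}$ as a polynomial in $n$ and shows it is a nonzero multiple of $k+2C_1$, which handles real and non-real $C_1$ uniformly. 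You instead identify the unique possible partner exactly, via $|w_n|^2-|w_m|^2=\tau^{-2}(n-m)(m-m^*)$ and $w_{m^*}=-\overline{w_n}$, and then in the window near $m^*$ you separate $\gamma_n-\gamma_m$ into a purely imaginary piece $2i\operatorname{Im}\gamma_n\asymp|n|^{N-1}$ and an (asymptotically) real piece, splitting on whether $\operatorname{Im}C$ vanishes. Your identification of the partner is cleaner and more quantitative than the paper's monotonicity argument (it also makes the coincidence count immediate), while the paper's single leading-coefficient computation avoids your case split on $\operatorname{Im}C$; the two are of comparable length and rigor.

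One subcase falls through your case analysis as written: $N$ odd with $m$ within bounded distance of $m^*$. There your modulus bound degenerates (the moduli $|w_n|,|w_m|$ nearly agree), and your resonant analysis is framed only for $N$ even. The fix is one line: for $N$ odd, $\gamma_m\approx\gamma_{m^*}=-\overline{\gamma_n}$, so $|\gamma_n-\gamma_m|\approx|2\operatorname{Re}\gamma_n|\asymp|n|^N$, which is far stronger than \eqref{gamdif}. With that added, and the error terms in your expansions controlled as you indicate, the proof is complete.
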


\begin{proof}

The difference $\gamma _n - \gamma _m$ can be written  
\[ \tau ^{-N}[ (n + C_1)^N - (m + C_1)^N], \quad C_1 = \tau C,\]
so it suffices to consider the differences $(n + C_1)^N - (m + C_1)^N$.
Notice that $2\tau C $ is an integer exactly when $2C_1 $ is an integer.

When $|n|$ is large, $|(n+ C_1)^N|$ is a strictly increasing function of $|n|$.
If $N$ is even and $2C_1 $ is an integer, then $\gamma _n = \gamma _m$ for $m  = -n - 2C_1$.
The monotonicity means there is a unique $m \not= n$ with $\gamma _m = \gamma _n$.
Also,  
\[ \lim_{n \to +\infty } \frac{\gamma _{n+1} - \gamma _n}{n^{N-1}} 
=  \lim_{n \to +\infty } \frac{1}{n^{N-1}} \Bigl [ (n+1)^N\frac{(n+1+C_1)^N}{(n+1)^N} - n^N\frac{(n+C_1)^N}{n^N}  \Bigr ]  \] 
\[ =  \lim_{n \to +\infty } \frac{1}{n^{N-1}} \Bigl [ Nn^{N-1} + NC_1(n+1)^{N-1} - NC_1n^{N-1}  \Bigr ] = N,  \] 
establishing \eqref{gamdif} when $N$ is even and $2C_1 $ is an integer.
Simple modifications of this argument also establish the lemma if $N$ is odd.

To handle the cases when $N$ is even and $2C_1 $ is not an integer,
assume now that $z =n+C_1$, $w = m+C_1$, and $|n|$ is large enough that $|n+C_1| \ge (|n|+1)/2$.
If in addition $\big | |z| - |w| \big | \ge 1$, then
\[ |z^N - w^N| \ge \Big | |z|^N - |w|^N \Big | = \Big | \int_{|w|}^{|z|} Nx^{N-1} \ dx \Big |  \ge N(\frac{|n|-1}{2})^{N-1} .\]
The remaining values of $m,n$ to consider must satisfy   
$ -1 <  |n+C_1| - |m+C_1|  < 1$, implying $-1-2|C_1| < |n| - |m| < 1 + 2|C_1|$.
In other words, there is a constant $K$ such that \eqref{gamdif} is satisfied except possibly for 
\[ m =  n - K , \dots ,  n + K, \quad {\rm or} \quad m = -n - K,\dots ,-n +K .\]

Use the identity
\[x^N - y^N = (x-y)\sum_{j=0}^{N-1} x^jy^{N-1-j}, \]
to get 
\[ \gamma _n - \gamma _m =  (n + C_1)^N - (m + C_1)^N
= (n-m) \sum_{j=0}^{N-1} (n+C_1)^j(m+C_1)^{N-1-j} .\]
When $m = n+k$ with $k \not= 0$,
\[ \gamma _n - \gamma _m =  k \sum_{j=0}^{N-1} (n+C_1)^j(n+k+C_1)^{N-1-j} .\]
The sum is a polynomial of degree $N-1$ in $n$ with highest order term $kNn^{N-1}$.

Next take $m=-n+k$, where 
\[ \gamma _n - \gamma _m =  (2n-k) \sum_{j=0}^{N-1} (n+C_1)^j(-n+k+C_1)^{N-1-j} .\]
Since $N$ is even, $\gamma _n - \gamma _m$ is a polynomial in $n$ of degree at most $N-1$, with the degree $N-1$ term, 
\[ 2n[\sum_{j=0}^{N-1} (-1)^{N-1-j} jC_1 n^{N-2} +  \sum_{j=0}^{N-1} (-1)^{N-2-j}  (N-1-j)(k+C_1)n^{N-2} ] \]
\[ = 2n[\sum_{j=0}^{N-1} (-1)^{N-1-j} jC_1 n^{N-2} + (k+C_1)\sum_{j=0}^{N-1} (-1)^{N-1-j}  j n^{N-2} ] \]
\[ = 2( k + 2C_1) n^{N-1}\sum_{j=0}^{N-1} (-1)^{N-1-j}  j  = 2( k + 2C_1) n^{N-1}[(N-1) - (N-2)/2]. \]

Since $k+2C_1 \not= 0$, the inequality \eqref{gamdif} holds for large enough $|n|$ in each of the cases $m = n+k$ and $m=-n+k$ for $-K \le k \le K$,
completing the proof.

\end{proof}
 
The next theorem will provide detailed information about the eigenvalues of a class of $\Dreg$ maximal operators.
Motivated by the self-adjoint case, the main hypothesis describes well-behaved polynomial leading coefficients $p_N(z)$. 
By \corref{symroots}, if $L$ is a formally symmetric expression of order $N$, the nonzero roots $z_j$ of $p_N(z)$ are closed under the map $z_j \to 1/\overline{z_j}$.
If in addition $L$  is $\Dreg$ then the maximal operator 
must have Fredholm index $0$, so by \thmref{Fred} there must be $N$ roots $z_j \in \disk $.

Given $N \ge 1$, say that a polynomial $p_N(z)$ is $\Rsym$ if $p_N(z)$ has degree at most $2N$, and the following conditions are satisfied:

(i) $p_N(z)$ has $N$ roots $z_m \in \disk $, with roots $z_1,\dots , z_M $ not equal to zero and listed with multiplicity,
and $z = 0$ a root of multiplicity $N-M$.

(ii) the remaining roots of $p_N(z)$ are  $1/\overline{z_1},\dots , 1/\overline{z_M} $.

\noindent If $p_N(z)$ is $\Rsym$ then there is a nonzero constant $C_1 \in \complex$ such that
\begin{equation} \label{pNform}
p_N(z) = C_1\rho _1(z) \cdots \rho _M(z) z^{N-M},  \quad \rho _m(z) = \overline{z_m}(z-z_m)(z - 1/\overline{z_m}) .
\end{equation}
Note that if $z_m = |z_m|e^{i\phi _m}$, and $z = e^{i\theta }$ lies on the unit circle, then 
\[\rho _m(e^{i\theta }) =   |z_m| e^{-i \phi _m}e^{2i\theta } - (|z_m|^2 + 1)e^{i\theta } + |z_m|e^{i\phi _m} \]
\[ =  e^{i\theta }[2|z_m| \cos (\theta -\phi _m ) - (|z_m|^2 + 1)  ].\]
That is, $e^{-i\theta }\rho _m(e^{i\theta })$ is a real-valued nonvanishing function of $\theta \in \real $.
Similarly, $p_N(z)$ has no roots on the unit circle, and $C_1^{-1}e^{-iN\theta }p_N(e^{i\theta }) $ is real-valued for $\theta \in \real $.

\begin{thm} \label{eval1}
Suppose $L = \sum_{k=0}^N p_k(z)D^k$ is $\Dreg$ expression of order $N \ge 2$, whose leading coefficient $p_N(z)$ is $\Rsym$.
Let $\dop $ be the corresponding maximal operator.

For any $\epsilon $ satisfying $0 < \epsilon< 1$, each eigenvalue $\lambda $ is an element of a sequence  
$\{ \mu _n, n = 0 , \pm 1, \pm 2, \dots   \}$ having the form
\[\mu _n/C_1 =   ( n /\tau +C_2)^N  + O(n^{N-2 + \epsilon} ),\]
with constants $C_1\not= 0$, $\tau > 0$, and $C_2$.
The eigenvalues $\lambda $ of $\dop $ have algebraic multiplicity at most $2$ if $|\lambda |$ is sufficiently large.

\end{thm}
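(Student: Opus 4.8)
The plan is to constrain the eigenvalues by passing to the periodic picture on $\real$ and analyzing the resulting regular periodic equation, and then to obtain the multiplicity bound by an operator deformation carried out inside $\Hardy _{\beta }$. First I would use the isometry $\Hardy _{\beta ,\real } \to \Hardy _{\beta }$ to replace $\dop y = \lambda y$ by $L_{per}y = \lambda y$. Every eigenfunction of $\dop $ is analytic in $\disk $ by \propref{bndpe}, and since $p_N(z)\not= 0$ on the unit circle the equation is regular there, so the eigenfunction extends analytically across $\partial \disk $ and its boundary values furnish a nonzero $2\pi$-periodic solution of the ordinary differential equation $L_{per}y = \lambda y$. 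Consequently the eigenvalues of $\dop $ are contained in the set of $\lambda $ admitting a periodic solution, a set independent of the weight $\beta $. Because $p_N$ is \Rsym, the computations following \corref{symroots} give the leading coefficient of $L_{per}$ as $(-i)^N C_1 P_N(\theta)\,d^N/d\theta ^N$ with $P_N(\theta)$ real-valued and nowhere vanishing, placing the equation in the regular periodic case.

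This regularity is precisely what permits the Liouville change of variable $s = s(\theta)$ with $s'(\theta) = c\,P_N(\theta)^{-1/N}$, where $c$ is chosen so that $s$ advances by $2\pi$ over one period; the total length $\int_0^{2\pi} P_N(\theta)^{-1/N}\,d\theta$ then fixes the scaling parameter $\tau $. In the $s$ variable the expression becomes $A_0 + A_1$, where $A_0$ is a constant multiple of $(d/ds)^N$ and $A_1$ has order at most $N-1$ with smooth bounded coefficients. The constant-coefficient model $A_0$, whose Floquet multiplier equals $1$ only for a discrete set of $\lambda $, supplies candidate eigenvalues $\gamma _n = C_1(n/\tau + C_2)^N$, with $C_2$ absorbing the averaged contribution of the order-$(N-1)$ part of $A_1$ (which produces the $n^{N-1}$ correction captured inside the $N$-th power).

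Now \lemref{numstuff} applies directly to the $\gamma _n$: for $|n|$ large they are separated by at least $Kn^{N-1}$, and are either simple or, when $N$ is even and $2\tau C_2$ is an integer, paired as $\gamma _n = \gamma _{-n-2\tau C_2}$. A perturbation estimate for the monodromy of $A_0 + A_1$ then shows that every $\lambda $ with a periodic solution lies within $O(n^{N-2+\epsilon})$ of some $\gamma _n$: the order-$(N-1)$ part has already been absorbed into $C_2$, while its fluctuating part together with the terms of order at most $N-2$ shift a candidate by only $O(n^{N-2+\epsilon})$ after one averaging step, the $\epsilon $ loss arising from the interpolation inequality of \lemref{Sobest} used to control the variable coefficients. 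This yields the sequence $\mu _n$ with $\mu _n/C_1 = (n/\tau + C_2)^N + O(n^{N-2+\epsilon})$ and shows each eigenvalue of $\dop $ is one of the $\mu _n$. I expect this uniform-in-$n$ estimate — which must beat the gap $n^{N-1}$ while pinning the remainder at $n^{N-2+\epsilon}$, and which requires $0<\epsilon <1$ and $N\ge 2$ so that $n^{N-2+\epsilon}\ll n^{N-1}$ — to be the main obstacle.

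Finally, for the algebraic multiplicity I would work inside $\Hardy _{\beta }$. Fix a formally symmetric $\Dreg$ reference operator $\dop _{sym}$ with the same leading coefficient $p_N$ (available by \thmref{symmchar}, self adjoint by \thmref{regsa}) and deform $\dop _t = \dop _{sym} + t(\dop - \dop _{sym})$, $t \in [0,1]$, where $\dop - \dop _{sym}$ has order at most $N-1$. The eigenvalues of $\dop _{sym}$ are real with algebraic multiplicity equal to geometric multiplicity and are localized near the same $\gamma _n$, so the generalized eigenspace trapped inside a fixed contour $\Gamma _n$ around $\gamma _n$ (and its partner, when paired) has dimension at most $2$ at $t=0$. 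Since the lower-order deformation moves eigenvalues by only $O(n^{N-2+\epsilon})$ while the gaps are $\gtrsim n^{N-1}$, for $|n|$ large $\Gamma _n$ stays in the resolvent set of every $\dop _t$, so the Riesz projection
\[ P_n(t) = \frac{1}{2\pi i}\oint_{\Gamma _n}(\dop _t - z)^{-1}\,dz \]
has rank independent of $t$. Hence the total algebraic multiplicity of the eigenvalues of $\dop = \dop _1$ inside $\Gamma _n$ is at most $2$, giving the stated bound once $|\lambda |$ is large. The delicate point here, as before, is the uniform resolvent control needed to keep each $\Gamma _n$ in the resolvent set for all $t$.
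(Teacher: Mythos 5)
Your reduction to the periodic problem, the Liouville change of variable, the absorption of the averaged order-$(N-1)$ coefficient into $C_2$, and the appeal to \lemref{numstuff} for the separation $|\gamma _n - \gamma _m| \ge Kn^{N-1}$ all match the paper's argument. One correction in that half: the $\epsilon$ loss does not come from \lemref{Sobest}, which plays no role here; it comes from choosing the enclosing circles $S_n$ of radius $n^{N-2+\epsilon}$, so that the remaining order-$(N-2)$ perturbation composed with the resolvent of the constant-coefficient model operator has norm $O(n^{N-2}/n^{N-2+\epsilon}) = O(n^{-\epsilon})$, which is what makes the Neumann series converge. The paper carries this out as an explicit resolvent estimate on $L^2_{per}$ rather than as a monodromy perturbation, but the content is the same.

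The multiplicity argument is where your proposal breaks down, in two places. First, the deformation $\dop _t = \dop _{sym} + t(\dop - \dop _{sym})$ changes the order-$(N-1)$ coefficient, and an order-$(N-1)$ perturbation moves the eigenvalues by $O(n^{N-1})$: it shifts the constant $C_2$, and $(n/\tau + C_2)^N - (n/\tau + C_2')^N \sim N(C_2-C_2')(n/\tau )^{N-1}$, which is comparable to the gaps, not $O(n^{N-2+\epsilon})$. So a fixed contour $\Gamma _n$ will in general not stay in the resolvent set of every $\dop _t$, and the rank-constancy argument fails as stated. This is exactly why the paper conjugates by the periodic gauge factor $r(t)$ to remove the order-$(N-1)$ term before perturbing; only after that does the perturbation have order at most $N-2$ and the resolvent estimate go through. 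Second, your claim that the generalized eigenspace of $\dop _{sym}$ inside $\Gamma _n$ has dimension at most $2$ is unsupported: self-adjointness gives equality of algebraic and geometric multiplicity, but the geometric multiplicity of an order-$N$ problem can a priori be as large as $N$, and localization near the $\gamma _n$ does not by itself prevent several eigenvalues from landing in one $S_n$. The bound $2$ has to come from comparison with a model operator whose eigenspaces are explicitly one-dimensional --- in the paper, the normal operator $\dop _0 = (-i\tau ^{-1}d/dt + C_2)^N$ on $L^2_{per}$ with eigenfunctions $e^{int}$, whose eigenvalues coincide at most in pairs by \lemref{numstuff} --- combined with the Riesz-projection estimate $\| {\cal P}_{r,n} - {\cal P}_{0,n} \| = O(n^{-\epsilon})$. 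In effect your multiplicity step presupposes for $\dop _{sym}$ the very localization and multiplicity information that the theorem is meant to establish, and which the paper only obtains for the symmetric case afterwards, in \thmref{eval2}, using the present theorem as input. The repair is to run the multiplicity comparison in $L^2_{per}$ against $\dop _0$ after the gauge transformation, as the paper does.
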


\begin{proof}

Since multiplication of $\dop $ by a constant $C_1$ simply multiplies eigenvalues by $C_1$,
it suffices to assume that $ p_N(z)$ has the form \eqref{pNform} with $C_1 = 1$.
The change of variables $z = e^{i\theta }$ with $\theta \in \real $ changes
$L$ to $L_{per} = \sum_{k=0}^N p_k(e^{i\theta })(-ie^{-i\theta } \frac{d}{d\theta })^k$,
with $L_{per}$ acting on $L^2_{per}$. 
If $\psi (z)$ is an eigenfunction of $\dop $, then $\psi (e^{i\theta })$ is a $2\pi$-periodic eigenfunction for $L_{per}$.
(The same remark applies to generalized eigenfunctions $\psi $ satisfying an equation $(\dop - \lambda I)^j\psi = 0$.)
The proof will proceed by showing that the sequence $\{ \mu _n \}$ describes 
the larger set of eigenvalues of $2\pi$-periodic eigenfunctions for $L_{per}$.

The leading coefficient $L_{per}$ is $(-i)^NP_N(\theta ) = (-i)^Ne^{-iN\theta }p_N(e^{i\theta })$.  As noted above, 
$P_N(\theta )$ is real-valued with no zeros for $\theta \in \real  $.  Absorbing the sign in $C_1$ if necessary,
assume that $P_N(\theta ) > 0$.  Conventional reductions \cite[p. 308-9]{CL} are available.  First 
use the change of real variables
\[t = \tau ^{-1}\int_0^{\theta } P_N(s)^{-1/N} \ ds , \quad \tau = \frac{1}{2\pi} \int_0^{2\pi } P_N(s)^{-1/N} \ ds ,\]
which carries $2\pi$-periodic functions of $\theta $ to $2\pi$-periodic functions of $t$.
Since $P_N(\theta )^{1/N}\ d/d \theta  = \tau ^{-1} d/dt$, the new expression has the form
\[L_t = (-i \tau ^{-1} d/dt)^N + \sum_{k=0}^{N-1} \tilde{p}_k(t)( -i\tau ^{-1}d/dt)^k,\] 
with $2\pi$-periodic coefficients $\tilde{p}_k(t)$. 

Next, let 
\[r(t) = \exp (\int_0^t - i\tau C + i\tau \frac{\tilde{p}_{N-1}(s)}{N} \ ds ), \quad C = \frac{1}{2\pi } \int_0^{2\pi }  \frac{\tilde{p}_{N-1}(s)}{N} \ ds .\]
The function $r(t)$ is periodic with period $2\pi $, and conjugation with $r(t)$ leaves an expression
\[L_{r} = r^{-1}(t)L_t r(t) = (-i \tau ^{-1}d/dt + C_2)^N +  \sum_{k=0}^{N-2} q_k(t)(-i\tau ^{-1} d/dt)^k, \quad C_2 = C/N .\]
The coefficients $q_k(t)$ are also periodic with period $2\pi $.  The eigenvalues of $2\pi $-periodic eigenfunctions 
for $L_{r}$ are the same as those for $L_{per}$.

The expression $(-i\tau ^{-1}d/dt )$ has $2\pi $-periodic eigenfunctions $\exp(i n t )$ with eigenvalues
$ n /\tau $, $n = 0,\pm 1, \pm 2, \dots $.  These eigenfunctions form an orthogonal basis for $L^2_{per}$.  Let $\dop _0$ denote the normal operator 
given by the expression $(-i \tau ^{-1}d/dt +C_2)^N $ with these eigenfunctions.  Let $\dop _p$ be given by the expression
$L_p =  \sum_{k=0}^{N-2} q_k(t)(-i \tau ^{-1}d/dt)^k$ on the domain of $\dop _0$, and take $\dop _{r} = \dop_0 + \dop _p$.
The eigenvalues of $\dop _0$ are $ \gamma _n = (n /\tau +C_2)^N $ whose behavior is described in \lemref{numstuff}. 
Suppose $0 < \epsilon < 1$, and $S_n$ is the circle $S_n = \{ \zeta \in \complex \ | \ |\zeta - \gamma _n | =  n^{N-2+\epsilon }\} $.
Then for $|n|$ sufficiently large, there is a $K > 0$ such that  
$|\zeta - \gamma _m | \ge Kn^{N-1}$ for all $\zeta \in S_n$ and $\gamma _m \not= \gamma _n$.

Let $R_r(\lambda ) = (\dop _{r} - \lambda I)^{-1}$ be the resolvent of $\dop _r$, and let $R_0(\lambda )$ be 
the resolvent of $\dop _0$.  Consider the formula 
\begin{equation} \label{reseqn}
(\dop _{r} - \lambda I)^{-1} = (\dop _0 + \dop _p - \lambda I)^{-1} = (\dop _0 -\lambda I )^{-1}(I +  \dop _p(\dop _0 - \lambda I)^{-1})^{-1} 
\end{equation}
with 
\[\dop _p(\dop _0 - \lambda I)^{-1} = \sum_{k=0}^{N-2} q_k (-i \tau ^{-1}d/dt)^k (\dop _0 - \lambda I)^{-1}.\]
Use an expansion $f = \sum_{m = -\infty }^{\infty} b_m\exp (imt)$ for $f \in L^2_{per}$ to compute 
\begin{equation} \label{rescalc}
 (-i \tau ^{-1} d/dt)^k (\dop _0 - \lambda I)^{-1} f =  \sum_{m= - \infty }^{\infty} b_m \frac{(m/\tau )^k }{\gamma _m - \lambda  } \exp (imt).
 \end{equation}
If $\lambda \in S_n$ and $ 0 \le k \le N-2$, then $\|(-i \tau ^{-1} d/dt)^k (\dop _0 - \lambda I)^{-1} \| = O(n^{-\epsilon }) $ as $|n| \to \infty $.
Similarly, since multiplication by $q_k$ is a bounded operator on $L^2_{per}$, $\| \dop _p(\dop _0 - \lambda I)^{-1} \| = O(n^{-\epsilon }) $  as $|n| \to \infty $ if $\lambda \in S_n$

Since $\dop _0$ is normal  \cite[p. 277]{Kato}, $\| R_0 (\lambda ) \| = \max_{\gamma _n} d(\lambda , \gamma _n)^{-1} $.
By \eqref{reseqn} and the calculation \eqref{rescalc}, if $| \lambda | $ is sufficiently large, then $\lambda $ is in the resolvent set for $\dop _r$ unless $\lambda $ lies inside some disk bounded by an $S_n$.
For $|n|$ sufficiently large and $\lambda \in S_n$,  \eqref{reseqn} gives
$\| R_r(\lambda ) - R_0(\lambda ) \| = O(n^{-N + 2 - 2 \epsilon}) $.
Recall \cite[ p. 67]{Kato} that the difference of the $\dop _r$ eigenprojections ${\cal P}_{r,n}$ and the $\dop _0$ eigenprojections ${\cal P}_{0,n}$ for eigenvalues inside $S_n$ is given by
\[ {\cal P}_{r,n} - {\cal P}_{0,n} =   -\frac{1}{2\pi i} \int_{S_n} R_r(\lambda ) - R_0(\lambda ) \ d \lambda .\] 
Since $\|  {\cal P}_{r,n} - {\cal P}_{0,n}  \| = O(n^{-\epsilon })$, 
the algebraic multiplicity of the eigenvalues of $\dop _r$ contained in $S_n$ is the same as for $\dop _0$, which is at most $2$, when
$|n|$ is large.  

\end{proof}

\thmref{eval1} does not distinguish between the eigenvalues coming from eigenfunctions of $L$ on $\Hardy _{\beta }$ and the larger set of eigenvalues
from $L^2_{per}$.  By working in $\Hardy _{\beta }$, the results of \thmref{eval1} can be refined.  The next lemma uses the expressions $B_{n,r}$ of 
\thmref{symmchar}.

\begin{lem} \label{symex}
Suppose $N \ge 1$ and the polynomial $p_N(z)$ is $\Rsym$.
For each $\sigma > 0$ there is a constant $C \not= 0$ and a formally symmetric expression 
\[L_0 = \sum_{r=0}^N [c_{N,r}B_{N,r} + \overline{c_{N,r}}B_{N,r}^+] \]
of order $N$ on $\Hardy _{\beta }$ whose leading coefficient is $Cp_N(z)$.
\end{lem}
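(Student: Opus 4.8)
The plan is to reduce the statement to a coefficient-matching problem governed by the conjugate-reciprocal symmetry of $p_N(z)$. By \lemref{bsym} the highest order term of $c_{N,r}B_{N,r}+\overline{c_{N,r}}B_{N,r}^+$ is $(c_{N,r}z^{N-r}+\overline{c_{N,r}}z^{N+r})D^N$, and since each $B_{N,r}$ has order exactly $N$, the leading coefficient of the candidate $L_0=\sum_{r=0}^N[c_{N,r}B_{N,r}+\overline{c_{N,r}}B_{N,r}^+]$ is $\sum_{r=0}^N(c_{N,r}z^{N-r}+\overline{c_{N,r}}z^{N+r})$. As $r$ ranges over $0,\dots ,N$ and the $c_{N,r}$ over $\complex$, these range over precisely the polynomials $\sum_{j=0}^{2N}a_jz^j$ with $a_{2N-j}=\overline{a_j}$ and $a_N$ real that are described in \corref{symroots}. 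So it suffices to show that $Cp_N(z)$ has this conjugate-reciprocal form for a suitable $C\neq 0$, and then read off the $c_{N,r}$.

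First I would verify the reciprocity of the factors in \eqref{pNform}. A direct expansion gives $z^2\overline{\rho_m(1/\overline{z})}=\rho_m(z)$ for each $m$; this is the polynomial incarnation of the already-noted fact that $e^{-i\theta}\rho_m(e^{i\theta})$ is real-valued. Since $p_N=C_1z^{N-M}\rho_1(z)\cdots\rho_M(z)$ has degree $N+M$, forming $z^{2N}\overline{p_N(1/\overline{z})}$ contributes $z^{-2}$ from each of the $M$ quadratic factors and $z^{-(N-M)}$ from the monomial, and the residual power $z^{2N-(N+M)}=z^{N-M}$ restores the monomial factor, yielding $z^{2N}\overline{p_N(1/\overline{z})}=(\overline{C_1}/C_1)\,p_N(z)$. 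In the notation of \thmref{adjchar} this says the transform $q_N$ of $p_N$ is the scalar multiple $(\overline{C_1}/C_1)p_N$.

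Next I would normalize. Choosing $C=1/C_1$ (so that $CC_1=1$ is real) makes $\widetilde{p}_N=Cp_N=z^{N-M}\rho_1(z)\cdots\rho_M(z)$ satisfy $z^{2N}\overline{\widetilde{p}_N(1/\overline{z})}=\widetilde{p}_N(z)$; writing $\widetilde{p}_N(z)=\sum_{j=0}^{2N}a_jz^j$, this is exactly $a_{2N-j}=\overline{a_j}$ with $a_N$ real, i.e. the \corref{symroots} form. I then solve the matching equations: setting $c_{N,r}=a_{N-r}$ for $1\le r\le N$ produces the coefficients of $z^{j}$ for $0\le j\le N-1$ and, because $\overline{a_{N-r}}=a_{N+r}$, simultaneously the coefficients of $z^{N+r}$; taking $c_{N,0}=a_N/2$ (real) produces the $z^N$ coefficient. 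With these $c_{N,r}$ the expression $L_0$ is formally symmetric by \lemref{bsym} and has leading coefficient $Cp_N(z)$, and since $Cp_N\not\equiv 0$ it has order $N$. Because the values $c_{N,r}$ and $C$ depend only on $p_N$ and not on $\sigma$, while the $\sigma$-dependence of $B_{N,r}^+$ affects only lower order terms, the same choice works for every $\sigma>0$.

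The one genuine point, as opposed to bookkeeping, is the identity $z^{2N}\overline{p_N(1/\overline{z})}=(\overline{C_1}/C_1)p_N(z)$: this is where the geometric $\Rsym$ hypothesis (roots closed under $z\mapsto 1/\overline{z}$ with the stated multiplicities, including the order $N-M$ zero at the origin balanced against degree $2N$) is converted into the algebraic conjugate-reciprocal symmetry of the coefficients. Once that identity and the correct normalizing constant $C$ are in hand, the remaining coefficient matching is immediate.
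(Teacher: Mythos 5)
Your proposal is correct, and its overall architecture matches the paper's: both reduce the lemma to showing that a suitable scalar multiple of $p_N(z)=C_1\rho_1(z)\cdots\rho_M(z)z^{N-M}$ has the conjugate-reciprocal coefficient pattern $a_{2N-j}=\overline{a_j}$, $a_N\in\real$, and then invoke \lemref{bsym} to realize any such polynomial as the leading coefficient of $\sum_r[c_{N,r}B_{N,r}+\overline{c_{N,r}}B_{N,r}^+]$. Where you differ is in how that coefficient symmetry is established. The paper proves it by induction on the number of quadratic factors, explicitly multiplying $\rho_{M+1}(z)$ against a polynomial already in symmetric form and checking that the coefficients of $z^j$ and $z^{2(M+1)-j}$ remain complex conjugates; you instead verify the single factor identity $z^2\overline{\rho_m(1/\overline{z})}=\rho_m(z)$ and multiply these to get $z^{2N}\overline{p_N(1/\overline{z})}=(\overline{C_1}/C_1)p_N(z)$, after which the normalization $C=1/C_1$ immediately yields the coefficient relations. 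Your route is shorter and more conceptual --- it makes visible that the algebraic symmetry is exactly the reciprocal-polynomial functional equation already implicit in \thmref{adjchar} and in the remark that $e^{-i\theta}\rho_m(e^{i\theta})$ is real --- while the paper's induction has the minor advantage of producing the coefficients $c_j$ explicitly along the way. You also spell out the matching $c_{N,0}=a_N/2$, $c_{N,r}=a_{N-r}$ for $r\ge 1$, which the paper leaves implicit in its citation of \lemref{bsym} and \thmref{symmchar}; and your closing observation that $C$ and the $c_{N,r}$ are independent of $\sigma$ (the $\sigma$-dependence of $B_{N,r}^+$ entering only in lower order terms) correctly accounts for the ``for each $\sigma>0$'' clause. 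No gaps.
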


\begin{proof}
As in \eqref{pNform}, the polynomial $p_N(z)$ has the form
\[p_N(z) = C_1\rho _1(z) \cdots \rho _M(z) z^{N-M}, \]
with each factor $\rho _m(z)$ of the form
\begin{equation} \label{rhoform}
\rho _m(z) = \overline{z_m}(z-z_m)(z - 1/\overline{z_m}) =  \overline{z_m}z^2 - (|z_m|^2 + 1)z + z_m .
\end{equation}

An induction proof will show that if $q(z) = \rho _1(z) \cdots \rho _M(z) $, then
\[q(z) = c_Mz^M + \sum_{j=0}^{M-1} [c_jz^j + \overline{c_j} z^{2M-j}], \quad c_M = \overline{c_M},\]
with the case of one factor established in \eqref{rhoform}.  Suppose the result holds for $q(z)$ with $M$ factors.
Then
\[ \rho _{M+1}(z) q(z) = [\overline{z_{M+1}}z^2 - (|z_{M+1}|^2 + 1)z + z_{M+1} ]\Bigl [c_Mz^M + \sum_{j=0}^{M-1} [c_jz^j + \overline{c_j} z^{2M-j}] \Bigr ].\]
After writng the polynomial  $\rho _{M+1}(z) q(z)$ in standard form, the coefficient of $z^{M+1}$ is $\overline{z_{M+1}}c_{M-1} - (|z_{M+1}|^2 + 1)c_M + z_{M+1}\overline{c_{M-1}} $, which is real.
For $j < M+1$ the coefficient of $z^{j}$ is $\overline{z_{M+1}}c_{j-2} - (|z_{M+1}|^2 + 1)c_{j-1} + z_{M+1}c_{j} $,
while the coefficient of $z^{2(M+1)-j}$ is $\overline{z_{M+1}}\overline{c_{j}} - (|z_{M+1}|^2 + 1)\overline{c_{j-1}} + z_{M+1}\overline{c_{j-2}} $,
preserving the desired symmetry.

Finally, the coefficients of $p_N(z) = \rho _1(z) \cdots \rho _M(z) z^{N-M}$ are obtained from those of  $\rho _1(z) \cdots \rho _M(z) $ by an index shift.
By \lemref{bsym} and \thmref{symmchar}, there is a  formally symmetric expression 
\begin{equation} \label{stanform}
L_0= \sum_{r=0}^N [c_{N,r}B_{N,r} + \overline{c_{N,r}}B_{N,r}^+] 
\end{equation}
of order $N$ on $\Hardy _{\beta }$ whose leading coefficient is $\rho _1(z) \cdots \rho _M(z) z^{N-M}$.
\end{proof}

Recall that if $L = \sum_{k=0}^N p_k(z)D^k$ is a $\Dreg$ formally symmetric expression of order $N \ge 1$ on $ \Hardy _{\beta }$,
then by \thmref{dregopsp} the domain of the maximal operator $\dop $ is $\Hardy _{\beta }^N$, and $\dop $ is self-adjoint
with compact resolvent by \thmref{regsa}.

\begin{thm} \label{eval2}
Suppose $L = \sum_{k=0}^N p_k(z)D^k$ is a $\Dreg$ formally symmetric expression of order $N \ge 2$ on $\Hardy _{\beta }$,
with self-adjoint maximal operator $\dop $.  The eigenvalues of $\dop $ can be enumerated as a sequence $\{ \lambda _n, n = 0,1,2,\dots \}$
with the following description: for $0 < \epsilon< 1$ there are real nonzero constants $C_1$ and $\tau > 0$, and a $C_2 \in \complex $ such that   
\[\lambda _n/C_1 =   ( n /\tau +C_2)^N  + O(n^{N-2 + \epsilon} ).\]
In particular,  with at most finitely many exceptions, the eigenvalues are either all positive or all negative, and have 
multiplicity $1$.  
\end{thm}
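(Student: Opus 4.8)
The plan is to promote \thmref{eval1} by exploiting the analytic (one-sided) structure of $\Hardy_{\beta}$, isolating the eigenvalues that arise from genuine eigenfunctions on $\disk$ via an operator deformation argument.

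First I would verify that \thmref{eval1} applies, i.e.\ that the leading coefficient $p_N(z)$ is $\Rsym$. Because $L$ is formally symmetric and $\Dreg$, the maximal operator $\dop$ is self adjoint with compact resolvent by \thmref{regsa}, hence Fredholm of index $0$; so \thmref{Fred} forces $p_N(z)$ to have exactly $N$ roots in $\disk$, counted with multiplicity. By \corref{symroots} the nonzero roots are invariant under $z \mapsto 1/\overline{z}$, which pairs each root in $\disk$ with one outside $\overline{\disk}$, and together with $\deg p_N \le 2N$ this is precisely the $\Rsym$ condition \eqref{pNform}. Moreover \corref{symroots} shows that for formally symmetric $L$ the function $e^{-iN\theta}p_N(e^{i\theta})$ is already real valued, so the constant $C_1$ of \thmref{eval1} is real. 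Thus \thmref{eval1} produces a two-sided sequence $\{\mu_n, n \in \Z\}$ with $\mu_n/C_1 = (n/\tau + C_2)^N + O(n^{N-2+\epsilon})$, real $C_1$ and $\tau$, containing every eigenvalue of $\dop$, each of algebraic multiplicity at most $2$ for $|\lambda|$ large.

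The essential new point is that on $\Hardy_{\beta}$ only one half of this two-sided sequence is realized. To capture this I would compare $\dop$ with the reference operator given by $(zD)^N$, which is formally symmetric and $\Dreg$ with leading coefficient $z^N$ (all roots at the origin, hence $\Rsym$), and whose eigenfunctions are exactly $z^n$ with eigenvalues $n^N$ for $n = 0,1,2,\dots$; these are manifestly one-sided, positive, and simple for $n \ge 1$. I would then connect the two through a family of $\Dreg$ expressions $\dop_s$, $s \in [0,1]$, with $\dop_0 = \dop$ and $\dop_1$ given by $(zD)^N$: move each root $z_m$ inside $\disk$ to the origin along $z_m(s) = (1-s)z_m$ and its partner $1/\overline{z_m}$ off to infinity, keeping $p_N$ in the $\Rsym$ class with no zero on $\partial\disk$ throughout, while interpolating the lower-order coefficients. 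Each $\dop_s$ then inherits the spectral picture of \thmref{eval1}, and the resolvent estimates on the circles $S_n = \{\, |\zeta - \gamma_n^{(s)}| = n^{N-2+\epsilon}\,\}$ hold uniformly in $s$, since \lemref{numstuff} supplies separations $\gtrsim n^{N-1}$ uniformly and the lower-order terms remain relatively compact by \lemref{relcomp}. Because no eigenvalue of $\dop_s$ can then cross any $S_n$, the algebraic-multiplicity count of eigenvalues enclosed by each $S_n$ is locally constant in $s$. For $(zD)^N$ this count is $1$ for each large $n>0$ and $0$ for each $n<0$, so the same holds for $\dop$. This simultaneously gives simplicity of the large eigenvalues, their occurrence only at positive index, and—after discarding the finitely many eigenvalues inside the bounded region where the estimates fail—a relabeling as a one-sided sequence $\{\lambda_n, n \ge 0\}$ with $\lambda_n/C_1 = (n/\tau + C_2)^N + O(n^{N-2+\epsilon})$. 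Since $n/\tau + C_2 \to +\infty$ and $C_1$ is real, $(n/\tau+C_2)^N$ has constant sign for large $n$ whether $N$ is even or odd, so all but finitely many eigenvalues carry the sign of $C_1$.

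The main obstacle is making the deformation rigorous: I must keep the whole family $L_s$ in the $\Dreg$, $\Rsym$ class (no roots on $\partial\disk$ for any $s$) and, more delicately, secure the resolvent bounds of \thmref{eval1} uniformly in $s$, so that the enclosed-eigenvalue counts are genuinely locally constant and the circles $S_n$ track the $n$-th eigenvalue consistently across the family; this requires the constants $C_1^{(s)}, \tau^{(s)}, C_2^{(s)}$ and the relative-compactness estimates to vary continuously and stay bounded. As a fallback I would argue directly on $\Hardy_{\beta}$, where eigenfunctions correspond to nonnegative-frequency boundary functions whose winding number is nonnegative, ruling out the approximate eigenfunctions behaving like $e^{int}$ for large negative $n$ and thereby forcing the one-sidedness by hand.
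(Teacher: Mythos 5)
Your proposal follows essentially the same route as the paper: after checking that the leading coefficient is \Rsym\ (via \corref{symroots} and the index-zero consequence of \thmref{Fred}), the paper also deforms $L$ through a continuous family of formally symmetric $\Dreg$ expressions to $\pm(zD)^N$ (using \lemref{symex} to realize the deformed leading coefficients $\rho_1(t,z)\cdots\rho_M(t,z)z^{N-M}$ symmetrically), and transfers the eigenvalue count inside the circles $S_n$ of \thmref{eval1} by continuity of the resolvent and a finite subcover of $[0,1]$. Your deformation of the roots and the use of the one-sided reference spectrum $n^N$, $n\ge 0$, to force simplicity and one-sidedness is exactly the paper's argument.
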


\begin{proof}

Let $z_1,\dots ,z_M$ be the nonzero roots of the leading coefficient $p_N(z)$ inside $\disk $. 
By \corref{symroots}, there are $M$ factors $\rho _m(z)$ as in \eqref{rhoform} such that
$p_N(z) = C_1\rho _1(z) \cdots \rho _M(z) z^{N-M}$.    Without loss of generality we may take $C_1 =1$.
The main idea is to construct and study a one parameter family of symmetric $\Dreg$ differential expressions $L(t)$ connecting $L$ 
with the elementary operator $\pm (zD)^N$.

For $0 \le t \le 1$ and each $\rho _m(z)$, define a one parameter family of polynomials by 
\[\rho _{m}(t,z) = t\overline{z_m}(z-tz_m)(z-\frac{1}{t\overline{z_m}}) = t \overline{z_m}z^2 - ( | tz_m|^2 + 1)z + tz_m .\]
These polynomials maintain the form \eqref{rhoform}, deforming $\rho _m(z)$ to $-z$ while keeping exactly one root inside $\disk $.  Now define 
\[p_N(t,z) = \rho _1(t,z) \cdots \rho _M(t,z) z^{N-M}.\]
By \eqref{stanform} of \lemref{symex} there is a one parameter family of formally symmetric expressions $L_0(t)$ with continuously varying coefficients and 
leading coefficient $p_N(t,z) = \rho _1(t,z) \cdots \rho _M(t,z) z^{N-M}$.  The expression $L - L_0(1)$ has order at most $N-1$.
By defining $L(t) = L_0(t) +  t(L - L_0(1))$
we obtain a one parameter family of formally symmetric expressions with continuously varying coefficients, with $L(0) = \pm (zD)^N$ and $L(1) = L$.
Let $\dop (t)$ denote the self-adjoint maximal operator with expression $L(t)$.

Fix $0 \le t_0 \le 1$, and let $\dop _p(t) = \dop (t) - \dop (t_0)$.
By \thmref{regsa}, the resolvent $R _0(\lambda ) = (\dop (t_0) - \lambda I)^{-1}$ is bounded 
from $\Hardy _{\beta }$ to $\Hardy _{\beta }^N$ uniformly on compact subsets of the resolvent set of $\dop (t_0)$.  
The perturbation formula for the resolvent $R(\lambda ,t)$,
\[ R(\lambda ,t)  = (\dop (t_0) + \dop _p(t) - \lambda I)^{-1} = (\dop (t_0) -\lambda I )^{-1}(I + \dop _p(t)(\dop (t_0) - \lambda I)^{-1})^{-1} \]
shows that $R(\lambda ,t): \Hardy _{\beta } \to \Hardy _{\beta }$ is a continuous operator valued function of $t$.
In particular the eigenvalues of $\dop (t)$ vary continuously with $t$.

Taking advantage of \thmref{eval1}, for each $0 \le t \le 1$  there is a sequence $ \gamma _n(t) = (n /\tau (t) +C_2(t))^N $
and circles $S_n(t) = \{ \zeta \in \complex \ | \ |\zeta - \gamma _n(t) | = n^{N-2+\epsilon} \} $ which contain all eigenvalues $\lambda _n(t)$
and are pairwise disjoint for $n$ sufficiently large.  Since $\gamma _n(t)$, the eigenvalues $\lambda _n(t)$,
and the resolvents are continuous functions, there is a cover of $[0,1]$ by open intervals $I_t$ centered at $t$ such that
for $t_1 \in I_t$ all eigenvalues of $\dop (t_1)$ are contained in some $S_n(t)$, with $S_n(t)$ containing the same 
number of eigenvalues for $\dop (t)$ and $\dop (t_1)$ if $n$ is sufficiently large.
Taking a finite subcover, and using the fact that $\dop (0)$ has expression $(zD)^N$, with eigenvalues $n^N$ for $n=0,1,2,\dots $, 
it follows that for $n$ large there is one eigenvalue $\lambda _n$, counted with multiplicity, in $S_n(1)$.

\end{proof}

\bibliographystyle{amsalpha}

\end{document}